\documentclass[11pt]{article}
\usepackage{amsmath,amsthm,amsfonts,amssymb,bm,wasysym}
\usepackage{epsfig}
\usepackage[usenames]{color}
\usepackage{verbatim}
\usepackage{hyperref}
\usepackage{multicol}
\usepackage{comment}
\usepackage{float}
\usepackage{graphicx}
\usepackage[colorinlistoftodos]{todonotes}
\usepackage[normalem]{ulem}

\usepackage[utf8]{inputenc}

%\usepackage{ccfonts}
%\usepackage{euler}
%\usepackage{txfonts}
%\usepackage{mathptmx}
%\usepackage{cmbright}
%\usepackage{lucidabr}
%%\usepackage{fourier}

%\usepackage[english,greek]{babel}
%\usepackage[iso-8859-7]{inputenc}

%for soda
\topmargin 0in
\oddsidemargin .01in
\textwidth 6.5in
\textheight 9in
\evensidemargin 1in
\addtolength{\voffset}{-.6in}
\addtolength{\textheight}{0.22in}
\parskip \medskipamount
\parindent	0pt

%\oddsidemargin	0.635cm
%\textwidth	15.3cm
%\topmargin	-1cm
%\textheight	23cm
%\parindent	0pt
%\parskip 	\bigskipamount

%%%%%%%%%%%THEOREMS%%%%%%%%%%%
\newtheorem{theorem}{Theorem}[section]

\numberwithin{equation}{section}
\newtheorem{lemma}[theorem]{Lemma}
\newtheorem{proposition}[theorem]{Proposition}
\newtheorem{remark}[theorem]{Remark}

\numberwithin{equation}{section}

%%%%%%%%%%%LETTERS%%%%%%%%%%%
\def\N{\mathbb{N}}
\def\Z{\mathbb{Z}}

\def\R{\mathbb{R}}

\def\bP{\mathbb{P}}

\def\F{\mathcal{F}}

\def\B{\mathcal{B}}

\def\bP{\mathbb{P}}

\renewcommand{\phi}{\varphi}
\renewcommand{\epsilon}{\varepsilon}

\def\RR{\mathcal{R}}
\allowdisplaybreaks

%%%%%%%%%%%SYMBOLS & OPERATORS%%%%%%%%%%%
\newcommand{\1}{{\text{\Large $\mathfrak 1$}}}

\newcommand{\var}{\operatorname{var}}

\renewcommand{\emptyset}{\varnothing}

\newcommand{\til}{\widetilde}
\def\reff#1{(\ref{#1})}

\newcommand{\pr}[1]{\mathbb{P}\!\left(#1\right)}
\newcommand{\E}[1]{\mathbb{E}\!\left[#1\right]}
\newcommand{\estart}[2]{\mathbb{E}_{#2}\!\left[#1\right]}
\newcommand{\prstart}[2]{\mathbb{P}_{#2}\!\left(#1\right)}

\newcommand{\escond}[3]{\mathbb{E}_{#3}\!\left[#1\;\middle\vert\;#2\right]}

\newcommand{\norm}[1]{\left\| #1 \right\|}

\newcommand{\tn}{|\kern-.1em|\kern-0.1em|}

\newcommand{\cp}{\mathrm{Cap}}

\newcommand{\cc}[1]{\mathrm{Cap}\left(#1\right)}

% Martin's macros
\newcommand\be{\begin{equation}}
\newcommand\ee{\end{equation}}

\def\bP{\mathbb{P}}

\title{\bf Strong law of large numbers for the capacity of the Wiener sausage in dimension four}

\begin{document}
\author{Amine Asselah \thanks{
Universit\'e Paris-Est Cr\'eteil; amine.asselah@u-pec.fr} \and
Bruno Schapira\thanks{Aix-Marseille Universit\'e, CNRS, Centrale Marseille, I2M, UMR 7373, 13453 Marseille, France;  bruno.schapira@univ-amu.fr} \and Perla Sousi\thanks{University of Cambridge, Cambridge, UK;   p.sousi@statslab.cam.ac.uk} 
}
\date{}

\maketitle

\begin{abstract}
We prove a strong law of large numbers for the Newtonian capacity of a 
Wiener sausage in the critical dimension four.
%We obtain non-intersection estimates for two sausages started far apart. 
%We present a self-contained proof of the strong law of large numbers.
\newline
\newline
\emph{Keywords and phrases.} Capacity,  Wiener sausage, Law of large numbers.
\newline
MSC 2010 \emph{subject classifications.} Primary 60F05, 60G50.
\end{abstract}

\section{Introduction}

We denote by $(\beta_s,s\ge 0)$ a Brownian motion on $\R^4$, and for $r>0$ and $0\le s\le t< \infty$,
the Wiener sausage of radius $r$ in the time period $[s,t]$ is defined as
\begin{equation}
\label{wienersaus}
W_r[s,t]=\{z\in \R^4\ :\ \|z-\beta_u\|\le r \ \text{ for some }s\le u\le  t\}.
\end{equation}
Let $\bP_x$ and $\mathbb E_x$ be the law and expectation with
respect to the Brownian motion started at site $x$,
and let $G$ denote Green's function and $H_A$ denote
the hitting time of $A\subset \R^4$ by the Brownian motion.
The Newtonian capacity of a compact set $A\subset \R^4$ 
may be defined through hitting time as 
\begin{equation}
\label{cap.def}
\cp (A) = \lim_{\|x\|\to \infty}\ \frac{\prstart{H_A<+\infty}{x}}{G(x)}.
\end{equation}
A more classical definition through a variational expression reads
\[
\cp(A)=\Big( \inf\{\int\!\int G(x-y) d\mu(x)d\mu(y):\ \mu \ \
\text{prob. measure with support in $A$}\}\Big)^{-1}.
\]
Our central object is the capacity of the Wiener sausage, and
formula \reff{cap.def}, with $A=W_1[0,t]$, casts the problem
into an intersection event for two independent sausages.

Our main result is the following law of large number for the
capacity of the Wiener sausage. 
\begin{theorem}
\label{theo.cap.wiener}
In dimension four, for any radius $r>0$, almost surely and in $L^p$, 
for any $p\in [1,\infty)$, we have
\begin{equation}
\label{SLLN}
\lim_{t\to \infty} \ \frac{\log t}{t} \, \cp(W_r[0,t])   \ = \ 4\pi^2.
\end{equation}
\end{theorem}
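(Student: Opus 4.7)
The plan is to organize the proof into three steps: (i) the first-moment asymptotic $\mathbb{E}[\cp(W_r[0,t])] \sim 4\pi^2\,t/\log t$, (ii) a concentration statement around this mean, and (iii) upgrading convergence in probability to almost sure and $L^p$ convergence via subadditivity and monotonicity of capacity.

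For (i), I would apply definition \eqref{cap.def} with an independent Brownian motion $\widetilde\beta$, so that after taking expectation $\mathbb{E}[\cp(W_r[0,t])]$ becomes a two-path intersection probability in $\mathbb{R}^4$. Using $G(x) \sim \|x\|^{-2}$ and the classical asymptotics for Brownian intersections in the critical dimension, one extracts $(4\pi^2 + o(1))\,t/\log t$, identifying the constant in \eqref{SLLN}. This step is likely available either in the literature or as a companion result.

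For (ii), I would partition $[0,t]$ into $k = \lfloor \log t \rfloor$ blocks $I_1,\ldots,I_k$ of length $t/k$. Subadditivity of capacity yields the upper bound
\[
\cp(W_r[0,t]) \le S := \sum_{j=1}^k \cp(W_r[I_j]),
\]
whose right-hand side is a sum of $k$ i.i.d.\ copies of $\cp(W_r[0,t/k])$ by the strong Markov property and translation invariance of capacity. Each summand has mean $\sim 4\pi^2 t/(k\log t)$, so $\mathbb{E}[S] \sim \mathbb{E}[\cp(W_r[0,t])]$, which makes the upper bound sharp at leading order; a weak law of large numbers for $S$, together with a variance bound obtained inductively from the same block decomposition, gives the upper inequality $\cp(W_r[0,t]) \le (4\pi^2 + o(1))\,t/\log t$ in probability. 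For the matching lower bound I would exploit the variational/energy formulation of capacity: the test probability measure $\mu = \sum_j w_j \mu_j$, with $\mu_j$ the equilibrium probability measure of $W_r[I_j]$ and $w_j = \cp(W_r[I_j])/S$, produces a diagonal energy equal to $1/S$, while the off-diagonal part involves Green-kernel integrals between blocks separated in space at typical distance $\sqrt{(l-j)t/k}$, contributing only $O(\log\log t/t)$, which is negligible compared to $1/S \sim \log t/t$.

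For (iii), Chebyshev's inequality combined with the concentration in (ii) provides convergence in probability with an explicit polylogarithmic error. I would then choose a subsequence $t_n$ growing slowly enough for Borel-Cantelli to apply (e.g.\ $t_n = \exp(\sqrt n)$, so that $t_{n+1}/t_n \to 1$), obtaining almost sure convergence along $t_n$, and use monotonicity $\cp(W_r[0,s]) \le \cp(W_r[0,t])$ for $s \le t$ to bracket arbitrary $t$ between subsequence values. The $L^p$ convergence follows from uniform integrability, established via higher-moment bounds on the i.i.d.\ sum $S$. The main obstacle is the lower bound in step (ii): in the critical dimension four the Green function $G(x) \sim \|x\|^{-2}$ sits exactly at the integrability threshold against equilibrium measures, so the cross-energy corrections between blocks are only marginally smaller than the diagonal part, and extracting the precise constant $4\pi^2$ requires sharp geometric control on how distinct portions of the Brownian trajectory spread apart.
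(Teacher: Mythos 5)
Your outline correctly identifies the three-step structure (first moment, concentration, subsequence + monotonicity), and your step (iii) is essentially the paper's step, modulo a more careful choice of subsequence: with an error of order $(\log\log t)^9/(\log t)^2$, the sequence $t_n=\exp(\sqrt n)$ gives $(\log n)^9/n$, which is not summable, so one needs something like $a_n=\exp(n^{3/4})$ as in the paper; your remark about $L^p$ via higher-moment bounds on the i.i.d.\ sum matches Remark~\ref{rem.Lp}. But the concentration step takes a genuinely different route from the paper's. You obtain the upper bound by subadditivity and the lower bound by testing the variational formula with the weighted convex combination $\mu=\sum_j w_j\mu_j$ of block equilibrium measures. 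The paper instead proves and then iterates the \emph{exact} identity \eqref{decomposition.cap}, $\cp(A\cup B)=\cp(A)+\cp(B)-\chi_r(A,B)-\varepsilon_r(A,B)$, which turns the problem into a two-sided identity with explicit error terms: the cross-term $\chi_r$ is a double hitting probability, bounded in second moment by the intersection estimate Proposition~\ref{lem.hit.3} (via Proposition~\ref{prop.chi.Wiener}), and $\varepsilon_r\le\cp(A\cap B)$ is controlled through Lemma~\ref{lem.epsilon}. Both approaches need the same sharp control on cross-quantities between temporally distinct sausage pieces, but the paper's probabilistic decomposition is cleaner because the error terms are identified exactly and tied to intersection events that one can estimate directly; your energy-based bookkeeping would eventually require analogous estimates, and you correctly flag this off-diagonal control as the main obstacle. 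The paper also does not use $k\sim\log t$ blocks but dyadically iterates down to $2^L\sim(\log t)^4$ pieces, which buys a smaller Chebyshev error and makes the subsequence step painless.

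The real gap in your proposal is step (i). You write that the asymptotic $\E{\cp(W_r[0,t])}\sim 4\pi^2\,t/\log t$ ``is likely available either in the literature or as a companion result.'' It is not: the available intersection estimates in dimension four (Aizenman, Albeverio--Zhou, Peres, Pemantle--Peres--Shapiro) give only order-of-magnitude bounds, or bounds in a different regime, and do not identify the constant. Proving Proposition~\ref{prop-expected} is in fact the bulk of the paper's technical contribution: it requires discretising $\beta$ via exit times $\tau_i^\delta$, encoding the hitting event through the counting functional $R^\delta[0,t]$, establishing the concentration of the conditional expectation $D_0[0,t]=\E{R[0,t]\mid\beta}$ (Lemma~\ref{LD.D0}) after realising it as an essentially i.i.d.\ sum over exponential annuli, and adapting Lawler's ``good/bad $\sigma$'' argument to the continuous setting. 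Without this, the constant $4\pi^2$ in \eqref{SLLN} is not justified, and steps (ii)--(iii) cannot close.
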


The proof of \eqref{SLLN} presents some similarities with the proof in the discrete case, which is given in our companion 
paper \cite{ASS}, but also substantial differences. 
The main difference concerns the computation of the expected capacity, which in the discrete setting had been essentially obtained by Lawler, see \cite{Asselah:2016hc} for details, whereas in our context it requires new 
delicate analysis.

It may seem odd that the fluctuations result we obtain in 
the discrete model~\cite{ASS} are not directly 
transposable in the continuous setting.
However, it was noticed some thirty years ago
by Le Gall~\cite{LG86} that 
{\it it does not seem easy to deduce Wiener
sausage estimates from random walks estimates}, and vice-versa.
Let us explain one reason for that.
The capacity of a set $A$ can be represented as the integral of the
{\it equilibrium measure} of the set $A$, very much as in the discrete formula
for the capacity of the range $\RR[0,n]$ of a random walk (with
obvious notation)
\[
\cp(\RR[0,n])=\sum_{x\in \RR[0,n]} \prstart{H^+_{\RR[0,n]}=\infty}{x}.
\]
Whereas Lawler \cite{Law91} has established deep non-intersection
results for two random walks in dimension four, the corresponding
results for the equilibrium measure of $W_1(0,t)$ are still missing.

As noted in \cite{ASS}, the scaling in Theorem~\ref{theo.cap.wiener} is analogous to that of the law of large numbers for the volume of the
Wiener sausage in $d=2$ (see~\cite{LGsaucisse}).

\begin{remark}{\em
The limit in \reff{SLLN} is independent of the radius revealing
the scale invariance property, a sign of criticality,
of the scaled limit of the capacity
of the sausage in dimension four. Let us explain better the scaling,
and the criticality of $d=4$. The hitting-time representation
\reff{cap.def} yields a similar formula when we take expectation.
Thus, if $\widetilde W$ denotes a sausage built from $\til \beta$
a Brownian motion independent of $\beta$, then we establish
\begin{equation}\label{exp-cap}
\E{\cp(W_1[0,t])}=\lim_{\|z\|\to\infty}
\frac{1}{G(z)}\prstart
{W_{1/2}[0,t]\cap \widetilde W_{1/2}[0,\infty) \not= \emptyset}{0,z}.
\end{equation}
Now two independent Wiener sausages $W_{1/2}[0,t]$ and
$\widetilde W_{1/2}[0,\infty)$, started at a distance of
order $\sqrt t$, meet with non-vanishing probability (as $t\to\infty$)
only if $d<4$. Indeed, when $\norm{z}$ is of 
order $\sqrt t$, then
\begin{equation}\label{inter-dim}
\prstart
{W_{1/2}[0,t]\cap \widetilde W_{1/2}[0,\infty) \not= \emptyset}{0,z}
 \approx\left\{ \begin{array}{ll}
1 & \textrm{if }d=3\\
\frac{1}{\log t} & \textrm{if }d=4\\
\frac{1}{t^{(d-4)/2}} & \textrm{if }d\ge 5.
\end{array}
\right.
\end{equation}
To make a link with \eqref{exp-cap}, at least at a heuristic level,
recall that $W_{1/2}[0,t]$ lives basically in $\B(0,\sqrt t)$, the
Euclidean ball of center $0$ and radius $\sqrt t$, 
and then we
condition $\til \beta$ on hitting first $\B(0,\sqrt t)$. Note that
by \reff{cap.def} and the scaling property of Brownian motion
\begin{equation}\label{hitting-ball}
\lim_{\|z\|\to\infty}
\frac{1}{G(z)}\prstart{H_{\B(0,\sqrt t)}<\infty}{z}=\cp(\B(0,\sqrt t))=
t^{(d-2)/2} \cp(\B(0,1)).
\end{equation}
Thus, at a heuristic level, combining \reff{inter-dim} and \reff{hitting-ball}, we obtain
\[
\E{\cp(W_1[0,t])}\approx\left\{ \begin{array}{ll}
\sqrt t & \textrm{if }d=3\\
\frac{t}{\log t} & \textrm{if }d=4\\
t & \textrm{if }d\ge 5.
\end{array}
\right.
\]
}
\end{remark}

\begin{remark}{\em
Our result is indeed a result about non-intersection probabilities
for two independent Wiener sausages, and the asymptotic result \reff{SLLN}
reads as follows. For any $\epsilon>0$, almost surely, for $t$ large enough, 
\begin{equation}\label{SLLN-bis}
(1-\epsilon) \frac{2t}{\log t}\le
\lim_{\|z\|\to\infty} \|z\|^2 \cdot \prstart{
W_{1/2}[0,t]\cap \widetilde W_{1/2}[0,\infty)\not= \emptyset\ \Big|\ \beta}{0,z}
\le (1+\epsilon) \frac{2t}{\log t}.
\end{equation}
Estimates, up to constants, have been obtained in a different
regime (where $z$ and $t$ are related as $z=\sqrt t x$) by Pemantle, Peres and Shapiro \cite{PPS}, but cannot be used to obtain our strong
law of large number.}
\end{remark}

One delicate part in Theorem~\ref{theo.cap.wiener} is
establishing convergence for the scaled expected capacity. This
is Proposition~\ref{prop-expected} of Section~\ref{Sec.expcap}.
From \reff{cap.def}, the expected capacity of a Wiener sausage is
equivalent to the
probability that two Wiener sausages intersect. Estimating such
a probability has a long tradition: pioneering works were produced 
by Dvoretzky, Erd\"os and Kakutani \cite{DEK} and Aizenman \cite{A};
Aizenman's results have been subsequently improved by 
Albeverio and Zhou \cite{AZ}, 
Peres \cite{P}, Pemantle, Peres and Shapiro \cite{PPS} and Khoshnevisan~\cite{K} (and references therein). 
In the discrete setting, the literature is even larger and older,
and analogous results are presented in Lawler's 
comprehensive book \cite{Law91}.

As a byproduct of our arguments, 
we improve a Large Deviation estimate of Erhard and Poisat \cite{EP},
and obtain a {\it nearly correct} estimate of the variance,
which will have to be improved for studying the fluctuations.
\begin{proposition}\label{cor-LD}
There is a constant $c>0$, such that for any $0<\epsilon<1$, there
exists $\kappa=\kappa(\epsilon)$ such that for any $t$ large enough
\begin{equation}\label{ineq-LD}
\pr{ \cp(W_1[0,t])-\E{\cp(W_1[0,t])}\ge \epsilon\frac{t}{\log t}}
\le \exp\big(- c\, \epsilon^2 t^{\kappa}\big).
\end{equation}
Moreover, there exists a constant $C>0$, such that for  
$t$ large enough, 
\begin{equation}\label{ineq-var}
\var\big(\cp(W_1[0,t])\big)\ \le\ 
C\, (\log \log t)^9\frac{t^2}{(\log t)^4}.
\end{equation}
\end{proposition}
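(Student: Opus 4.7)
The plan is to use a Doob martingale decomposition along the Brownian filtration. Partition $[0,t]$ into $N$ intervals of equal length $s := t/N$ with breakpoints $t_i = is$, and define
\[
M_i := \E{\cp(W_1[0,t]) \,\middle|\, \F_{t_i}}, \qquad i = 0, 1, \ldots, N,
\]
where $\F_{t_i} = \sigma(\beta_u : u \le t_i)$, so that $M_0 = \E{\cp(W_1[0,t])}$, $M_N = \cp(W_1[0,t])$, and I want to control the telescoping sum. I take $N = \lfloor t^\kappa \rfloor$ for \eqref{ineq-LD} and $N$ polylogarithmic in $t$ for \eqref{ineq-var}.

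The core step is a bound on the martingale increments of the form $|M_i - M_{i-1}| \le \cp(W_1[t_{i-1}, t_i]) + \E{\cp(W_1[0,s])}$. To obtain it, use the strong Markov property: conditionally on $\F_{t_i}$, the tail $W_1[t_i, t]$ has the same distribution as $\beta_{t_i} + \widetilde W$, with $\widetilde W$ an independent sausage of time $t - t_i$ started from the origin, so both $M_i$ and $M_{i-1}$ can be written as integrals of $\cp(A \cup (x + \widetilde W))$. Introduce the intermediate quantity $\tilde M_i := \E{\cp(W_1[0,t_{i-1}] \cup (\beta_{t_i} + \widetilde W)) \mid \F_{t_i}}$, in which the $i$-th piece has been erased but the tail anchor is kept at $\beta_{t_i}$. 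Subadditivity and monotonicity of capacity give $|M_i - \tilde M_i| \le \cp(W_1[t_{i-1}, t_i])$. Since the conditional law of $\beta_{t_i}$ given $\F_{t_{i-1}}$ is Gaussian regardless of whether the $i$-th increment is the genuine one or a fresh resample, one similarly obtains $|M_{i-1} - \E{\tilde M_i \mid \F_{t_{i-1}}}| \le \E{\cp(W_1[0,s]) \mid \F_{t_{i-1}}}$, whence the desired increment bound.

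With this in hand, for \eqref{ineq-LD} I work on the truncation event $\EE = \bigcap_{i=1}^N \{\cp(W_1[t_{i-1},t_i]) \le Ks/\log s\}$; on $\EE$ all increments satisfy $|M_i - M_{i-1}| \le CKs/\log s$ deterministically, so Azuma--Hoeffding gives
\[
\pr{M_N - M_0 \ge \lambda,\, \EE} \le \exp\!\left(-\frac{\lambda^2}{2N(CKs/\log s)^2}\right).
\]
Substituting $s = t^{1-\kappa}$, $N = t^\kappa$ and $\lambda = \epsilon t/\log t$ yields an upper bound of $\exp(-c\epsilon^2 t^\kappa)$, after absorbing the $(\log t)^{-2}$ factor into a slightly smaller $\kappa$. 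The truncation error $\pr{\EE^c}$ is superpolynomially small by the ball-confinement bound $\pr{W_1[0,s]\not\subset B(0,R)} \le e^{-cR^2/s}$, together with $\cp(B(0,R)) = O(R^2)$ and a union bound over the $N$ pieces. For \eqref{ineq-var}, martingale orthogonality gives $\var(\cp(W_1[0,t])) \le 4N \, \E{\cp(W_1[0,s])^2}$; inputting the second-moment estimate $\E{\cp(W_1[0,s])^2} \le C(\log\log s)^9 (s/\log s)^2$ (combining the expected-capacity asymptotic with a routine concentration input) and choosing $N$ polylogarithmic in $t$ so that $\log(t/N) \sim \log t$, produces \eqref{ineq-var}.

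The main obstacle is the cancellation step $\E{\tilde M_i \mid \F_{t_{i-1}}} \approx M_{i-1}$. Naively, changing the $i$-th Brownian increment shifts the entire tail $\beta_{[t_i, t]}$ by a random vector, forcing a whole new sausage into the comparison; the point is that, conditional on $\F_{t_{i-1}}$, the endpoint $\beta_{t_i}$ has a fixed Gaussian law whether we use the true $i$-th increment or a fresh resample, which allows a coupling of the tail sausages and trades the ``shift'' for an independent sample. Getting the correct polynomial exponent in \eqref{ineq-LD} hinges on balancing the Azuma bound against the truncation error, while the $(\log\log t)^9$ factor in \eqref{ineq-var} tracks the precision lost in passing from the first-moment asymptotic for $\cp(W_1[0,s])$ to the second moment.
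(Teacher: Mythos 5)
Your martingale approach is genuinely different from the paper's, but it has two real gaps that I don't see how to repair without essentially importing the paper's own tools.

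\textbf{The increment bound is not correct.} Writing $A=W_1[0,t_{i-1}]$, $B=W_1[t_{i-1},t_i]$, and noting that given $\F_{t_i}$ the tail $W_1[t_i,t]$ has the law of $\beta_{t_i}+\widetilde W$ for an independent sausage $\widetilde W$ over time $t-t_i$, your quantity $\tilde M_i$ is a function $g(A,\beta_{t_i})$ with
\[
g(A,z) \;=\; \E{\cp\bigl(A\cup(z+\widetilde W)\bigr)} .
\]
The correct telescope is
\[
M_i - M_{i-1} \;=\; \bigl(M_i - \tilde M_i\bigr) \;+\; \bigl(\tilde M_i - \E{\tilde M_i\mid\F_{t_{i-1}}}\bigr) \;+\; \bigl(\E{\tilde M_i\mid\F_{t_{i-1}}} - M_{i-1}\bigr).
\]
You bound the first and third terms by $\cp(B)$ and $\E{\cp(W_1[0,s])}$, which is fine, but your ``whence'' skips the middle term entirely. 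That term is the fluctuation of $g(A,\cdot)$ over the Gaussian increment $\beta_{t_i}-\beta_{t_{i-1}}$, and the only a priori bound available from monotonicity is $|g(A,z_1)-g(A,z_2)|\le\E{\cp(\widetilde W)}\asymp (t-t_i)/\log(t-t_i)$, which for most $i$ is of the full order $t/\log t$, not $s/\log s$. The $z$-dependence of $g$ is exactly the cross-term $\E{\chi_r(A,z+\widetilde W)}$ from the paper's decomposition \eqref{decomposition.cap}, so controlling this middle term in the variance would require precisely the kind of second-moment estimate on $\chi_r$ that the paper proves in Proposition~\ref{prop.chi.Wiener}. Calling this a ``routine concentration input'' hides the genuine work. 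In other words, the martingale decomposition does not let you avoid the cross-term analysis; it resurfaces in $\tilde M_i-\E{\tilde M_i\mid\F_{t_{i-1}}}$.

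\textbf{The truncation level is too low for \eqref{ineq-LD}.} You truncate at $Ks/\log s$ and claim $\pr{\EE^c}$ is superpolynomially small by ball confinement. But $\cp(W_1[0,s])$ has mean $\asymp 4\pi^2\,s/\log s$ while its exponential moments (which come from volume bounds, cf.\ Lemma~\ref{lem.epsilon} and~\eqref{LDcap.2}) live at scale $s$, not $s/\log s$. Concretely, to force $\cp(W_1[0,s])\le Ks/\log s$ via $W_1[0,s]\subset\B(0,R)$ you need $R\asymp\sqrt{Ks/\log s}$, and then $\pr{W_1[0,s]\not\subset\B(0,R)}\gtrsim 1-e^{-cK/\log s}\to 1$. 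So $\pr{\EE^c}$ is close to $1$, not small. You could instead truncate at $Ks$ with $K$ polylogarithmic, which makes $\pr{\EE^c}$ genuinely small and still gives $\exp(-c\epsilon^2 t^{\kappa'})$ for a reduced $\kappa'$; but then the bound depends on the increment estimate, which as above is not established.

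For comparison: the paper gets \eqref{ineq-LD} from Proposition~\ref{prop-ULD}, which uses subadditivity $\cp(W_1[0,t])\le\sum_{k<2^L}\cp(W_1[kt2^{-L},(k+1)t2^{-L}])$, the i.i.d.\ structure of the summands, a uniform exponential moment for $\cp(W_1[0,s])/s$ coming from the volume, and exponential Chebyshev. No martingale is needed, and the bias between $\sum_k\E{X_k}$ and $\E{\cp(W_1[0,t])}$ is absorbed by choosing $\kappa$ small. For \eqref{ineq-var}, the paper iterates the exact decomposition~\eqref{decomposition.cap} into a sum $S$ of i.i.d.\ terms, a cross-term sum $\Xi$, and an overlap sum $\Upsilon$, and bounds the variance of each separately; the $(\log\log t)^9$ comes from the second moment of $\chi_r$ established in Proposition~\ref{prop.chi.Wiener}. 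Your martingale route would need to recover exactly that cross-term control to close the gap.
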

\begin{remark}{\em We do not know what is the correct
speed in the large deviation estimate \reff{ineq-LD}.
The analogous result for the volume of
the sausage in $d=2$ (or even the size of the range of a random
walk) is not known. On the other hand, the correct order for the variance should be $t^2/(\log t)^4$, as was proved in the discrete setting \cite{ASS}. Thus our bound in \reff{ineq-var}
is off only by a $(\log \log t)^9$ term.}
\end{remark}

One key step of our investigation is a simple formula
for the capacity of the sausage which is neither asymptotic nor
variational. In Section~\ref{subsec.cap}, we deduce a decomposition 
formula for the capacity of the union of two sets in terms
of the sum of capacities and a cross-term: 
for any two compact sets $A$ and $B$, and for any $r>0$ 
with $A\cup B\subset \B(0,r)$, 
\begin{equation}
\label{decomposition.cap}
\cp(A\cup B) = \cp(A) + \cp(B) - \chi_r(A,B) - \varepsilon_r(A,B),
\end{equation}
with
\begin{equation}
\label{chi.r}
\chi_r(A,B) = 2\pi^2 \, r^2 \cdot \frac 1{|\partial \B(0,r)|} 
\int_{\partial \B(0,r)} (\prstart{H_A<H_B<\infty}{z} + \prstart{H_B<H_A<\infty}{z}) \, dz,
\end{equation}
and
\begin{align}\label{eq:epsilon-r}
\varepsilon_r(A,B) = 2\pi^2 r^2\cdot \frac 1{|\partial \B(0,r)|} 
\int_{\partial \B(0,r)} \prstart{H_A= H_B<\infty}{z}  \, dz,
 \end{align}
where we use the notation $\B(0,r)$ for the
ball of radius $r$ and $\partial \B(0,r)$ for its boundary. 
In particular $ \varepsilon_r(A,B) \le \cp(A\cap B)$. 
The decomposition formula \reff{decomposition.cap} is of a different 
nature to the one presented in~\cite{ASS} for the discrete setting. 
As an illustration, a key technical estimate here concerns
the cross term $\chi_r(A,B)$ where $A$ and $B$ are independent sausages.
In order to bound its first moment, we prove an estimate on the probability of intersection of a Wiener sausage by two other independent Brownian motions.

\begin{proposition}
\label{lem.hit.3}
Let $\beta$, $\gamma$ and $\widetilde \gamma$ be three independent Brownian motions. For any $\alpha>0$ and $c\in (0,1)$, there exist positive constants $C$ and $t_0$, 
such that for all $t>t_0$ and all $z,z'\in \R^4$, with $\sqrt t\cdot (\log t)^{-\alpha}\le \|z\|,\|z'\| \le \sqrt t \cdot(\log t)^\alpha$,  
\begin{equation}\label{hit.zz'}
\bP_{0,z,z'} (W_1[0,t] \cap \gamma[0,\infty)\not=\emptyset ,\  W_1[0,t]
 \cap\widetilde \gamma[0,\infty)\not=\emptyset) \, \le C\,   \frac{(\log \log t)^4}{(\log t)^2}
 \, (1\wedge \frac{t}{\|z'\|^2})\, (1\wedge \frac{t}{\|z\|^2}),
\end{equation}
where $\bP_{0,z,z'}$ means that $\beta$, $\gamma$ and $\til{\gamma}$ start from $0$, $z$ and $z'$ respectively.
\end{proposition}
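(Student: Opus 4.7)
My plan is to condition on $\beta$ and use conditional independence of $\gamma$ and $\widetilde\gamma$ to reduce \eqref{hit.zz'} to a second-moment estimate for the capacity of $W:=W_1[0,t]$.

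\textbf{Reduction to a sphere.} For $\|z\|>\sqrt t$, I would apply the strong Markov property to $\gamma$ at the first hitting time of $\partial\B(0,\sqrt t)$: the standard 4D estimate $\bP_z(H_{\B(0,\sqrt t)}<\infty)\asymp t/\|z\|^2$ extracts the factor $1\wedge t/\|z\|^2$, with the entrance distribution on $\partial\B(0,\sqrt t)$ dominated by a constant multiple of the uniform measure. Treating $\widetilde\gamma$ similarly, and using conditional independence of $\gamma,\widetilde\gamma$ given $\beta$, it suffices to prove
\[
\E{h(\beta,w)\,h(\beta,w')}\le C\,\frac{(\log\log t)^4}{(\log t)^2},\qquad h(\beta,w):=\bP_w(\gamma\cap W\ne\emptyset),
\]
uniformly over $w,w'$ with $\|w\|,\|w'\|\asymp\sqrt t$.

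\textbf{From $h$ to the capacity.} The last-exit decomposition $\bP_w(H_W<\infty)=\int_W G(w,y)\,d\mu_W(y)$, combined with the Green's-function bound $G(w,y)\le C/t$ for $|w-y|\gtrsim\sqrt t$, yields $h(\beta,w)\le C\cp(W)/t$ on the likely event that the sausage stays within a polylog factor of $\B(0,\sqrt t)$ and $w$ is at least $c\sqrt t$ from $W$. The small exceptional event, on which we use the trivial bound $h\le 1$, contributes at most a polylog factor; accounting for both motions squares the polylog into the $(\log\log t)^4$. We thereby reduce \eqref{hit.zz'} to an estimate of the form
\[
\E{\cp(W)^2}\le C(\log\log t)^{a}\,\frac{t^2}{(\log t)^2}
\]
for some small exponent $a$.

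\textbf{Second moment of the capacity.} I would combine Proposition~\ref{prop-expected}, giving $\E{\cp(W)}\sim 4\pi^2 t/\log t$, with a moderate variance estimate $\var(\cp(W))\le C(\log\log t)^a(t/\log t)^2$ proved via an Efron--Stein inequality: partition $\beta$ into $N:=\lceil\log t\rceil$ equal pieces and use the decomposition formula \eqref{decomposition.cap} to bound the single-piece discrete derivative by a one-piece capacity plus cross terms $\chi_r,\varepsilon_r$; these are in turn controlled by first-moment intersection estimates on a single time scale, which are weaker than \eqref{hit.zz'} and therefore available non-circularly.

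\textbf{Main obstacle.} The principal difficulty is the chicken-and-egg problem with Proposition~\ref{cor-LD}: the sharp variance bound \eqref{ineq-var} would immediately close the argument, but is itself derived from \eqref{hit.zz'}. One must therefore prove the variance estimate above using only tools that precede Proposition~\ref{lem.hit.3} --- the mean capacity from Proposition~\ref{prop-expected}, the decomposition formula \eqref{decomposition.cap}, and single-scale intersection bounds --- accepting a $(\log\log t)^a$ loss that is absorbed by the $(\log\log t)^4$ factor in \eqref{hit.zz'}. A secondary technical point is maintaining control of the entrance distribution on $\partial\B(0,\sqrt t)$ when $\|z\|$ is only polylogarithmically larger than $\sqrt t$, which forces the range $\sqrt t(\log t)^{-\alpha}\le\|z\|,\|z'\|\le\sqrt t(\log t)^\alpha$ stated in the hypothesis.
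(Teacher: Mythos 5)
Your conditioning-on-$\beta$ idea (exploiting the conditional independence of $\gamma$ and $\widetilde\gamma$) is a reasonable starting point, and the claim that the second moment of the capacity, $\E{\cp(W_1[0,t])^2}\lesssim t^2/(\log t)^2$, is available non-circularly is correct --- in fact you don't even need Efron--Stein, since Proposition~\ref{prop-ULD} (equivalently Remark~\ref{rem.Lp}) already gives $L^p$-boundedness of $\frac{\log t}{t}\cp(W_1[0,t])$ and is proved before Proposition~\ref{lem.hit.3} without any appeal to it. So your ``chicken-and-egg'' worry is misdirected.

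However, there is a genuine gap in the central reduction. You propose to bound $h(\beta,w)=\bP_w(\gamma\cap W\ne\emptyset\mid\beta)$ via Lemma~\ref{cond.hit} as $h\le\cp(W)/(2\pi^2 d(w,W)^2)$ on the event $\{d(w,W)\ge c\sqrt t\}$, and to use the trivial bound $h\le1$ on the complement. But the probability that a point $w$ on $\partial\B(0,\sqrt t)$ lies within distance $\sqrt t/(\log t)^{\alpha'}$ of $W_1[0,t]$ is of order $(\log t)^{-2\alpha'}$ by~\eqref{hit.ball}, so the exceptional event contributes $\gtrsim(\log t)^{-2\alpha'}$; meanwhile the main term becomes $(\log t)^{4\alpha'}\cdot\E{\cp(W)^2}/t^2\gtrsim(\log t)^{4\alpha'-2}$. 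No choice of constant $\alpha'>0$ balances these to $(\log\log t)^4/(\log t)^2$: the dichotomy ``far from $W$ / close to $W$'' inherently costs a polynomial power of $\log t$, not a power of $\log\log t$. What actually saves the day in the paper's Lemma~\ref{lem.hit.1} is not a dichotomy but an integration over the full distribution of $d(z,\beta[0,t])$, which yields a $\log(\|z\|/r)\asymp\log\log t$ factor rather than a power of $\log t$; and Lemma~\ref{lem.hit.2} handles the small-$\|z\|$ regime (down to $\sqrt t(\log t)^{-\alpha}$) via a dyadic time decomposition, where your proposed reduction to the entrance distribution on $\partial\B(0,\sqrt t)$ simply does not apply since $z$ may already lie inside the ball.

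The paper's proof of Proposition~\ref{lem.hit.3} takes a structurally different route that avoids all of this: it first establishes the one-Brownian-motion bound (Proposition~\ref{prop.hit}) carrying the $(\log\log t)^2/\log t$ factor, then applies the strong Markov property at the first time $\sigma$ that $W_1[0,s]$ meets $\gamma$, so that $W_1[\sigma,t]$ becomes a fresh sausage independent of $\gamma$, to which Proposition~\ref{prop.hit} (with respect to $\widetilde\gamma$) is applied a second time. The two applications multiply the polylog-log factors, and a further case analysis on $\|z'-\beta_\sigma\|$ produces the confinement factors $(1\wedge t/\|z\|^2)(1\wedge t/\|z'\|^2)$. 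To salvage your approach you would essentially have to re-derive the single-Brownian-motion bound with the careful distribution integration built in, at which point you would have reproduced Proposition~\ref{prop.hit}, and the bootstrapping via $\sigma$ is then the cleanest way to handle the second motion; the detour through $\E{\cp(W)^2}$ and Efron--Stein adds complexity without closing the gap.
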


We note that the problem of obtaining a law of large numbers 
for the capacity of the Wiener sausage has been raised recently 
by  van den Berg, Bolthausen and den Hollander~\cite{BBH16} in connection 
with the torsional rigidity of the complement
of the Wiener sausage on a torus.

The paper is organised as follows. Section~\ref{sec:prelim} 
contains preliminary results: in Section~\ref{sec:not} 
we gather some well-known facts about Brownian motion, and in Section~\ref{subsec.cap}
we prove~\eqref{decomposition.cap} and compare the capacity of a Wiener sausage to 
its volume.  In Section~\ref{Sec.expcap} we prove the asymptotic for the 
expected capacity. In Section~\ref{Sec.updev},
we deduce our large deviation bounds \reff{cor-LD}.
In Section~\ref{subsec.intersection} we provide some 
intersection probabilities of a Wiener sausage 
by another Brownian motion, and 
deduce a second moment bound of the cross-terms $\chi_r$ 
appearing in the decomposition~\eqref{decomposition.cap}.   
Finally, we prove  
Theorem~\ref{theo.cap.wiener} in Section~\ref{sec:finalproof}. 

\section{Preliminaries}\label{sec:prelim}
\subsection{Notation and basic estimates}\label{sec:not}
We denote by $\bP_z$ the law of a Brownian motion starting from $z$, and simply write $\bP$ when $z$ is the origin. 
Likewise $\bP_{z,z'}$ will denote the law of two independent Brownian motions starting respectively from $z$ and $z'$, and similarly for $\bP_{z,z',z''}$. 
For any $x\in \R^4$ and $r>0$, we denote by $\B(x,r)$ the ball of 
radius $r$ centered at $x$.  
We write $|A|$ for the Lebesgue measure of a Borel set $A$. 
We denote by $\|\cdot\|$ the Euclidean norm and by $p_s(x,y)$ the transition kernel of the Brownian motion: 
$$p_s(x,y)= \frac{1}{4\pi^2 s^2}e^{-\frac{\|x-y\|^2}{2s}}=p_s(0,y-x).$$
The Green's function is defined by 
$$G(x,y)=\int_0^\infty p_s(x,y)\ ds := G(y-x).$$
We recall, see Theorem~3.33 in \cite{BM}, that for all $x\neq 0$, 
\begin{equation}
\label{green}
G(x)\ =\  \frac 1{2\pi^2}\cdot \frac 1{\|x\|^2}. 
\end{equation}
We will also write
$$G_t(x):=\int_0^t p_s(0,x)\, dx.$$
Remember now that  
for any $z\in \R^4$, with $\|z\|>r$ (see Corollary 3.19 in \cite{BM}),   
\begin{equation}
\label{hit.ball}
\prstart{H_{\mathcal B(0,r)}<\infty}{z}  = \frac{r^2}{\|z\|^2}. 
\end{equation}
We also need the following well-known estimates. There exist positive constants $c$ and $C$, such that for any $t>0$ and $r>0$, 
\begin{equation}
\label{confinment1}
\pr{\sup_{s\le t} \|\beta_s\| > r }\ \le\ C\cdot \exp(-c\, r^2/t),
\end{equation}
and 
\begin{equation}
\label{confinment2}
\pr{\sup_{s\le t} \|\beta_s\| \le  r}\ \le\ C\cdot \exp(-c\, t/r^2). 
\end{equation}
Finally, we recall the basic result (see Corollary 8.12 and 
Theorem 8.27 in \cite{BM}): 
\begin{lemma}\label{cond.hit}
Let $A$ be a compact set in $\R^4$. Then for any $x\in \R^4\backslash A$, 
$$\prstart{H_A<\infty}{x}  \ \le \ \frac 1{2\pi^2\, d(x,A)^2} \cdot \cp(A),$$
where $d(x,A):=\inf \{\|x-y\|\, :\, y\in A\}$. 
\end{lemma}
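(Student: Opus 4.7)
The plan is to invoke the equilibrium-measure representation of Newtonian capacity and then use the explicit form of Green's function in dimension four, already recorded as \eqref{green}.

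First I would recall that for any compact set $A\subset\R^4$ there exists a finite nonnegative Borel measure $\mu_A$ supported on $A$, called the equilibrium measure of $A$, with total mass $\mu_A(A)=\cp(A)$, and satisfying the identity
\begin{equation*}
\prstart{H_A<\infty}{x} \ =\ \int_A G(x-y)\, d\mu_A(y) \qquad \text{for all } x\in \R^4.
\end{equation*}
This is exactly the content of Corollary~8.12 together with Theorem~8.27 in \cite{BM}, which the paper already cites. The key input to steal here is that $\mu_A$ has total mass equal to $\cp(A)$; this is the whole substance, and it is why the lemma is phrased as a direct corollary.

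Second, I would combine this representation with the explicit formula $G(z)=\frac{1}{2\pi^2\|z\|^2}$ from \eqref{green}. For any $x\in\R^4\setminus A$ and any $y\in A$, one has the trivial pointwise bound $\|x-y\|\ge d(x,A)$, hence
\begin{equation*}
G(x-y)\ =\ \frac{1}{2\pi^2\|x-y\|^2}\ \le\ \frac{1}{2\pi^2\, d(x,A)^2}.
\end{equation*}
Integrating this bound against $\mu_A$ and using $\mu_A(A)=\cp(A)$ yields
\begin{equation*}
\prstart{H_A<\infty}{x}\ =\ \int_A G(x-y)\,d\mu_A(y)\ \le\ \frac{1}{2\pi^2\, d(x,A)^2}\,\mu_A(A)\ =\ \frac{\cp(A)}{2\pi^2\, d(x,A)^2},
\end{equation*}
which is the desired inequality.

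There is no genuine obstacle here: the only non-trivial ingredient is the existence of the equilibrium measure with the stated total mass, which is quoted from \cite{BM}. The role of the condition $x\notin A$ is merely to guarantee $d(x,A)>0$, so that the right-hand side is finite; for $x\in A$ the inequality is vacuous since the left-hand side equals $1$.
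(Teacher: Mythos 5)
Your proof is correct and is precisely the argument underlying the paper's citation of Corollary~8.12 and Theorem~8.27 of \cite{BM}: represent $\prstart{H_A<\infty}{x}$ as the potential of the equilibrium measure $\mu_A$ (of total mass $\cp(A)$), bound the Green kernel pointwise by $G(x-y)\le \frac{1}{2\pi^2 d(x,A)^2}$ for $y\in A$, and integrate. The paper gives no separate proof beyond this reference, so there is nothing further to compare.
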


\subsection{On capacity}\label{subsec.cap}
We first give a representation formula for the capacity of a set, which has the advantage of not being given as a limit. 
If $A$ is a compact subset of $\R^4$, with $A\subset \B(0,r)$ for some $r>0$,
then
\begin{eqnarray}
\label{cap.formula}
\nonumber \cp(A)& =& \lim_{\|x\|\to \infty} \frac{\prstart{H_A<\infty}{x}}{G(x)} = \lim_{\|x\|\to \infty} \frac{\prstart{H_{\partial \B(0,r)}<\infty}{x}}{G(x)} \cdot  \int_{\partial \B(0,r)} \prstart{H_A<\infty}{z} \, d\rho_x(z)\\
&=& 2\pi^2\, r^2\cdot \,  
\int_{\partial \B(0,r)} \prstart{H_A<\infty}{z} \, d\lambda_r(z),
\end{eqnarray}
where $\rho_x$ is the law of the Brownian motion starting from $x$ at time $H_{\partial \B(0,r)}$, conditioned on this hitting time being finite, and $\lambda_r$ is the uniform measure on $\partial \B(0,r)$. 
The second equality above follows from the Markov property, 
and the last equality expresses the fact that 
the harmonic measure from infinity of a ball, which by Theorem 3.46 in \cite{BM} is also the weak limit of $\rho_x$ as $x$ goes to infinity, 
is the uniform measure on the boundary of the ball.

The decomposition formula~\eqref{decomposition.cap} for the capacity of the union of two sets follows immediately using~\eqref{cap.formula}.

\vspace{0.2cm}
Now we state a lemma which bounds the capacity of the intersection of two Wiener sausages
by the volume of the intersection of larger sausages. 
\begin{lemma}
\label{lem.epsilon}
Let $W$ and $\widetilde W$ be two  independent Wiener sausages. 
Then, almost surely, for all~$t>0$, 
\begin{equation}\label{cap-vol1}
\cp(W_1[0,t]) \ \le \ C_1 \cdot |W_{4/3}[0,t]|,
\end{equation} 
and 
\begin{equation}\label{cap-vol2}
\cp(W_1[0,t] \cap \widetilde W_1[0,t]) \ 
\le\ C_1\cdot | W_4[0,t]\cap \widetilde W_4[0,t]|.
\end{equation}
with $C_1=\cp(\B(0,4))/|\B(0,4/3)|$. 
Moreover, there is a constant $C_2>0$, such that for all $t\ge 2$, 
\begin{equation}\label{cap-vol3}
\E{\cp^2(W_1[0,t]\cap \widetilde W_1[0,t])}\ \le\ C_2\, (\log t)^2.
\end{equation}
\end{lemma}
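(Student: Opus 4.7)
The plan is to prove (\ref{cap-vol1}) and (\ref{cap-vol2}) by a deterministic packing--covering argument based on subadditivity of capacity, and to deduce (\ref{cap-vol3}) from (\ref{cap-vol2}) by a Fubini calculation for the expected squared volume of $W_4[0,t]\cap\widetilde W_4[0,t]$.

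For (\ref{cap-vol1}), I would take a maximal set $\{y_i\}\subset\beta[0,t]$ with $\|y_i-y_j\|\ge 8/3$ for $i\ne j$. By maximality $\beta[0,t]\subset\bigcup_i\B(y_i,8/3)$, so $W_1[0,t]\subset\bigcup_i\B(y_i,11/3)\subset\bigcup_i\B(y_i,4)$. Monotonicity and finite subadditivity of $\cp$ (both immediate from the hitting-time definition (\ref{cap.def}) via the union bound) give $\cp(W_1[0,t])\le N\cdot\cp(\B(0,4))$, where $N$ is the cardinality of $\{y_i\}$. On the other hand the balls $\B(y_i,4/3)$ are pairwise disjoint and, since $y_i\in\beta[0,t]$, are each contained in $W_{4/3}[0,t]$; hence $N\cdot|\B(0,4/3)|\le|W_{4/3}[0,t]|$, which combined with the previous bound yields (\ref{cap-vol1}). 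The proof of (\ref{cap-vol2}) is entirely analogous: the centers $y_i$ are chosen maximal at separation $\ge 8/3$ inside $W_1[0,t]\cap\widetilde W_1[0,t]$, so that the cover is again by balls of radius $\le 4$, while the packing balls $\B(y_i,4/3)$ now lie in $W_{7/3}[0,t]\cap\widetilde W_{7/3}[0,t]\subset W_4[0,t]\cap\widetilde W_4[0,t]$.

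For (\ref{cap-vol3}), applying (\ref{cap-vol2}), expanding the square as a double integral, and using the independence of $\beta$ and $\widetilde\beta$ together with Fubini,
\[
\E{\cp^2(W_1[0,t]\cap\widetilde W_1[0,t])}\le C_1^2\int_{\R^4}\!\int_{\R^4}\pr{H_{\B(x,4)}\le t,\,H_{\B(y,4)}\le t}^2\,dx\,dy.
\]
By the strong Markov property applied at the smaller of $H_{\B(x,4)}$ and $H_{\B(y,4)}$, combined with the hitting formula (\ref{hit.ball}), for $\|x\|,\|y\|\gg 1$ and $\|x-y\|\gg 1$ one obtains
\[
\pr{H_{\B(x,4)}\le t,\,H_{\B(y,4)}\le t}\le \frac{C}{\|x\|^2\|x-y\|^2}+\frac{C}{\|y\|^2\|x-y\|^2}.
\]
In the regions where one of $\|x\|,\|y\|,\|x-y\|$ is $O(1)$ I would use the trivial bound $1$, and for $\|x\|$ or $\|y\|$ much larger than $\sqrt t$ the confinement estimate (\ref{confinment1}) makes the contribution negligible. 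Integrating the square of this bound in polar coordinates (with volume element $\rho^3\,d\rho$) then yields two essentially decoupled factors of order $\log t$, producing the required $(\log t)^2$.

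The main obstacle is the bookkeeping in this last step: one has to verify that the near-diagonal piece $\|x-y\|\le 1$ and the small-center pieces $\|x\|\le 1$ or $\|y\|\le 1$ each contribute only $O(\log t)$ (hence subleading), and that the tail $\|x\|,\|y\|\gg \sqrt t$ is indeed negligible after the confinement estimate. The reason the final bound is $(\log t)^2$ rather than larger is that each of the two hitting events contributes a single $\log t$ factor---mirroring the first-moment estimate $\E{|W_4[0,t]\cap\widetilde W_4[0,t]|}=O(\log t)$---and the two integrations decouple up to a constant.
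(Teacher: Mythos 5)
Your proofs of \eqref{cap-vol1} and \eqref{cap-vol2} are correct and essentially the same as the paper's: a Vitali-type packing/covering on $\beta[0,t]$ (resp.\ on $W_1\cap\widetilde W_1$) combined with monotonicity and finite subadditivity of capacity. Phrasing it via a maximal $8/3$-separated set rather than iteratively discarding balls is a cosmetic difference. One thing worth noting for \eqref{cap-vol2}: you chose centers in $W_1[0,t]\cap\widetilde W_1[0,t]$ rather than on $\beta[0,t]$ as the paper does; both choices work, and yours cleanly gives $\B(y_i,4/3)\subset W_{7/3}\cap\widetilde W_{7/3}$, which is even a bit tighter than needed.

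For \eqref{cap-vol3} your plan is correct in the end, but it takes a genuinely heavier route than the paper's and the ``bookkeeping'' you flag hides one real subtlety. The paper, after applying the strong Markov property at the first of the two hitting times, does \emph{not} estimate the integral directly. Instead it observes that for every fixed $z$,
\[
\int_{\R^4} \prstart{H_{\B(z',1)}<t}{\nu_z}^2\,dz' \;=\; \E{|W_1[0,t]\cap \widetilde W_1[0,t]|},
\]
by translation invariance, which collapses the whole double integral to $4\,\E{|W_1\cap\widetilde W_1|}^2$ and finishes the proof in one line using the first-moment bound. Your plan instead replaces the hitting probabilities by the pointwise bound $C/(\|x\|^2\|x-y\|^2)+C/(\|y\|^2\|x-y\|^2)$ and integrates. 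This can be made to work, but note that this pointwise bound has no $t$-dependence, so the integral over all of $\R^4\times\R^4$ diverges and the cutoff at scale $\sqrt t$ is doing essential work. In particular, in the mixed region $\|x\|\gg\sqrt t$, $\|y\|\lesssim\sqrt t$, the confinement bound for $x$ alone does not kill the $dy$-integral; you need to combine it with decay in $y$, e.g.\ via $\pr{A\cap B}^2\le\pr{A}\pr{B}$ applied with $A=\{H_{\B(x,4)}\le t\}$, $B=\{H_{\B(y,4)}\le t\}$, and take the cutoff radius as $\sqrt{\alpha t\log t}$ with $\alpha$ large relative to the constant in \eqref{confinment1}. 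So your route works but costs a page of case analysis; the paper's identification of the inner integral with $\E{|W_1\cap\widetilde W_1|}$ avoids all of it.
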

\begin{proof}[\bf Proof] We start with inequality \reff{cap-vol1}. 
Let $(\mathcal B(x_i,4/3),\ i\le M)$ be a finite covering 
of $W_1[0,t]$ by open balls of radius $4/3$ 
whose centers are all assumed to belong to $\beta[0,t]$, 
the trace of the Brownian motion driving $W_1[0,t]$. 
Then, by removing one by one some balls if necessary, 
one can obtain a sequence of disjoint balls $\B(x_i,4/3, i\le M')$, 
with $M'\le M$, 
such that the enlarged balls $(\mathcal B(x_i,4),\ i\le M')$ 
still cover $W_1[0,t]$. Since the capacity is subadditive, one has on one hand
$$\cp(W_1[0,t])\ \le \ M' \cdot \cp(\B(0,4)),$$
and on the other hand since the balls $\B(x_i,4/3)$ 
are disjoint and are all contained in $W_{4/3}[0,t]$,  
$$
M'| \B(0,4/3)|\ \le\ |W_{4/3}[0,t]|.
$$
Inequality \reff{cap-vol1} follows. Inequality \reff{cap-vol2} is similar:
start with $(\mathcal B(x_i,4/3),\ i\le M)$ a
finite covering of $W_1[0,t]\cap \widetilde W_1[0,t]$ by balls of 
radius one whose centers are all assumed to belong to $\beta[0,t]$.  
Then, by removing one by one some balls if necessary,
one obtain a sequence of disjoint balls $(\mathcal B(x_i,4/3))_{i\le M'}$, such that the enlarged balls $(\mathcal B(x_i,4))_{i\le M'}$ 
cover the set $W_1[0,t]\cap \widetilde W_1[0,t]$, and such that all of them intersect $W_1[0,t]\cap \widetilde W_1[0,t]$. 
But since the centers $(x_i)$ also belong to $\beta[0,t]$, all the balls $\mathcal B(x_i,4/3)$ belong to the enlarged 
intersection $W_4[0,t]\cap \widetilde W_4[0,t]$. 
So as before one has on one hand 
$$
\cp(W_1[0,t]\cap \widetilde W_1[0,t])\ 
\le \ M' \cdot \cp(\mathcal B(0,4)),
$$
and on the other hand 
$$
|W_4[0,t]\cap \widetilde W_4[0,t]|\ \ge \ M'|\mathcal B(0,4/3)|.
$$
We now prove \reff{cap-vol3}. We start with a first moment bound 
(see \cite{Get} for more precise asymptotics):  
\begin{equation}\label{getoor-1}
\E{|W_1[0,t]\cap \widetilde W_1[0,t]|}\le C \log t. 
\end{equation}
This estimate is easily obtained: indeed by definition
\begin{equation}\label{getoor-2}
\E{|W_1[0,t]\cap \widetilde W_1[0,t]|}=\int_{\R^4} 
\bP\left(H_{\B(z,1)}<t\right)^2\, dz, 
\end{equation}
and then \eqref{getoor-1} follows from \eqref{hit.ball} and \reff{confinment1}. 
For the second moment, we write similarly 
\begin{align}\label{eq:initiale}
	\E{|W_1[0,t]\cap \til{W}_1[0,t]|^2} = \int_{\R^4}\int_{\R^4} \pr{H_{\B(z,1)}<t, H_{\B(z',1)}<t}^2\,dz\,dz'.
\end{align}
We now have 
\begin{align*}
	\pr{H_{\B(z,1)}<t, H_{\B(z',1)}<t} = \pr{H_{\B(z,1)}< H_{\B(z',1)}<t} + \pr{H_{\B(z',1)}< H_{\B(z,1)}<t},
\end{align*}
and hence taking the square on both sides gives
\begin{align}\label{eq:square}
	\pr{H_{\B(z,1)}<t, H_{\B(z',1)}<t}^2 \leq  2\pr{H_{\B(z,1)}< H_{\B(z',1)}<t}^2 + 2 \pr{H_{\B(z',1)}< H_{\B(z,1)}<t}^2.
\end{align}
Let $\nu_z$ denote the hitting distribution of the ball $\B(z,1)$ by a Brownian motion starting from~$0$. Then by the strong Markov property we get
\begin{align*}
	\pr{H_{\B(z,1)}< H_{\B(z',1)}<t}\ \leq \ \pr{H_{\B(z,1)}<t}\, \prstart{H_{\B(z',1)}<t}{\nu_z}.
\end{align*}
Substituting this and~\eqref{eq:square} into~\eqref{eq:initiale} gives
\begin{align*}
	\E{|W_1[0,t]\cap \til{W}_1[0,t]|^2}\ \leq\ 4 \int_{\R^4}\int_{\R^4} \pr{H_{\B(z,1)}<t}^2\prstart{H_{\B(z',1)}<t}{\nu_z}^2\,dz\,dz'.
\end{align*}
Using~\eqref{getoor-2} we now obtain for all $z$, 
\begin{align*}
	\int_{\R^4} \prstart{H_{\B(z',1)}<t}{\nu_z}^2\,dz' = \E{|W_1[0,t]\cap \til{W}_1[0,t]|}.
\end{align*}
This together with~\eqref{getoor-1} implies
\begin{align*}
	\E{|W_1[0,t]\cap \til{W}_1[0,t]|^2} \ \leq \ 4\E{|W_1[0,t]\cap \til{W}_1[0,t]|}^2 \ \leq\  4(C \log t)^2,
\end{align*}
and concludes the proof of the lemma. 
\end{proof}

%%%%%%%%%%%%%%%%%%%%%%%%%%%%%%%%%%%%%%%%%%%%%%%%%%%%%%%%%%%%%%%%%%%

\section{On the Expected Capacity}\label{Sec.expcap}
\subsection{Statement of the result and sketch of proof}
The principal result of this section gives the precise asymptotics 
for the expected capacity.

\begin{proposition}\label{prop-expected} 
In dimension four, and for any radius $r>0$, we have
\begin{align}\label{asymp-expected}
\lim_{t\to\infty}\  \frac{\log t}{t}\, \E{\cc{W_r[0,t]}}\ =\ 4\pi^2.
\end{align}
\end{proposition}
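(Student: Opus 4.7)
The plan is to use the representation formula (1.4) to turn the expected capacity into the asymptotic intersection probability between two independent Wiener sausages, and then to compute that intersection probability by a multi-scale decomposition of the second Brownian motion's path. By Brownian scaling, it suffices to treat $r=1$. First, I would fix a radius $R_t = \sqrt{t}\,(\log t)^2$, large enough that $W_1[0,t]\subset \mathcal{B}(0,R_t)$ with probability at least $1-\exp(-c(\log t)^4)$ by~\eqref{confinment1}. On this event, formula~\eqref{cap.formula} applies and yields
\[
\mathrm{Cap}(W_1[0,t]) = 2\pi^2 R_t^{\,2}\int_{\partial \mathcal{B}(0,R_t)} \prstart{H_{W_1[0,t]}<\infty}{z}\,d\lambda_{R_t}(z).
\]
On the complementary (bad) event, bound $\mathrm{Cap}(W_1[0,t])\le C_1\,|W_{4/3}[0,t]|$ via Lemma~\ref{lem.epsilon} and use Getoor-type first moment bounds together with Cauchy--Schwarz to ensure this contribution is $o(t/\log t)$. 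Taking expectation and using Fubini, the problem reduces to showing
\[
\mathcal I(t) := \mathbb E\!\left[\frac{1}{|\partial \mathcal{B}(0,R_t)|}\int_{\partial \mathcal{B}(0,R_t)} \prstart{H_{W_1[0,t]}<\infty}{z}\,dz\right]\ \sim\ \frac{2\,t}{R_t^{\,2}\log t}.
\]

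The core computation of $\mathcal I(t)$ is a scale-by-scale analysis of the second Brownian motion $\widetilde\beta$ launched from $\partial \mathcal B(0,R_t)$. The picture is that $W_1[0,t]$ typically lives at scale $\sqrt t$, and $\widetilde\beta$ reaches that scale only after an excursion traversing nested spheres $\partial \mathcal B(0,2^k\sqrt t)$ for $k=0,1,\dots,K$ with $K\asymp \log(R_t/\sqrt t)$. At each intermediate scale, the strong Markov property together with the hitting formula~\eqref{hit.ball} produces a factor $2^{-2}$ for going down one scale, balanced by the uniformization of the entry distribution. Once inside $\mathcal B(0,\sqrt t)$, the probability of hitting the sausage before exiting is, at leading order, governed by a single ``effective per-scale'' constant, and summing the geometric return-and-hit series over $O(\log t)$ scales produces the $1/\log t$ factor. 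The identification of the constant $2$ (hence $4\pi^2$ in the statement) would come from carefully matching the total contribution against the Green's function normalization $G(z)=1/(2\pi^2\|z\|^2)$, with the factor of $2$ arising from the two symmetric roles ``$\widetilde\beta$ hits $W$'' and ``$\beta$ hits $\widetilde W$'' in~\eqref{exp-cap}.

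I expect the main obstacle to be the matching lower bound on the per-scale hitting probability, which cannot be read off from Lemma~\ref{cond.hit} (a pure upper bound) and requires a second-moment/Paley--Zygmund argument: one introduces the ``time spent in the sausage'' $N_t = \int_0^\infty \mathbf 1\{\widetilde\beta_s\in W_1[0,t]\}\,ds$, computes $\mathbb E[N_t]$ by direct Green function calculation, and controls $\mathbb E[N_t^2]$ using the fact that a single typical excursion of $\widetilde\beta$ does not revisit the sausage too often---this is precisely the type of three-Brownian-motion estimate proved in Proposition~\ref{lem.hit.3}. The upper bound $\limsup \frac{\log t}{t}\mathbb E[\mathrm{Cap}(W_r[0,t])]\le 4\pi^2$ should follow from the iterated strong Markov decomposition together with Lemma~\ref{cond.hit} and a Bonferroni expansion, controlling double counting using the cross-term $\chi_r$ from the decomposition~\eqref{decomposition.cap}. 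Combining the two bounds gives the claimed limit.
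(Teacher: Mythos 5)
Your starting point coincides with the paper's: reduce $\E{\cp(W_1[0,t])}$ to the asymptotic intersection probability of $W_1[0,t]$ with a second Brownian motion started far away, and introduce the occupation variable $N_t=\int_0^\infty \mathbf 1\{\widetilde\beta_s\in W_1[0,t]\}\,ds$ (the paper's $R[0,t]$). The exact identity underlying both approaches is $\prstart{N_t>0}{0,z}=\estart{N_t}{0,z}/\escond{N_t}{N_t>0}{0,z}$, and indeed $\estart{N_t}{0,z}/G(0,z)\to \frac{\pi^2}{2}t$ is a clean Green's-function computation (Lemma~\ref{lem-ER}).

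The gap is in your treatment of the denominator. You propose a Paley--Zygmund / second-moment argument for the lower bound. That method gives $\prstart{N_t>0}{0,z}\ge \estart{N_t}{0,z}^2/\estart{N_t^2}{0,z}$, which yields only a lower bound of the correct \emph{order}, not the sharp constant $4\pi^2$: the ratio is an equality only when $N_t$ conditioned on positivity is (exactly, not just up to constants) concentrated, which is itself the content one must prove. To get the sharp constant, one has to show that the conditional expectation $\escond{N_t}{\text{hit}}{0,z}$ equals $(1+o(1))\,\E{D_0[0,t]}\sim \frac{1}{8}\log t$. This is precisely where the genuine difficulty lies, and your proposal does not address it: the time $\sigma$ at which $\beta$ was near $\widetilde\beta(\tau)$ is \emph{not} a stopping time for $\beta$ (because $\tau$ depends on the entire sausage $W_1[0,t]$), so after the first hit one cannot treat the future of $\beta$ as fresh. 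The paper explicitly flags this as the mistake Erd\H{o}s--Taylor made and Lawler repaired; handling it occupies most of Section~\ref{Subsec:proof} via the good/bad-$\sigma$ decomposition, the concentration estimate of Lemma~\ref{LD.D0}, and the summation argument in \eqref{bad-1} and \eqref{bad-9} that decouples the event $\{\sigma=i\}$ from $\{i\ \text{bad}\}$ using Lemma~\ref{lem:infint}. None of that machinery, or a substitute for it, appears in your outline.

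Two smaller points. First, the multi-scale decomposition through nested spheres $\partial\B(0,2^k\sqrt t)$ with a ``per-scale return-and-hit'' geometric series does not correspond to anything the proof actually needs, and as sketched it would not identify the constant: the $\log t$ in the denominator comes from $\E{D_0[0,t]}\sim\frac{1}{8}\log t$ (Lemma~\ref{lem-D0}), not from summing over spatial scales of $\widetilde\beta$. Second, your explanation that the factor $2$ comes from ``the two symmetric roles $\widetilde\beta$ hits $W$ and $\beta$ hits $\widetilde W$'' is incorrect; the constant $4\pi^2$ is $(\pi^2/2)/(1/8)$, the ratio of the numerator constant to the per-hit occupation $\E{D_0[0,t]}/\log t$. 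For the upper bound, ``iterated strong Markov plus Bonferroni'' is too vague to evaluate: the same non-stopping-time obstruction appears, and the paper's solution (work on $[0,t(1+\varepsilon)]$, redefine good $\sigma$ with $\inf$ instead of $\sup$, bound the bad-$\sigma$ contribution via \eqref{bad-9}) is not a Bonferroni expansion.
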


\begin{remark}{\em
The scale invariance of Brownian motion yields in dimension four, for any $r>0$, 
\[
\E{\cc{W_r[0,t]}}\ =\ r^2\, \E{\cc{W_1[0,t/r^2]}}.
\]
Thus, it is enough to prove \reff{asymp-expected} for $r=1$.
}
\end{remark}

The proof is based on an idea of 
Lawler~\cite{Law82} used in the random walk setting. This idea
exploits the fact that the conditional expectation 
of the number of times when 
two random walks meet, conditionally on one of them, is concentrated. 
Before giving the proof, let us explain its rough ideas.

We now give an overview of the proof by introducing the necessary key notation and definitions. We start by discretising the 
Brownian motion~$\beta$ driving $W_1$. 
For fixed $\delta>0$, we record the times
and positions at which $\beta$ leaves successive balls of radius $\delta$.
More precisely, let $\tau_0^\delta=0$,
and by way of induction when $\tau^\delta_i<\infty$, let $Z_i^\delta=\beta(\tau^\delta_i)$ and 
\[
\tau^\delta_{i+1}=\inf\{s>\tau^\delta_i\, :\, \beta_s \notin \B(Z_i^\delta,\delta) \}.
\]
It follows from \eqref{confinment2} that for any $i$, the stopping time $\tau^\delta_{i}$ is almost surely finite.
Then, we define the Wiener sausage associated to the discrete positions by   
\[
W_r^\delta[0,t]:=\bigcup_{i: \, \tau^\delta_i\leq t} \B(Z^\delta_i,r), \quad \text{for all } r\ge 0.
\]
The continuity of the Brownian path implies that  
$\|Z^\delta_i-Z^\delta_{i-1}\|=\delta$, almost surely for all $i\ge 1$. Therefore, one has for all $\delta>0$, 
\begin{eqnarray}\label{cap.inclusion}
W_1^\delta[0,t]\, \subseteq \, W_1[0,t]\,  \subseteq\,  W_{1+\delta}^\delta[0,t]. 
\end{eqnarray}
Moreover, the following scaling relation holds in law,
\begin{eqnarray*}\label{scaling-key}
\cc{W_{1+\delta}^\delta[0,t]}\ \stackrel{\text{(law)}}{=}\ 
(1+\delta)^{d-2} \, 
\cc{W_{1}^{\frac{\delta}{1+\delta}}\left[0,\frac{t}{(1+\delta)^2}\right]}.
\end{eqnarray*}
Thus, using that the capacity is monotone 
for inclusion, it is enough to obtain asymptotics 
for the expected capacity of $W_1^\delta[0,t]$, 
and then let $\delta$ go to zero.

The next step is to cast the expected capacity of $W_1^\delta[0,t]$ into a probability of non-intersection of this discretised  Wiener sausage by another Brownian motion $\til \beta$, starting from infinity. More precisely we will show below that 
\begin{align}\label{def-exp}
\E{\cc{W_1^\delta[0,t]}}\, =\, \lim_{\|z\|\to\infty}\, \frac{1}{G(0,z)}\cdot
\prstart{W_1^\delta[0,t]\cap\til \beta[0,\infty)\not= \emptyset}{0,z},
\end{align}
which should not come as a surprise, 
since this formula holds for deterministic sets \eqref{cap.def} (but one still need to justify the interchange of limit and expectation). 
We next introduce the following stopping time
\begin{equation}\label{def.tau}
\tau\ =\ \inf\{s\ge 0\ : \ \til \beta_s \in W_1^\delta[0,t]\},
\end{equation}
and note that the probability on the right-hand side 
of \eqref{def-exp} is just the probability of $\tau$ being finite.

Then we introduce a counting measure of the pairs of times at which 
the two trajectories come within distance $1$ 
\begin{align}\label{def-Rdelta}
R^\delta[0,t]= \sum_{i\ge 0}
\big(\tau^\delta_{i+1}\wedge t-\tau^\delta_{i}\wedge t\big)
\int_0^\infty \1(\|\til \beta_s-Z_{i}^\delta\|\le 1)\, ds.
\end{align}
Observe that $\tau$ is finite, if and only if, $R^\delta[0,t]$ is nonzero.
Therefore the following equality holds
\begin{align}\label{exp-5}
\prstart{\tau<\infty}{0,z}\ =\ \frac{\estart{R^\delta[0,t]}{0,z}}
{\estart{R^\delta[0,t] \mid  \tau<\infty}{0,z}}.
\end{align}
The estimate of the numerator in \eqref{exp-5} is established by 
comparing $R^\delta[0,t]$ to a continuous counterpart $R[0,t]$, whose expectation can be computed explicitly and which is defined via    
\begin{align}\label{def-R}
R[0,t]\, =\, \int_0^\infty ds\int_0^t
\, \1(\|\til \beta_s-\beta_u\| \le 1)\, du .
\end{align}
More precisely we prove in Lemma \ref{lem-ER} 
below (see Subsection \ref{Subsec:proof}) that for all $t>0$, 
\begin{eqnarray}\label{exp-R}
\lim_{\|z\|\to\infty}\, \frac{\estart{R[0,t]}{0,z} }{G(0,z)} 
\ =\ \frac{\pi^2}{2} t,
\end{eqnarray}
The same limit holds for $R^\delta[0,t]$, 
up to some additional $\mathcal O(1)$ term. 
The estimate of the denominator in \eqref{exp-5} is more intricate. 
Consider the random time
\begin{align}\label{time-sigma}
\sigma\, =\, \inf\left\{ i\geq 0\, : \, \|\til{\beta}(\tau)-Z_i^\delta\| \leq
 1\right\}.
\end{align}
A key observation is that $\sigma$ is not a stopping time (with respect to any natural filtration), since~$\tau$  
depends on the whole Wiener sausage $W_1^\delta[0,t]$. In other words  conditionally on $\tau$ and $\sigma$, one cannot consider the two parts of the 
trajectories of $\til \beta$ and $W_1^\delta$ after the times $\tau$ and $\sigma$ respectively, as being 
independent
\footnote{a mistake 
that Erd\"os and Taylor implicitly made in their pioneering work \cite{ErdosTaylor}, and that Lawler corrected about twenty years later \cite{Law82}.}.

To overcome this difficulty, the main idea (following Lawler) is to use that $\mathbb E[R^\delta[0,t]\mid (\beta_s)_{s\le t}]$ is  concentrated around its mean value, which is of order $\log t$.  
As a consequence, even if the trajectory of $\beta$ after time $\tau_\sigma^\delta$ 
is not independent of $\widetilde \beta[\tau, \infty)$, we still have that 
$\mathbb E_{0,z}[R^\delta\mid \beta]$ estimated in the time
period $[\tau_\sigma^\delta,t]$,
is close to its mean value for typical values of~$\sigma$. 
Another difficulty then is to control the probability for $\sigma$ to be typical with this respect (what Lawler calls a {\it good} $\sigma$), and the solution is inspired by another nice argument of Lawler. But we refer to the proof below for more details.

However, there are some small additional issues here. 
Unlike in the discrete case, $Z_\sigma$ and $\til \beta(\tau)$ 
are not equal.
In particular $\mathbb E_{0,z}[R^\delta[0,t]\mid \beta,\, (\til \beta_s)_{s\le \tau}]$ 
is not distributed as $\mathbb E_{0,0}[R^\delta[0,t-\tau_\sigma^\delta]\mid \beta]$, but as $\mathbb E_{0,x}[R^\delta[0,t-\tau_\sigma^\delta]\mid \beta]$, with $x= \til \beta(\tau)-Z_\sigma$, which is nonzero. However, since we still have that $\|x\|=1$, one can compare this expectation, with the one when $x=0$ and show that their difference is negligible.  

Another point is that the argument described above requires $\log t$ and $\log (t-\tau_\sigma^\delta)$ to be equivalent, at least when we look for an upper bound of the probability that $\tau$ is finite. 
A simple way to overcome this difficulty is to work with a longer period, 
and use instead of \eqref{exp-5} the inequality
$$
\prstart{\tau<\infty}{0,z}\ \le \ 
\frac{\estart{R^\delta[0,t(1+\varepsilon)]}{0,z}}
{\estart{R^\delta[0,t(1+\varepsilon)] \mid  \tau<\infty}{0,z}},
$$
which holds for any fixed positive $\epsilon$, 
and then let $\epsilon$ go to zero. 
This concludes our overview of the proof,
which now starts.

\subsection{Proof of Proposition~\ref{prop-expected}}
\label{Subsec:proof}

The first thing is to prove \eqref{def-exp}. 
For any real $\rho>0$, with $d\lambda_\rho$
denoting the uniform probability measure on the boundary of $\B(0,\rho)$,
we have shown in~\eqref{cap.formula} that
\begin{eqnarray*}
\cc{W_1[0,t]\cap \B(0,\rho)}=\frac{1}{G(0,2\rho)}\int_{\partial \B(0,2\rho)}
\prstart{W_1[0,t]\cap \B(0,\rho)\cap\til \beta[0,\infty)
\not= \emptyset\ \big|\ W_1[0,t]}{0,z}\, d\lambda_{2\rho} (z).
\end{eqnarray*}
Taking expectation on both sides we obtain
\begin{eqnarray*}
\E{\cc{W_1[0,t]\cap \B(0,\rho)}}=\frac{1}{G(0,2\rho)}\int
\prstart{W_1[0,t]\cap \B(0,\rho)\cap\til \beta[0,\infty)
\not= \emptyset}{0,z} \, d\lambda_{2\rho} (z).
\end{eqnarray*}
By rotational invariance of $\beta$ and $\til{\beta}$, we get that the probability appearing in the integral above is the same for all $z\in \partial \B(0,2\rho)$. Writing $2\rho=(2\rho,0,\ldots,0)$ we get
\begin{align*}
	&\E{\cc{W_1[0,t]\cap \B(0,\rho)}}=\frac{1}{G(0,2\rho)}
\prstart{W_1[0,t]\cap \B(0,\rho)\cap\til \beta[0,\infty) 
\not= \emptyset}{0,2\rho} \\&=
\frac{1}{G(0,2\rho)}
\prstart{W_1[0,t]\cap\til \beta[0,\infty) 
\not= \emptyset}{0,2\rho} + \mathcal O\Big(\frac{\pr{W_1[0,t]\cap \B^c(0,\rho)
\not= \emptyset}}{G(0,2\rho)}\Big).
\end{align*} 
Using that the $\mathcal O$ term appearing above tends to $0$ as $\rho\to\infty$ and invoking monotone convergence proves~\eqref{def-exp}.

Now in view of \eqref{exp-5} we need to estimate the expectation of $R^\delta[0,t]$ conditionally on $\tau$ being finite. 
To this end, we take the expectation conditionally on $(\beta_s)_{s\ge 0}$ and introduce the following random variables: 
$$
D_x[0,t]=\estart{R[0,t]\mid \beta}{0,x}  \quad \text{and}\quad D_x^\delta[0,t]=\estart{R^\delta[0,t]\mid \beta}{0,x}.
$$
Note that if we set
$$
G^*(x,y)=\int_{\B(y,1)}\!\! G(x,z)\, dz  =G^*(0,y-x),
$$
then,
\begin{eqnarray}\label{Dx}
D_x[0,t] =\int_{\R^4}\, dy\int_0^t\,  G(x,y)
\1( \|y-\beta_s\|\le 1)\, ds =\int_0^t G^*(x,\beta_s) \, ds, 
\end{eqnarray}
and 
\begin{eqnarray}
\label{Dxdelta}
D_x^\delta[0,t]\, =\, \sum_{i\ge 0} (\tau^\delta_{i+1}\wedge t-\tau^\delta_i\wedge t)\,  
G^*(x,Z^\delta_i).
\end{eqnarray}
To simplify notation, we write 
$$d(t) = \mathbb E[D_0[0,t]],$$ 
since this quantity will play an important role in the rest of the proof. 
Now before we continue with the proof we gather here some technical results that will be needed.

The first result contains \eqref{exp-R} and also provides an estimate of the difference between $R^\delta[0,t]$ and~$R[0,t]$. 
\begin{lemma}\label{lem-ER} 
There exists a constant $C>0$, such that for all $\delta\le 1$, $z\neq 0$ and $t\ge 1$, 
\[
\left|\frac{\estart{R^\delta[0,t]}{0,z}}{G(0,z)}-\frac{\estart{R[0,t]}{0,z}}{G(0,z)}\right|\ \le\ C\, \left( 1+ \frac{t}{\norm{z}} + \norm{z}e^{-\frac{\norm{z}^2}{8t}}\right).
\]
Moreover, for all $t>0$, 
\begin{align*}
\lim_{\|z\|\to\infty}\ \frac{\estart{R[0,t]}{0,z} }{G(0,z)} \ =\ \frac{\pi^2}{2}\,  t.
\end{align*}
\end{lemma}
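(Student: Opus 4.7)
My plan is to treat the two assertions separately.

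\emph{Limit identity.} I would start by applying Fubini (conditioning on $\beta$ and integrating out $\til\beta$ first):
\[
\estart{R[0,t]}{0,z}=\E{\int_0^t G^*(z,\beta_u)\,du},\qquad G^*(x,y):=\int_{\B(y,1)}G(x-w)\,dw.
\]
Writing $G^*(z,y)=\int_{\B(0,1)}G(z-y-v)\,dv$, for each fixed $y,v$ the ratio $G(z-y-v)/G(0,z)$ tends to $1$ as $\|z\|\to\infty$, hence $G^*(z,\beta_u)/G(0,z)\to|\B(0,1)|=\pi^2/2$ (the Lebesgue volume of the unit ball in $\R^4$). The uniform bound $G^*\le G^*(0,0)=1/2$ together with~\eqref{confinment1} to control the event $\{\|\beta_u\|>\|z\|/2\}$ provides the uniform integrability needed for dominated convergence, which gives the limit $(\pi^2/2)t$.

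\emph{The difference.} By the same Fubini argument, $\estart{R^\delta[0,t]-R[0,t]}{0,z}$ equals
\[
\E{\sum_i\int_{\tau_i^\delta\wedge t}^{\tau_{i+1}^\delta\wedge t}\bigl[G^*(z,Z_i^\delta)-G^*(z,\beta_u)\bigr]\,du}.
\]
Since $\|\beta_u-Z_i^\delta\|\le\delta$ on each interval, the task reduces to a pointwise comparison of $G^*$ at nearby points. A direct Taylor expansion of $G(z-y-v)$ in $v\in\B(0,1)$ gives $|\nabla_y G^*(z,y)|\le C\|z-y\|^{-3}$ for $\|z-y\|\ge 2$, hence
\[
|G^*(z,y_1)-G^*(z,y_2)|\le C\delta\,\|z-y_1\|^{-3}\qquad(\|y_1-y_2\|\le\delta\le 1,\ \|z-y_i\|\ge 2);
\]
near $z$ one uses instead the universal bound $G^*\le 1/2$.

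\emph{Assembling the pieces.} I split $\beta$'s trajectory according to whether $\|\beta_u-z\|\ge 2$ or not. The far contribution is
\[
C\delta\,\E{\int_0^t\|z-\beta_u\|^{-3}\,\1(\|z-\beta_u\|\ge 2)\,du}\le C\delta\,\frac{t}{\|z\|^3},
\]
using $\|z-\beta_u\|\ge\|z\|/2$ on the typical event $\{\|\beta_u\|\le\|z\|/2\}$; dividing by $G(0,z)\asymp\|z\|^{-2}$ yields the $t/\|z\|$ term. The near contribution is bounded by the expected occupation time $\int_{\B(z,2)}G_t(y)\,dy$; the exact heat-kernel identity $G_t(y)=G(y)\,e^{-\|y\|^2/(2t)}$ makes this of order $\|z\|^{-2}(1\wedge e^{-c\|z\|^2/t})$, which after dividing by $G(0,z)$ produces the $1+\|z\|\exp(-\|z\|^2/(8t))$ contribution (the $\|z\|$-prefactor being a generous absorption of $\|z\|^2\,e^{-c\|z\|^2/t}$). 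The rare event $\{\sup_{s\le t}\|\beta_s\|>\|z\|/2\}$ is handled separately using~\eqref{confinment1} and absorbed into the Gaussian term.

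\textbf{The main obstacle} will be matching the sharp gradient estimate (valid only far from $z$) with the coarse bound (needed near~$z$), particularly when $\|z\|$ is comparable to $\sqrt t$, and verifying uniformity in $\delta\in(0,1]$. The $t/\|z\|$ term arises from accumulating the first-order Taylor error along the Brownian path, while the Gaussian term encodes the exponentially small chance for $\beta$ to visit a unit neighbourhood of $z$ within time $t$.
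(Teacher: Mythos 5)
Your overall strategy matches the paper's: reduce via the gradient bound $|\nabla G^*|\lesssim \|\cdot\|^{-3}$ to bounding $\E{\int_0^t\frac{\|z\|^2}{\|z-\beta_u\|^3\vee 1}\,du}$, apply Fubini, and exploit the exact identity $G_t(x)=\frac{e^{-\|x\|^2/2t}}{2\pi^2\|x\|^2}$. The limit identity is also essentially the paper's argument. However, your plan for bounding the resulting integral has a genuine gap.

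The problem is your near/far split at the fixed scale $2$ and your treatment of the ``atypical'' region. Once you write the Fubini integral $\int_{\R^4}\frac{\|z\|^2}{\|z-y\|^3\vee 1}G_t(y)\,dy$, the term $\|z\|e^{-\|z\|^2/(8t)}$ does not come from the small ball $\B(z,2)$: that ball contributes only $O\!\left(e^{-c\|z\|^2/t}\right)$ after division by $G(0,z)$, as your own estimate $\|z\|^2\int_{\B(z,2)}G_t\asymp e^{-\|z\|^2/(2t)}$ shows. The dominant non-$t/\|z\|$ contribution comes from the intermediate shell $\{2\le\|z-y\|\le\|z\|/2\}$, a region of radius $\asymp\|z\|$ around $z$, where $\|y\|\ge\|z\|/2$. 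In your scheme that shell lies in the ``far'' region but inside the rare event $\{\|\beta_u\|>\|z\|/2\}$, and you propose to handle it by bounding $\pr{\sup_{s\le t}\|\beta_s\|>\|z\|/2}$ via~\eqref{confinment1} and multiplying by a universal bound on the integrand. This loses the spatial decay of the integrand and produces at best $t\|z\|^2 e^{-c\|z\|^2/(4t)}$ after dividing by $G(0,z)$; when $\|z\|^2\asymp t$ this is of order $t^2$, far exceeding the target $C\big(1+t/\|z\|+\|z\|e^{-\|z\|^2/(8t)}\big)\asymp\sqrt{t}$. The correct route (which is what the paper's ``direct calculations'' do) is to keep the Fubini form and bound $G_t(y)\le \frac{4e^{-\|z\|^2/(8t)}}{2\pi^2\|z\|^2}$ pointwise on the shell, then integrate $\|z-y\|^{-3}$ over $\{2\le\|z-y\|\le\|z\|/2\}$ to get $O(\|z\|)$, yielding $O\!\left(\|z\|e^{-\|z\|^2/(8t)}\right)$ after dividing by $G(0,z)$. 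In short: you need the occupation-density bound $G_t(y)$ on the shell, not the sup-type confinement estimate~\eqref{confinment1}; the latter is the wrong tool here and the gap is not repairable within the plan as stated.
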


The second result deals with the first and second moments of $D_0[0,t]$. 
\begin{lemma}\label{lem-D0}
One has
$$\lim_{t\to \infty} \ \frac 1{\log t} \, \mathbb E[D_0[0,t]]\ = \ \frac 18,$$
and there exists a constant $C>0$, such that for all $t\ge 2$, 
\begin{align}\label{2moment-D}
\E{D_0[0,t]^2}\ \le\  C \, (\log t)^2.
\end{align}
\end{lemma}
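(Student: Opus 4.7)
I handle the two claims separately. For the first, Fubini gives $\E{D_0[0,t]} = \int_0^t \E{G^*(0,\beta_s)}\,ds$. Writing $G^*(0,z) = \int_{\B(0,1)} G(0,z+\xi)\,d\xi$ by the change of variables $w\mapsto z+\xi$, and using the semigroup identity $\int p_s(0,y)p_w(0,y+\xi)\,dy = p_{s+w}(0,\xi)$, Fubini yields
\[
\E{G^*(0,\beta_s)} = \int_{\B(0,1)}\int_s^\infty p_w(0,\xi)\,dw\,d\xi = \int_{\B(0,1)} \frac{1-e^{-\|\xi\|^2/(2s)}}{2\pi^2 \|\xi\|^2}\,d\xi,
\]
after the explicit evaluation $\int_s^\infty p_w(0,\xi)\,dw = (1-e^{-\|\xi\|^2/(2s)})/(2\pi^2\|\xi\|^2)$. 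Passing to polar coordinates in $\R^4$ (so $d\xi = 2\pi^2 r^3\,dr$) reduces this to $s\bigl[e^{-1/(2s)} + 1/(2s) - 1\bigr]$, which equals $\frac{1}{8s} + O(s^{-2})$ as $s\to\infty$, using $|\B(0,1)| = \pi^2/2$. Integrating over $s\in[1,t]$ gives $\E{D_0[0,t]} = (\log t)/8 + O(1)$, hence the first assertion.

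For the second-moment bound, write
\[
\E{D_0[0,t]^2} = 2\int_0^t\int_0^{s'}\E{G^*(0,\beta_s)\,G^*(0,\beta_{s'})}\,ds\,ds',
\]
and apply the Markov property at time $s$: the inner expectation becomes $\E{G^*(0,\beta_s)\,h(\beta_s, s'-s)}$ with $h(y,u):=\estart{G^*(0,\beta_u)}{y}$. The crucial step is the uniform-in-$y$ bound $h(y,u)\le 1/(8u)$ for all $y\in \R^4$ and $u>0$. A calculation analogous to the one above, with the starting point $0$ replaced by $y$, gives
\[
h(y,u) = \int_{\B(0,1)}\frac{1-e^{-\|y+\xi\|^2/(2u)}}{2\pi^2\|y+\xi\|^2}\,d\xi,
\]
and the elementary inequality $(1-e^{-a})/a\le 1$, applied with $a = \|y+\xi\|^2/(2u)$, bounds the integrand pointwise by $1/(4\pi^2 u)$, which integrates to $|\B(0,1)|/(4\pi^2 u) = 1/(8u)$.

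Combining with the trivial bound $h(y,u)\le G^*(0,0) < \infty$ gives $h(y,u)\le C/(1+u)$ uniformly in $y$; similarly $\E{G^*(0,\beta_s)} = h(0,s)\le C/(1+s)$. Substituting these into the double integral,
\[
\E{D_0[0,t]^2}\ \le\ 2C^2\int_0^t\int_0^{t-s}\frac{dr\,ds}{(1+s)(1+r)}\ \le\ C'(\log t)^2,
\]
which is the required estimate. The main obstacle is the uniform bound on $h(y,u)$: although $G^*(0,\cdot)$ is unbounded near the origin and its spatial supremum is not decaying in any natural sense of $u$, the sharp $1/u$ decay of $h(y,u)$ holds \emph{independently of the starting point $y$} thanks to the simple pointwise inequality $(1-e^{-a})/a\le 1$, which precisely encodes how the diffusion time $u$ smooths out the four-dimensional Newtonian singularity.
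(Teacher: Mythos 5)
Your first-moment computation is correct and essentially parallels the paper's, just organized differently: the paper integrates out $\til\beta$ first and invokes the mean-value (harmonicity) property of $G$ to pass from $G^*$ to $G$, then uses Fubini, whereas you apply the semigroup identity $\int p_s(0,y)p_w(0,y+\xi)\,dy = p_{s+w}(0,\xi)$ to obtain the closed form $\E{G^*(0,\beta_s)} = \frac12 - s(1-e^{-1/(2s)}) = \frac{1}{8s}+O(s^{-2})$; both routes give $\E{D_0[0,t]} = \tfrac{\log t}{8}+O(1)$.

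Your second-moment argument is genuinely different. After the same Markov-property reduction to $\E{G^*(0,\beta_s)h(\beta_s, s'-s)}$, the paper bounds the inner factor by $\sup_z \E{D_z[0,t]}$ and then shows $\E{D_z[0,t]}\le\E{D_0[0,t]}$ via the pointwise monotonicity $\mathbb P(\|\beta_u - z\|\le 1)\le\mathbb P(\|\beta_u\|\le 1)$, yielding the clean inequality $\E{D_0[0,t]^2}\le 2\E{D_0[0,t]}^2$. You instead establish the explicit, $y$-uniform decay $h(y,u)\le\frac{1}{8u}$ from the elementary bound $(1-e^{-a})/a\le 1$ (together with $h\le G^*(0,0)$ near $u=0$), and then directly integrate $\int_0^t\int_0^{t-s}\frac{dr\,ds}{(1+s)(1+r)} = O((\log t)^2)$. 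The paper's route is more structural and gives the sharper constant $\E{D_0^2}\le 2\E{D_0}^2$; yours is a more hands-on analytic computation that bypasses the comparison $\E{D_z}\le\E{D_0}$ entirely and has the added virtue of making explicit the uniform temporal decay of the one-step expectation. Both are correct and complete proofs of the stated lemma.
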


The third result shows that $D_x^\delta[0,t]$ is uniformly close to $D_0[0,t]$, when $\|x\|$ is smaller than one: 

\begin{lemma}\label{lem-D} Let   
\begin{align}\label{def-zeta}
\zeta=\int_0^\infty \frac{1}{\norm{\beta_s}^3\vee 1} \,ds.
\end{align}
Then the following assertions hold. 
\begin{enumerate}
\item[(i)]
There exists a constant $\lambda>0$, such that 
\[
\E{\exp(\lambda\, \zeta)} \ <\ \infty. 
\]
\item[(ii)] There exists a constant $C>0$, so that for all $\delta\le 1$ and $t>0$, almost surely, 
\[
\sup_{\norm{x}\leq 1} \left| D^\delta_x[0,t]-D_0[0,t]\right|\ \leq \ C\, \zeta.
\]
\end{enumerate}
\end{lemma}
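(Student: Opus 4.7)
The plan is to treat the two parts separately: part (i) by a Khasminskii-style series expansion once the mean is controlled, and part (ii) by a triangle inequality reducing the claim to two pointwise bounds on $G^*$.

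\emph{Part (i).} Set $V(y) := 1/(\|y\|^3 \vee 1)$, so that $\zeta = \int_0^\infty V(\beta_s)\,ds$ and $\phi(x) := \estart{\zeta}{x} = \int G(x,y) V(y)\,dy$. Expanding the exponential and iterating the strong Markov property on the ordered-times representation yields the classical bound
\[
\E{\exp(\lambda \zeta)} \ \le\ \sum_{n\ge 0} (\lambda \|\phi\|_\infty)^n,
\]
finite whenever $\lambda < 1/\|\phi\|_\infty$. It therefore suffices to check $\|\phi\|_\infty < \infty$. Using $G(x,y) = 1/(2\pi^2 \|x-y\|^2)$, I would split $\R^4$ into $\{|y| \le 1\}$, $\{1 < |y| \le 2\|x\|\}$ and $\{|y| > 2\|x\|\}$, and in each region use the local integrability of $|\cdot|^{-2}$ in $\R^4$ together with the decay $V(y) = |y|^{-3}$ at infinity to get a bound uniform in $x$.

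\emph{Part (ii).} The plan is to apply the triangle inequality
\[
|D_x^\delta[0,t] - D_0[0,t]| \ \le\ |D_x^\delta[0,t] - D_x[0,t]| + |D_x[0,t] - D_0[0,t]|,
\]
and bound each piece by a constant multiple of $\zeta$, uniformly in $\|x\| \le 1$, $\delta \in (0,1]$ and $t>0$. For the second piece, I would establish the pointwise estimate
\[
|G^*(x,y) - G^*(0,y)| \ \le\ \frac{C}{\|y\|^3 \vee 1} \qquad \text{for all } \|x\| \le 1,\ y\in\R^4.
\]
For $\|y\|$ large, the Taylor bound $|1/|z-x|^2 - 1/|z|^2| \le C/|z|^3$ integrated over $z \in \B(y,1)$ does the job; for $\|y\|$ bounded, the uniform bound $G^*(x,y) \le \sup_v \int_{\B(0,1)} dw/|v-w|^2 < \infty$ (again from local integrability of $|\cdot|^{-2}$) handles both $G^*(x,\cdot)$ and $G^*(0,\cdot)$ simultaneously. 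Integrating in time gives $|D_x[0,t] - D_0[0,t]| \le C\zeta$.

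For the first piece, I write
\[
D_x^\delta[0,t] - D_x[0,t] \ =\ \sum_i \int_{\tau_i^\delta \wedge t}^{\tau_{i+1}^\delta \wedge t} \bigl(G^*(x,Z_i^\delta) - G^*(x,\beta_s)\bigr)\,ds,
\]
and split the sum according to whether $\|Z_i^\delta\| > 2$ or $\|Z_i^\delta\| \le 2$. In the first regime, the gradient estimate $|\nabla_y G^*(x,y)| \le C/\|y\|^3$ (valid for $\|y\| \ge 2$ and $\|x\|\le 1$) combined with the mean-value theorem yields $|G^*(x,\beta_s)-G^*(x,Z_i^\delta)| \le C\delta/\|Z_i^\delta\|^3$; since $\|\beta_s\| \ge \|Z_i^\delta\|/2$ throughout $[\tau_i^\delta,\tau_{i+1}^\delta]$, summing is bounded by $C'\delta \int_0^\infty V(\beta_s)\,ds = C'\delta\zeta$. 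In the second regime, the uniform boundedness of $G^*$ leaves a constant times the Lebesgue time $\beta$ spends in $\B(0,3)$, which is itself $\le 27\zeta$ because $V \ge 1/27$ on $\B(0,3)$. Taking the supremum over $\|x\|\le 1$ completes the proof.

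The main obstacle is the boundedness of $\phi$ in (i): $V$ itself is not integrable on $\R^4$, so one cannot simply put $L^\infty$ on $G$ and $L^1$ on $V$; the argument must exploit the exact matching between the Green function $G \sim |y|^{-2}$ and the borderline decay $V \sim |y|^{-3}$, which is why the region decomposition described above is necessary.
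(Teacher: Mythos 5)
Your proof is correct. Part (ii) is essentially the same as the paper's: both arguments hinge on the pointwise estimate $\sup_{\|x\|\le 1}|G^*(x,y)-G^*(0,y)|\le C/(\|y\|^3\vee 1)$ (the paper's \eqref{more-7}) and the mean-value / harmonicity observation that $G^*(z)=\frac{1}{4\|z\|^2}$ for $\|z\|>1$. The only cosmetic difference is the choice of triangle split — you write $D_x^\delta-D_0 = (D_x^\delta-D_x)+(D_x-D_0)$ while the paper uses $(D_x^\delta-D_0^\delta)+(D_0^\delta-D_0)$ — but the estimates are interchangeable. (One small wrinkle: you state the gradient bound for $\|y\|\ge 2$ but apply the mean-value theorem on segments from $\beta_s$ to $Z_i^\delta$ which may dip down to norm $\|Z_i^\delta\|-\delta\ge 1$; since $\nabla G^*$ is globally bounded this only costs a constant, but a cleaner statement is $|\nabla_y G^*(x,y)|\le C/(\|y\|^3\vee 1)$ for all $y$.)

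Part (i) takes a genuinely different route. You apply the Khasminskii iteration directly to $\zeta$: the strong Markov property gives $\sup_x\estart{\zeta^n}{x}\le n!\,\|\phi\|_\infty^n$ with $\phi(x)=\estart{\zeta}{x}=\int G(x,y)V(y)\,dy$, so the whole burden is the uniform bound $\|\phi\|_\infty<\infty$, which you obtain from a careful region decomposition exploiting that $G\sim\|y\|^{-2}$ and $V\sim\|y\|^{-3}$ exactly balance (both integrals $\int_{\|y\|\le R}\|y\|^{-2}dy\sim R^2$ and $\int_{\|y\|>R}\|y\|^{-5}dy\sim R^{-1}$ converge, and the cross region contributes $O(\|x\|^{-1})$). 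The paper instead first dominates $\zeta\le\sum_{i\ge 0}2^{-3(i-1)}\ell(A_i)$ by a weighted sum of occupation times of dyadic annuli, then uses Jensen with weights $2^{-(i+1)}$ and Brownian scaling to reduce everything to the $k$-th moment of $\ell(A_0)$ and $\ell(A_1)$, to which it applies the Khasminskii-type moment bound \eqref{localtime}. Your approach is more direct and avoids Jensen/scaling, at the cost of the explicit computation of $\|\phi\|_\infty$; the paper's approach offloads the delicate matching of exponents to the scaling of Brownian motion, which makes the finiteness more transparent. Both are valid.
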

The next result gives some large deviation bounds for $D_0[0,t]$, and shows that it is concentrated. 
\begin{lemma}\label{LD.D0}
For any $\epsilon>0$,  there exists $c=c(\epsilon)>0$, such that for $t$ large enough, 
\[
\pr{|D_0[0,t] - d(t)|>\epsilon\, d(t) } \ \le\  \exp\big(-c \, (\log t)^{1/3} \big),
\]
where we recall that $d(t)=\mathbb E[D_0[0,t]]$. 
\end{lemma}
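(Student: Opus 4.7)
The plan is to establish a moment bound of the form $\E{(D_0[0,t]-d(t))^{2p}}\leq (Cp)^{3p}(\log t)^p$, which via Markov's inequality with the optimal choice $p\sim \varepsilon^{2/3}(\log t)^{1/3}$ yields the stated tail rate $\exp(-c(\log t)^{1/3})$. The natural approach is a dyadic decomposition in time, exploiting that $D_0[0,t]$ accumulates roughly a constant contribution at each of its $\sim\log t$ dyadic time scales.

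Concretely, set $K=\lfloor\log_2 t\rfloor$ and $J_k:=\int_{2^k}^{2^{k+1}\wedge t} G^*(0,\beta_u)\,du$, so that $D_0[0,t]=D_0[0,1]+\sum_{k=0}^K J_k$. By Brownian scaling and the asymptotic $G^*(0,y)=|\B(0,1)|/(2\pi^2\|y\|^2)+O(\|y\|^{-3})$, each $J_k$ is, for $k$ large, close in distribution to $\widehat J:=(|\B(0,1)|/(2\pi^2))\int_{1/2}^1\|\gamma_u\|^{-2}\,du$ for a standard 4D Brownian motion $\gamma$. A Bessel-type analysis of $\min_{u\in[1/2,1]}\|\gamma_u\|$ (using that 4D BM does not hit points) shows that $\widehat J$ has exponential tails, so $\E{J_k^p}\leq (Cp)^p$ uniformly in $k$ and $\E{J_k}\to c_0:=(\log 2)/8$, consistent with $d(t)\sim (\log t)/8$ from Lemma~\ref{lem-D0}.

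Introduce the filtration $\mathcal F_k=\sigma(\beta_u:u\leq 2^k)$ and decompose
\[
D_0[0,t]-d(t)=\sum_{k=0}^K\bigl(J_k-\econd{J_k}{\mathcal F_k}\bigr)+\sum_{k=0}^K\bigl(\econd{J_k}{\mathcal F_k}-\E{J_k}\bigr)+O(1).
\]
The first sum is a martingale, on which Burkholder-Davis-Gundy combined with the uniform moment bound yields a $2p$-th moment of order $(Cp)^p\cdot K^p\cdot (Cp)^{2p}=(Cp)^{3p}(\log t)^p$, the extra factor $(Cp)^p$ relative to a purely Gaussian bound arising from the BDG constants. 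For the predictable sum, the Markov property gives $\econd{J_k}{\mathcal F_k}=\Phi_k(\beta_{2^k})$ with $\Phi_k(y):=\int_0^{2^k}\estart{G^*(0,\beta_v)}{y}\,dv\approx c_0+c_1\log_+\!\bigl(2^{k/2}/\|y\|\bigr)$; its moments are controlled by summing over the $K$ scales and invoking the exponential integrability of $\zeta=\int_0^\infty(\|\beta_s\|^{-3}\vee 1)\,ds$ from Lemma~\ref{lem-D}(i) to bound the atypical events $\{\|\beta_{2^k}\|\ll 2^{k/2}\}$.

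The main obstacle is the non-uniformity of the conditional moments of $J_k$ across scales: on the rare events $\{\|\beta_{2^k}\|\ll 2^{k/2}\}$ (whose probability is polynomial in $\|\beta_{2^k}\|/2^{k/2}$), both $J_k$ and $\Phi_k(\beta_{2^k})$ can be logarithmically inflated, so a careful truncation on $\|\beta_{2^k}\|$ together with separate treatment of typical versus atypical scales is required to produce a uniform conditional moment bound and close the estimate. The $(Cp)^{3p}$ growth — as opposed to the sub-Gaussian $(Cp)^p$ that would hold for truly independent and bounded-moment increments — is precisely the source of the exponent $1/3$ (rather than $1/2$) in the final tail rate.
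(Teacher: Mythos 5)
Your dyadic time decomposition is genuinely different from the paper's, and the paper's route sidesteps precisely the difficulty your proposal leaves unresolved. The paper replaces the deterministic time slices $J_k$ by the increments $Y_i=\int_{\tau_i}^{\tau_{i+1}}G(\beta_s)\,ds$, where $\tau_i=\inf\{s:\|\beta_s\|>2^i\}$ is the exit time of the dyadic ball. By the strong Markov property at $\tau_i$ and the scaling relation $\lambda^2G(\lambda x)=G(x)$, the $Y_i$ are \emph{exactly} i.i.d. (Lemma \ref{lem-Y1}), so there is no predictable term at all; one then shows $D_0[0,t]\approx D_{N_t}$ where $N_t$ is a concentrated random index (via \eqref{Nt+}, \eqref{Nt-}) and concludes with a truncated Bernstein inequality, the truncation level $L\sim(\varepsilon n)^{2/3}$ being chosen against the $\mathbb E[e^{\lambda\sqrt{Y_0}}]<\infty$ tail to produce the $(\log t)^{1/3}$ exponent.

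The gap in your version is the predictable sum $\sum_{k\le K}\bigl(\mathbb E[J_k\mid\mathcal F_k]-\mathbb E[J_k]\bigr)$. Your martingale part is fine and your exponent bookkeeping for it is consistent with the target rate (set $p\sim(\epsilon^2\log t)^{1/3}$). But for the predictable part you invoke only the triangle inequality over the $K$ scales together with exponential integrability of $\zeta$; this yields at best $\|\sum_k X_k\|_{2p}\lesssim Kp$ with $X_k=\Phi_k(\beta_{2^k})-\mathbb E[J_k]$, and since $d(t)\sim K(\log 2)/8$ is of the same order as $K$, Markov's inequality then gives $\pr{|\sum_k X_k|>\epsilon d(t)}\lesssim(Cp/\epsilon)^{2p}$, which does not decay in $p$. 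To make the predictable part small one must actually use the decorrelation of the sequence $\beta_{2^k}/2^{k/2}$ across scales (e.g.\ by a further telescoping into a martingale in $j<k$ with geometrically decaying increments, or a mixing argument for the chain $U_k=\|\beta_{2^k}\|/2^{k/2}$); ``careful truncation'' on $\|\beta_{2^k}\|$ alone addresses the size of individual terms, not their joint fluctuations. This is not a cosmetic omission: it is the step the paper's exit-time decomposition is designed to avoid, and without it your bound on the centered $2p$-th moment is not established.
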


Finally the last preliminary result we should need is the following elementary fact:  
\begin{lemma}\label{lem:infint}
There exists a constant $C>0$, so that for all $k\in \N$ and $z\in \R^4$,
\[
\pr{\inf_{k\leq s\le k+1} \|\til{\beta}_s-z\|\leq 1} \ \leq \ C \, \int_{k}^{k+2}
\pr{\|\til{\beta}_u-z\|\leq 2}\, du.
\]
\end{lemma}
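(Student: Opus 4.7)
The statement is a standard ``exit-time"/occupation-measure comparison: if the Brownian motion enters $\B(z,1)$ at some point in $[k,k+1]$, then by continuity it tends to spend a unit-order amount of time nearby in $\B(z,2)$, so occupation in the bigger ball dominates first entry into the smaller one.

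Concretely, define the stopping time
\[
\tau := \inf\{s\geq k \,:\, \|\til\beta_s-z\|\leq 1\},
\]
so that the event on the left-hand side is $\{\tau\leq k+1\}$. On this event, $\til\beta_\tau$ lies on $\partial \B(z,1)$ (by path continuity) and $\tau+1\leq k+2$. Apply the strong Markov property at $\tau$: let $B_u:=\til\beta_{\tau+u}-\til\beta_\tau$ be a Brownian motion independent of $\mathcal F_\tau$. Whenever $\sup_{u\leq 1}\|B_u\|\leq 1$, the triangle inequality gives $\|\til\beta_{\tau+u}-z\|\leq \|B_u\|+\|\til\beta_\tau-z\|\leq 2$ for all $u\in[0,1]$. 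Hence there is an absolute constant
\[
c_0 := \pr{\sup_{u\leq 1}\|B_u\|\leq 1} > 0
\]
such that, almost surely on $\{\tau\leq k+1\}$,
\[
\estart{\int_\tau^{\tau+1}\1(\|\til\beta_u-z\|\leq 2)\,du\;\Big|\;\mathcal F_\tau}{}\ \geq\ c_0.
\]

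Taking expectations and using $\tau\leq k+1$, $\tau+1\leq k+2$,
\[
c_0\,\pr{\tau\leq k+1}\ \leq\ \E{\int_k^{k+2}\1(\|\til\beta_u-z\|\leq 2)\,\1(\tau\leq k+1)\,du}\ \leq\ \int_k^{k+2}\pr{\|\til\beta_u-z\|\leq 2}\,du,
\]
by Fubini. Setting $C:=1/c_0$ gives the claim.

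There is essentially no obstacle here; the only thing to verify is the uniform lower bound $c_0>0$, which is an immediate consequence of rotational invariance and the fact that $\sup_{u\leq 1}\|B_u\|$ has a continuous distribution with $\pr{\sup_{u\leq 1}\|B_u\|\leq 1}>0$ (this is standard, e.g.\ via \eqref{confinment1}, or simply because the event is non-degenerate).
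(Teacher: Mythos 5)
Your proof is correct and follows essentially the same route as the paper: stop at the first entry time of $\B(z,1)$ in $[k,k+1]$, apply the strong Markov property there, observe that with fixed positive probability $c_0=\pr{\sup_{u\le 1}\|B_u\|\le 1}$ the path stays within distance $1$ of its current position for one more time unit (hence within $\B(z,2)$), and set $C=1/c_0$. The only cosmetic difference is that you phrase the key step as a lower bound on a conditional expectation of occupation time, whereas the paper compares indicator functions pathwise before taking expectations; both yield the same estimate.
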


The proofs of these five lemmas are postponed to Sections~\ref{sec:lemmas} and~\ref{subsec:LD.D0}, and assuming them one can now finish the proof of Proposition \ref{prop-expected}.

Denote by $(\F^\beta_s)_{s\ge 0}$ and $(\F^{\til \beta}_s)_{s\ge 0}$ the natural filtrations of $\beta$ and $\til \beta$ respectively. Recall the definition \eqref{def.tau} of $\tau$, and then define the sigma-field $\mathcal G_\tau:=\F^{\til \beta}_\tau \vee (\F^\beta_s)_{s\ge 0}$.  
Next, recall the definition \eqref{def-Rdelta} of $R^\delta[0,t]$, and observe that on the event $\{\tau<\infty\}$, we have
\begin{eqnarray}\label{Rdelta.Gtau}
\estart{R^\delta[0,t]\mid \mathcal G_\tau}{0,z}\ =\ \sum_{j\ge \sigma}
\big(\tau^\delta_{j+1}\wedge t-\tau^\delta_j\wedge t\big) \, G^*(\til \beta(\tau)- Z_\sigma^\delta,Z^\delta_j-Z_\sigma^\delta),  
\end{eqnarray}
since indices $j$ smaller than $\sigma$ contribute zero in the sum by the definition of $\sigma$.

Now recall that our goal is to estimate the probability of $\tau$ being finite. We divide the proof in two parts, one 
for the lower bound and another one for the upper bound, and give two definitions of {\it good} $\sigma$ accordingly.

\underline{Proof of the lower bound.} We fix some $\epsilon>0$, and define an integer $i$ to be {\it good} if
\begin{eqnarray*}
\sup_{\norm{x}\leq 1}\,   \sum_{j\ge i}\,  
(\tau^\delta_{j+1}\wedge (\tau_i^\delta +t)-\tau^\delta_j\wedge (\tau_i^\delta +t)) G^*(x,Z^\delta_j - Z^\delta_i) 
\ \leq\ (1+\epsilon)\, d(t),
\end{eqnarray*}
and otherwise we say that $i$ is {\it bad}. Observe that the event $\{i \text{ good}\}$ is $\sigma((\beta_s-\beta(\tau^\delta_i))_{s\ge \tau_i^\delta})$-measurable, and in particular is independent of $(Z_k^\delta)_{k\le i}$. Note also that it depends 
in fact on~$t$ and $\varepsilon$, but since they are kept fixed in the rest of the proof this should not cause any confusion. 
Moreover, by the strong Markov property applied to $\tau_i^\delta$, one has (recall \eqref{Dxdelta})   
\begin{eqnarray}\label{ibad1}
\nonumber \prstart{i \text{ bad}}{0,z} & =&   \pr{\sup_{\norm{x}\leq 1}  D_x^\delta[0,t] > (1+\epsilon) \, d(t)} \\ 
&\le & C\, \exp(-c\, (\log t)^{1/3}),
\end{eqnarray}
for some positive constants $c$ and $C$, 
where the last inequality follows from Lemmas~\ref{lem-D0}, \ref{lem-D} and~\ref{LD.D0}.  
When $\tau$ is finite, then the event $\{\sigma \text{ good}\}$ can be written as 
$$\{\sigma \text{ good}\}\ =\ \bigcup_{i\ge 0} \ \{\sigma = i\} \cap \{i\text{ good}\}.$$
We also denote the complementary event as $\{\sigma \text{ bad}\}$. A subtle and difficult point here is that one cannot proceed as in \eqref{ibad1}. 
Indeed, the problem is that the events $\{\sigma=i\}$ and $\{i \text{ bad}\}$ are not independent. So the idea of Lawler, see \cite[page 101]{Law91}, in the random walk setting, 
was  to decompose the event $\{\sigma \text{ bad} \}$ into all the possible values for $\sigma$ and $\tau$ (in our case we will discretise $\tau$ and consider 
all the possible values of its integer part), and loosely replace the event $\{\sigma = i,\, \tau = k\}$ 
by the event that the two walks are at the same position 
at times $i$ and $k$ respectively.
The interest of doing so is that now the latter event is independent of the event $\{ i \text{ bad}\}$ and probabilities factorise. 
The remaining part is a double sum which is equal (in the discrete case) to the expected number of pairs of times the two walks coincide. 
In our case, Lemma \ref{lem:infint} will show that the remaining double sum can be compared with the expectation of $R^\delta[0,t]$. 
As it turns out, we will see that this argument is not too loose, since the probability 
of $i$ being bad is sufficiently small: it decays as a stretched exponential in $\log t$, as \eqref{ibad1} tells us. 
But we will come back to this in more detail a bit later, see \eqref{bad-1} and \eqref{bad-9} below.

For the moment, just observe that the event $\{\sigma\text{ good}\}$ is $\mathcal G_\tau$-measurable. 
As a consequence, one has 
\begin{eqnarray}\label{Rdelta.sigma.good}
\nonumber \mathbb E_{0,z} [R^\delta[0,t] \1(\tau<\infty,\, \sigma \text{ good})] &=& \mathbb E_{0,z} \left[\mathbb E_{0,z} [R^\delta[0,t]\mid \mathcal G_\tau] \cdot \1(\tau<\infty,\, \sigma \text{ good})\right]\\
&\le & (1+\epsilon)\, d(t) \, \mathbb P_{0,z}(\tau<\infty,\, \sigma \text{ good}),
\end{eqnarray}
where the last inequality follows from \eqref{Rdelta.Gtau}, the definition of $\sigma$ good and the easy fact that
\[
\tau_{j+1}^\delta\wedge t-\tau_j^\delta\wedge t \le
\tau_{j+1}^\delta\wedge(\tau_i^\delta+ t)-\tau_j^\delta\wedge(\tau_i^\delta +t).
\]
Therefore, we can write  
\begin{eqnarray}\label{lower-1}
\nonumber \prstart{\tau<\infty}{0,z} & \ge & 
\prstart{\tau<\infty, \sigma \text{ good}}{0,z}\\
\nonumber & = &  \frac{\estart{R^\delta[0,t] \1(\tau<\infty, \, \sigma \text{ good})}{0,z}}
{\escond{R^\delta[0,t]}{\tau<\infty, \sigma\text{ good}}{0,z}}\\
& \ge  &   \frac1 {(1+\epsilon)d(t)}\cdot \estart{R^\delta[0,t] \1(\tau<\infty,\, \sigma \text{ good})}{0,z}. 
\end{eqnarray}
The last term above is estimated through
\begin{align}\label{lower-3}
\estart{R^\delta[0,t]\1(\tau<\infty,\, \sigma \text{ good})}{0,z}=
\estart{R^\delta[0,t]}{0,z}-
\estart{R^\delta[0,t]\1(\tau<\infty, \sigma \text{ bad})}{0,z}.
\end{align}
Now the idea for estimating the expectation of $R^\delta[0,t]$ on the event $\sigma$ bad, is to use the strategy of Lawler,  described earlier. Using furthermore \eqref{Rdelta.Gtau}, and letting $y=\til \beta(\tau)- Z_\sigma^\delta$, we can write   
\begin{eqnarray}\label{bad-1}
\nonumber &&\estart{R^\delta[0,t]\1(\tau<\infty, \sigma \text{ bad })}{0,z}   \ =\  \mathbb E_{0,z}\left[\mathbb 
E_{0,z}[R^\delta[0,t]\mid \mathcal G_\tau]\, \1(\tau<\infty,\, \sigma \text{ bad}) \right] \\ 
\nonumber & =&  \mathbb E_{0,z}\left[ \left( \sum_{j\ge \sigma} (\tau_{j+1}^\delta\wedge t-\tau_j^\delta\wedge t)G^*(y,Z_j^\delta-Z_\sigma^\delta)\right)\, \1(\tau<\infty,\,  \sigma \text{ bad}) \right] \\
\nonumber &= & \sum_{k=0}^{\infty}\sum_{i=0}^{\infty}\, \mathbb E_{0,z}\left[ \left( \sum_{j\ge i}(\tau_{j+1}^\delta\wedge t-\tau_j^\delta\wedge t)G^*(y,Z_j^\delta-Z_i^\delta)\right)\, \1([\tau]=k,\, \sigma=i, \, i \text{ bad}) \right] \\
\nonumber &\le &  \sum_{k=0}^{\infty}\sum_{i=0}^{\infty}\,  \mathbb E_{0,z}\left[ \left( \sum_{j\ge i} (\tau_{j+1}^\delta\wedge t-\tau_j^\delta\wedge t)G^*(y,Z_j^\delta-Z_i^\delta) \right)\, \1\left(\inf_{k\leq s\leq k+1}\|\til{\beta}_s - Z^\delta_{i}\|
 \leq 1,\, \tau^\delta_i\le t,\, i \text{ bad}\right) \right] \\
 &\leq &    \mathbb E\left[ \left(\sup_{\|x\|\le 1} D_x^\delta[0,t]\right) \, \1(0 \text{ bad})\right]\, \sum_{k=0}^{\infty}\sum_{i=0}^{\infty}\, \mathbb P_{0,z}\left(\inf_{k\leq s\leq k+1}\|\til{\beta}_s - Z^\delta_{i}\| \leq 1,\, \tau^\delta_i\le t\right), 
\end{eqnarray}
using the strong Markov property for $\beta$ at time $\tau_i^\delta$ for the last inequality. 
Using Cauchy-Schwarz, Lemmas~\ref{lem-D0} and~\ref{lem-D} and~\eqref{ibad1} we upper bound the expectation appearing on the last line above to get
\begin{align}\label{bad-5}
 \mathbb E\left[ \left(\sup_{\|x\|\le 1} D_x^\delta[0,t]\right) \, \1(0 \text{ bad})\right]\ \le \ C(\log t)^2\, \exp(-c\, (\log t)^{1/3}),
\end{align}
for some positive constants $c$ and $C$. 
The last double sum in \eqref{bad-1} is dealt with using Lemma \ref{lem:infint}. But before we proceed with it, 
let us define 
$$R_2^\delta[0,t] \ :=\ \sum_{i\ge 0}\,  (\tau_{i+1}^\delta\wedge t-\tau_i^\delta\wedge t) \int_0^\infty \1(\|\til \beta_s-Z_i^\delta\| \le 2)\, ds.
$$
Note that $\mathbb E[\tau_{i+1}^\delta\wedge t - \tau_i^\delta\wedge t] \leq  \delta^2/4$, for all $i\ge 0$. Thus 
$$\mathbb E_{0,z}[R_2^\delta[0,t]]\ \leq \ \frac{\delta^2}{4}\, \sum_{i\ge 0} \int_0^\infty \mathbb P(\|\til \beta_s-Z_i^\delta\| \le 2,\, \tau_i^\delta\le t)\, ds.$$
Together with Lemma  \ref{lem:infint}, we obtain   
\begin{eqnarray*}
\sum_{k=0}^{\infty}\sum_{i=0}^{\infty}\, \mathbb P_{0,z}\left(\inf_{k\leq s\leq k+1}\|\til{\beta}_s - Z^\delta_{i}\| \leq 1,\, \tau^\delta_i\le t\right) \ \le\  \frac{C}{\delta^2}\ \mathbb E_{0,z}[ R^\delta_2[0,t]]. 
\end{eqnarray*}
Moreover, by Brownian scaling, one can see that $R^\delta_2[0,t]$ is equal in law to  
$16R^{\delta/2}[0,t/4]$. Therefore, it follows from Lemma~\ref{lem-ER} 
that there exists a constant $C>0$, such that for all $t\ge 1$ and $\delta \le 1$,  
$$\limsup_{\|z\|\to \infty} \frac{\mathbb E_{0,z}[ R^\delta_2[0,t]]}{G(0,z)} \ \le\  C\, t.$$

Combining this with Lemma \ref{lem-ER}, \eqref{lower-1}, \eqref{lower-3}, \eqref{bad-1} and \eqref{bad-5} gives \begin{eqnarray*}
\liminf_{\|z\|\to \infty}\  \frac{\mathbb P_{0,z}(\tau<\infty)}{G(0,z)} \ \ge \ \frac {\pi^2}{2(1+\epsilon)}\cdot \frac{t}{d(t)}\cdot \left(1-\frac{C}{\delta^2}\, \exp(-c\, (\log t)^{1/3})\right),
\end{eqnarray*}
for all $t$ large enough. 
Using in addition Lemma \ref{lem-D0}, and since the above estimate holds for all $\epsilon>0$, we get (recall \eqref{cap.inclusion} and \eqref{def-exp})   
\begin{eqnarray*}\label{lowerbound}
\liminf_{t\to \infty} \ \frac{\log t}{t} \cdot \mathbb E[ \cp(W_1[0,t])]\   \ge \ 4\pi^2. 
\end{eqnarray*}

\vspace{0.2cm}
\underline{Proof of the upper bound.}

We again fix $\epsilon\in (0,1)$ and we 
define an integer  $i$ to be {\it good} if
\begin{align*}
\inf_{\norm{x}\leq 1} \  \sum_{j\ge i}\, (\tau^\delta_{j+1}\wedge (\tau^\delta_i+\epsilon t)-\tau^\delta_j\wedge (\tau^\delta_i+\epsilon t)) G^*(x,Z^\delta_j - Z^\delta_i) \ 
\geq \ (1-\epsilon)\, d(t),
\end{align*}
and otherwise we say that $i$ is {\it bad}. The probability the latter happens satisfies
\begin{align}\label{bad-4}
\prstart{i \text{ bad}}{0,z}=\pr{\inf_{\norm{x}\leq 1} 
 D_x^\delta[0,\epsilon t] < (1-\epsilon) \, d(t) } \ \le\ C\, \exp(-c\, (\log t)^{1/3}),
\end{align}
where again the last inequality follows from Lemmas~\ref{lem-D0}, \ref{lem-D} and~\ref{LD.D0}.

We write next 
\[
\prstart{\tau<\infty}{0,z} = \prstart{\tau<\infty,\, \sigma \text{ good}}{0,z} + \prstart{\tau<\infty, \, \sigma \text{ bad}}{0,z}.
\]
Let us treat first the probability with the event $\sigma$ good. We have 
\begin{align*}
\prstart{\tau<\infty, \,\sigma \text{ good}}{0,z} \leq
\frac{\estart{R^\delta[0,t(1+\epsilon)]}{0,z}}{\escond{R^\delta[0,t(1+\epsilon)}{\tau<\infty, \sigma \text{ good}}{0,z}}.
\end{align*}
Using the same argument as for the lower bound, the new definition of good $\sigma$ and the fact that on the event $\{\tau_i^\delta\leq t\}$ we have 
\[
\tau_{j+1}^\delta \wedge (t+\epsilon t) - \tau_{j}^\delta\wedge (t+\epsilon t) \geq \tau_{j+1}^\delta \wedge (\tau_i^\delta+\epsilon t) - \tau_{j}^\delta\wedge (\tau_i^\delta+\epsilon t),
\]
we see that 
\begin{align*}
\escond{R^\delta[0,t(1+\epsilon)]}{\tau<\infty, \sigma \text{ good}}{0,z} 
\geq (1-\epsilon) \, d(t).
\end{align*}
Together with Lemma~\ref{lem-ER} this provides the upper bound for the term 
\[
\limsup_{\norm{z}\to \infty} \frac{\prstart{\tau<\infty, \sigma \text{ good}}{0,z}}{G(0,z)}.
\]

Let us treat now the event $\sigma$  bad. Using the same argument as for the lower bound and in particular \eqref{bad-4}, we obtain 
\begin{align}\label{bad-9}
\begin{split}
\prstart{\tau<\infty, \sigma \text{ bad}}{0,z}  \  & 
=\ \sum_{k=0}^{\infty}\sum_{i=0}^{\infty}\,  \prstart{\sigma=i, \, [\tau]=k,\,  i \text{ bad}}{0,z}   \\
& \leq \ \sum_{k=0}^{\infty}\sum_{i=0}^{\infty}\,  
\prstart{\inf_{k\leq s\leq k+1}\|\til{\beta}_s - Z^\delta_i\|  \leq 1,\, \tau^\delta_i\le t,\, i \text{ bad}}{0,z}\\
& \leq\  \sum_{k=0}^{\infty}\sum_{i=0}^{\infty}\, 
\prstart{\inf_{k\leq s\leq k+1}\norm{\til{\beta}_s - Z^\delta_{i}}  \leq 1, \tau^\delta_i\le t}{0,z}\pr{ i \text{ bad}}\\
& \le\ \frac{C}{\delta^2}\,  \estart{R^\delta_2[0,t]}{0,z} \, \exp(-c\, (\log t)^{1/3}),
\end{split}
\end{align}
for some positive constants $c$ and $C$. We conclude similarly as for the lower bound that 
\begin{eqnarray*}
	\limsup_{t\to\infty} \ \frac{\log t}{t}\cdot \E{\cc{W_1[0,t]}} \, \leq\,   4\pi^2,
\end{eqnarray*}
and this completes the proof of Proposition \ref{prop-expected}.
\hfill $\square$

%%%%%%%%%%%%%%%%%%%%%%%%%%%%%%%%%%%%%%%%%%%%%%%%%%%%%%%%%%%%%%

\subsection{Proofs of Lemma~\ref{lem-ER}, \ref{lem-D0}, \ref{lem-D} and~\ref{lem:infint}}\label{sec:lemmas}
Before we start with the proofs, it will be convenient to introduce some new notation. 
For $A\subset \R^4$ measurable, we denote by $\ell(A)$ the total time spent in the set $A$ by the Brownian motion $\beta$: 
$$\ell(A):=\int_0^\infty \1(\beta_s\in A)\, ds.$$
We also define the sets $A_0=\B(0,1)$, and 
$$A_i\ :=\ \B(0,2^i)\setminus\B(0,2^{i-1}),\quad \text{for }i\ge 1.$$
Note that for any $A$ and $k\ge 1$, one has using the Markov property  
\begin{eqnarray}
\label{localtime} 
\mathbb E[\ell(A)^k]  =   k!\, \mathbb E\left[ \int_{s_1\le \dots\le s_k } \1(\beta_{s_1}\in A,\dots,\beta_{s_k} \in A)\, ds_1\dots ds_k\right] \le k! \, \left(\sup_{x\in A\cup \{0\}}\, \mathbb E_x [\ell (A)]\right)^k.
\end{eqnarray}

\begin{proof}[\bf Proof of Lemma~\ref{lem-D}] 
Let us start with part (i). 
Observe first that 
\begin{align*}
\zeta\ \le\  \sum_{i=0}^\infty\,  \frac{\ell(A_i)}{2^{3(i-1)}}.
\end{align*}
Using Jensen's inequality and that $\ell(A_i)$ has the same distribution as  $2^{2(i-1)} \ell(A_1)$ for all $i\geq 1$, we obtain
\begin{align*}
\begin{split}
\mathbb E[\zeta^k]\ \le &\ 4^k\,  \E{\left( \sum_{i=0}^\infty \frac{1}{2^{i+1}} \frac{\ell(A_i)}{2^{2(i-1)}}\right)^k}\\
\le & \ 4^k \, \sum_{i=0}^\infty \frac{1}{2^{i+1}} \, \E{\left( \frac{\ell(A_i)}{2^{2(i-1)}}\right)^k}\\
\le  & \ 16^k\,  \E{\ell(A_0)^k} + 4^k\, \E{\ell(A_1)^k}\\
\le & \ C^k\,  k!,
\end{split}
\end{align*}
for some constant $C>0$, where we used \eqref{localtime} at the last line. The first part of the lemma follows.

Now we prove part (ii). 
	To simplify notation, write $G^*(z) = G^*(0,z)$, and recall that by definition 
\[
G^*(z) =\int_{\B(z,1)} G(w)\, dw.  
\]	
Then recall that the Green's function $G$ is harmonic on $\R^4\smallsetminus \{0\}$, so it satisfies the mean-value property on this domain. This implies that if $\|z\|>1$, then $G^*(z)=|\B(0,1)|\cdot G(0,z)$. Recall furthermore that 
$|\B(0,1)|=\pi^2/2$, so that $G^*(z) = (\pi^2/2) \cdot G(z)=1/(4\|z\|^2)$, when $\|z\|>1$.  	
Now suppose that $\norm{u}> 2$ and $\|x\|\le 1$. Then 
\[
|G^*(u+x) - G^*(u)| =\frac 14\,  \left|\frac{1}{\norm{u+x}^2} - \frac{1}{\norm{u}^2}\right| \leq
 \frac{1+2\norm{u}}{\norm{u+x}^2 \norm{u}^2} \le \frac{C}{\norm{u}^3}.
\]
Since in addition $G^*$ is bounded on $\B(0,3)$, we deduce that there exists  $C>0$, so that for all $u\in \R^4$,
\begin{align}\label{more-7}
\sup_{\norm{x}\leq 1} \left| G^*(u+x) - G^*(u) \right|\leq  \frac{C}{\norm{u}^3 \vee 1}.
\end{align}
Then it follows from the formulas \eqref{Dx} and \eqref{Dxdelta} for $D_x[0,t]$ and $D_x^\delta[0,t]$ respectively, that 
\begin{align*}
\sup_{\norm{x}\leq 1}\, |D_x[0,t] - D_0[0,t]| \ \le\ C \, \int_0^t \frac{1}{\norm{\beta_u}^3\vee 1}\, du, 
\end{align*}
and 
\begin{align*}
	\sup_{\|x\|\le 1} \, | D_x^\delta[0,t]-D^\delta_0[0,t] | \ \leq\ C\,  \sum_{i\ge 0} \,  
	\int_{\tau_i^\delta}^{\tau_{i+1}^\delta}\frac{1}{\|\beta(\tau_i^\delta)\|^3\vee 1} \, ds.
\end{align*}
Moreover, by definition of the times $\tau_i^\delta$ one has $\|\beta_s-\beta(\tau_i^\delta)\| \leq \delta$, for all $s\in [\tau_i^\delta, \tau_{i+1}^\delta]$. 
Therefore for all such $s$, by the triangle inequality 
$\|\beta(\tau_i^\delta)\| \geq  \|\beta_s\|-\delta\ge \|\beta_s\|/2$, as long as $\|\beta_s\|\ge 2\delta$. 
Since $\delta\le 1$, it follows that 
\begin{align*}
	\sup_{\|x\|\le 1}\, | D_x^\delta[0,t]-D_0^\delta[0,t] |  \ \le\  C\, \left(\int_0^\infty \1(\|\beta_s\|\le 2\delta)\, ds +\int_0^\infty \frac{1}{\|\beta_s\|^3\vee 1}\, ds\right)\ \le\ 9 C \zeta.
\end{align*}
It remains to compare $D_0^\delta[0,t]$ with $D_0[0,t]$. But since it makes no difference in the proof, and since in addition we will need it in the proof of Lemma \ref{lem-ER}, we compare in fact $D_z^\delta[0,t]$ with $D_z[0,t]$, for general $z\in \R^4$. We now have
\begin{align*}
| D_z^\delta[0,t] - D_z[0,t]|\  \leq\  \sum_{i\ge 0}\, \,  \int_{\tau^\delta_i\wedge t}^{\tau^\delta_{i+1}\wedge t} |G^*(z+\beta_s) - G^*(z+\beta(\tau^\delta_i))| \,ds.
\end{align*}
Then using again that $\|\beta_s - \beta(\tau^\delta_i)\|\leq \delta$, for all~$s \in [\tau^\delta_i,\tau^\delta_{i+1}]$,  
and \eqref{more-7} we get for all $\delta \le 1$, 
\begin{eqnarray}
\label{DdeltaD}
|D^\delta_z[0,t]-D_z[0,t]| & \le& C\, \sum_{i\ge 0}\,\,  \int_{\tau^\delta_i\wedge t}^{\tau^\delta_{i+1}\wedge t} 
\frac{1}{\|z+\beta_s\|^3\vee 1} \, ds  \le   C \, \int_0^{t}  \frac{1}{\|z+\beta_s\|^3\vee 1} \, ds.
\end{eqnarray}
Taking $z=0$, and combining this with the previous estimates proves part (ii) of the lemma.
\end{proof}

\begin{proof}[\bf Proof of Lemma~\ref{lem-ER}]
We start with the first statement of the lemma. 
Using \eqref{DdeltaD} we obtain 
\begin{align}\label{RdeltaR}
\left|\frac{\mathbb E_{0,z}[R^\delta[0,t]] - \mathbb E_{0,z}[R[0,t]]}{G(0,z)}\right| \   \le \ \estart{\left|\frac{D_z^\delta[0,t] - D_z[0,t]}{G(0,z)}\right|}{} \le\  C \, \E{\int_0^{t} \frac{\norm{z}^2}{\norm{z+\beta_s}^3\vee 1} \,ds}.  
\end{align}
By direct calculations we now get
\begin{align*}
 \E{\int_0^t \frac{\norm{z}^2}{\norm{z+\beta_s}^3\vee 1} \,ds}  \ & = \  \int_{\R^4}\int_0^t \frac{1}{(2\pi s)^2}\frac{\norm{z}^2}{\norm{z+x}^3\vee 1} e^{-\frac{\norm{x}^2}{2s}}\,ds\,dx\\
&=\ \int_{\R^4} \frac{1}{2\pi^2} \frac{\norm{z}^2}{\norm{z+x}^3\vee 1} \frac{1}{\norm{x}^2} e^{-\frac{\norm{x}^2}{2t}}\,dx \\
& \le \  C\, \left(1+ \frac{t}{\norm{z}} + \norm{z} e^{-\frac{\norm{z}^2}{8t}}\right) 
\end{align*} 
and this now completes the proof of the first part of the lemma.

Let us now prove the second part. Note first that 
\begin{align}\label{heur-3}
\estart{R[0,t]}{0,z}=\int_{\R^4} \, G^*(0,z-x) G_t(x)\, dx,
\end{align}
with $G_t(x):=\int_0^t p_s(0,x)\, ds$. Moreover, as we saw in the proof of the previous lemma, 
$G^*(0,z)=(\pi^2/2)\cdot G(0,z)$, when  $\|z\|>1$. 
Therefore for any fixed $x$, $G^*(z-x)/G(z)$ converges to $\pi^2/2$, 
as $\|z\|\to\infty$. Furthermore, using Fubini we can see that $\int G_t(0,x)\, dx=t$. 
We now explain why we can interchange the limit as $z$ goes to infinity and the integral in \eqref{heur-3}.

Indeed, for any $z$ satisfying $\|z\|\ge 1$, let
$F_z=\{x:\ \|z-x\|\le \|z\|/2\}$.
Using standard properties of the Brownian motion, we obtain
for positive constants $C$ and $C'$ independent of $z$, 
\begin{align*}
\begin{split}
\int_{F_z} \frac{G^*(z-x)}{G(z)} G_t(x)\, dx & \le  \ C\,  \|z\|^2\,  \int_{\|x\|\ge
\|z\|/2}\! \, G_t(x)\, dx \, 
= C\,   \|z\|^2 \int_0^t \mathbb P(\|\beta_s\|\ge \|z\|/2 )\, ds \\
& \le \ C2^4 \,  \|z\|^2 \int_0^t \frac{\mathbb E[\|\beta_s\|^4]}{\|z\|^4}\, ds 
\ \le \ C'\,   \frac{t^3}{\|z\|^2}.
\end{split}
\end{align*}
On the other hand on $\R^4\backslash F_z$, the ratio $G^*(z-x)/G(z)$
is upper bounded by a constant, and hence one can apply the dominated convergence theorem.
We conclude that, for any $t>0$, 
$$
\lim_{\|z\|\to \infty} \ \frac{\estart{R[0,t]}{0,z}}{G(0,z)} \ =
\ \frac{\pi^2}{2} \, t.
$$
\end{proof}

\begin{proof}[\bf Proof of Lemma~\ref{lem-D0}]
One has by integrating first with respect to $\til \beta$, and using that $G$ is harmonic on $\R^4\smallsetminus \{0\}$, 
\begin{align*}
\begin{split}
 \E{D_0[0,t]} &= \ \E{\int_0^\infty du\int_0^t ds \,\1
(\|\beta_s-\til{\beta}_u\|\leq 1) }\\
&=\ \int_0^t \E{\int_{\B(0,1)} G(\beta_s-z)\, dz}\, ds \\
&=\ \frac{\pi^2}{2} \, \int_0^t \E{G(\beta_s)\1(\|\beta_s\|>1)} \, ds \, +\,  \mathcal O\left( \int_0^t \mathbb P(\|\beta_s\|\le 1)\, ds\right) \\
&=\ \frac{\pi^2}{2} \, \int_0^t \int_{\|x\|>1} \frac{G(x)}{2\pi^2 s^2} e^{-\frac{\|x\|^2}{2s}} \, dx \, ds \,  + \, \mathcal O(1)\\
&= \ \frac{1}{8 \pi^2} \, \int_{\|x\|>1} \frac{1}{\|x\|^4} e^{-\frac{\|x\|^2}{2t}} \, dx  \, +\, \mathcal O(1), 
\end{split}
\end{align*}
applying Fubini at the last line.  Using now a change of variable 
the last integral is equal to 
\sout{we get }
\begin{eqnarray*}
 \frac{1}{8 \pi^2} \, \int_{\|x\|>1} \frac{1}{\|x\|^4} e^{-\frac{\|x\|^2}{2t}} \, dx& =&  \frac{1}{8\pi^2} \int_1^\infty \frac{2\pi^2\rho^3}{\rho^4}e^{-\frac{\rho^2}{2t}}\, d\rho\ =\  \frac 14 \, \int_{\frac{1}{\sqrt{t}}}^{\infty} \frac{1}{r}e^{-\frac{r^2}{2}}\, dr\\
 &=& \frac 14 \, \int_{\frac{1}{\sqrt{t}}}^{1}\frac{1}{r}\,dr + \mathcal O(1) = \frac{\log t}{8} + \mathcal O(1).
\end{eqnarray*} 
It remains to upper bound the second moment of $D_0[0,t]$. 
Recalling \eqref{Dx}, and by using the Markov property, we get
\begin{eqnarray}\label{D02}
\nonumber \mathbb E[D_0[0,t]^2] &=& \mathbb E\left[\int_0^t \int_0^tG^*(\beta_s)G^*(\beta_{s'})\, ds\, ds'\right] \\
\nonumber &=& 2\int_{0\le s\le s'\le t} \E{G^*(\beta_s)G^*(\beta_{s'})} \, ds\,ds'\\
\nonumber &\le & 2 \int_0^t \, ds\, \mathbb E\left[G^*(\beta_s) \, \mathbb E\left[\int_0^t G^*(\beta_s,\beta_{s'})\, ds' \mid \beta_s\right]\right]\\
&\le & 2 \E{D_0[0,t]} \cdot \left(\sup_{z\in \R^4} \E{D_z[0,t]}\right). 
\end{eqnarray}
Now a simple computation shows that for any $z\in \R^4$ and $t>0$,
$$\mathbb P(\|\beta_t-z\|\le 1) \le \mathbb P(\|\beta_t\|\le 1).$$
Using next that if $\beta$ and $\til \beta$ are two independent standard Brownian motions, then $\beta_u-\til \beta_s$ equals in law $\beta_{u+s}$, for any fixed positive $u$ and $s$, we deduce that also for any $z\in \Z^4$, 
$$\mathbb P_{0,z}(\|\beta_u-\til \beta_s\|\le 1) \ =\ \mathbb P_{0,0}(\|\beta_u-\til \beta_s-z\|\le 1) \ \le\ \mathbb P_{0,0}(\|\beta_u-\til \beta_s\|\le 1),$$
where $\bP_{0,z}$ denotes the law of two independent Brownian motions $\beta$ and $\til \beta$ starting respectively from 
$0$ and $z$. In other terms, one has
$$\E{D_z[0,t]}\ \le \ \E{D_0[0,t]},$$
for all $z\in \R^4$. Together with~\eqref{D02}, this shows that 
$$\mathbb E[D_0[0,t]^2] \ \le \ 2\mathbb E[D_0[0,t]]^2,$$
which concludes the proof, using also the first part of the lemma. 
\end{proof}

\begin{proof}[\bf Proof of Lemma \ref{lem:infint}]
Let 
$$\tau_{k,z}:=\inf\{ s\in [k,k+1]\ :\ \|\til \beta_s -z\| \le 1\}.$$
Note that almost surely, 
\begin{align}\label{mimic-3}
\1\left(\tau_{k,z}\le k+1,\, \sup_{0\le u\le 1}\| \til \beta(\tau_{k,z}+u)-\til \beta(\tau_{k,z})\|\le 1\right)\, \le\,  \int_k^{k+2} \1(\|\til \beta_s-z\| \le 2)\, ds,
\end{align}
just because when the indicator function on the left-hand side equals $1$, we know that $\til \beta$ remains within 
distance at most $2$ from $z$ during a time period of length at least 1. Now, we can use the strong Markov property at time $\tau_{k,z}$ to obtain
\[
\pr{\tau_{k,z}\le k+1,\, \sup_{0\le u\le 1}\| \til \beta(\tau_{k,z}+u)-\til \beta(\tau_{k,z})\|\le 1}\ =\  \pr{\tau_{k,z}\le k+1} \cdot \pr{\sup_{0\le u\le 1}\| \til \beta_u\| \le 1}.
\]
Thus, the lemma follows after taking expectation in \reff{mimic-3}, with $C= 1/\pr{\sup_{0\le u\le 1}\| \til \beta_u\| \le 1}$, which is a positive and finite constant.  
\end{proof}

%%%%%%%%%%%%%%%%%%%%%%%%%%%%%%%%%%%%%%%%%%%%%%%%%%%%%%%%%%%%%%

\subsection{Proof of Lemma~\ref{LD.D0}}\label{subsec:LD.D0}
The idea of the proof is to show that $D_0[0,t]$ is close to a sum of order $\log t$ i.i.d.\ terms with enough finite moments  
(to be more precise we will see that the square root of each of these terms has some finite exponential moment), 
and then apply standard concentration results. This idea was also guiding Lawler's intuition in the discrete setting, as he explains 
in his book \cite{Law91} p.98, in order to understand why $D_0[0,t]$ should be concentrated. 
Since he was not looking for sharper estimates, 
he just showed that the variance of (the analogue in the discrete setting of) $D_0[0,t]$ was of order $\log t$, as its mean. 
But he did it by direct computations, without pushing further this idea of viewing $D_0[0,t]$ as a sum of i.i.d.\ terms. 
Here we will make it more precise (taking advantage of the continuous setting and of the scaling property of the Brownian motion)  
and deduce some better bounds. 
In fact we do not really need the full strength of Lemma~\ref{LD.D0}. However, having just a control of the variance would not be sufficient for the proof; we need at least a good control of the eighth centered moment. Since this is not much more difficult or longer to obtain, 
we prove the stronger result stated in the lemma instead. 
 
First, let us define
the sequence of stopping times $(\tau_i)_{i\ge 0}$ by  
\begin{eqnarray*}
\tau_i : = \inf\{s\ge 0 \ :\ \|\beta_s\|>2^i\},
\end{eqnarray*} 
for all $i\ge 0$. 
Then set for $i\ge 0$, 
\begin{eqnarray*}
Y_i := \int_{\tau_i}^{\tau_{i+1}} G(\beta_s) \, ds, 
\end{eqnarray*}
and for $n\ge 0$, 
$$D_n\ :=\ \sum_{i=0}^n\,  Y_i.$$
Note that in dimension four, for any positive real $\lambda$ and $x\in \R^4$, one has 
$ \lambda ^2 G(\lambda x)= G(x)$. 
Therefore using the scaling property of the Brownian motion, we see that 
the $Y_i$'s are independent and identically distributed. 
The following lemma shows that $Y_0$ has sufficiently small moments, and as a consequence that $D_n$ is concentrated. We postpone its proof. 

\begin{lemma}\label{lem-Y1} There exists a positive constant $\lambda$, such that 
\begin{align*}
\mathbb E[e^{\lambda \sqrt{Y_0}}] \ < \ \infty. 
\end{align*}
As a consequence there exist positive constants $c$ and $C$, such that for all $\epsilon>0$ and $n\ge 1$,  
	\begin{align*}
\pr{|D_n-\mathbb E[D_n]| > \epsilon \, \mathbb E[D_n]}\ \le\ C\,  \exp (  - c \, (\epsilon n)^{1/3} ).
\end{align*}
\end{lemma}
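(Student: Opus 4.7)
The cleanest route is via the Lamperti (skew-product) representation of Brownian motion in dimension four. Since $G(x) = (2\pi^2\|x\|^2)^{-1}$, we may write $2\pi^2 Y_0 = \int_{\tau_0}^{\tau_1} \|\beta_s\|^{-2}\, ds$. On $(0, \infty)$ the path $\beta$ avoids $0$ almost surely, so the time change $\sigma(t) := \int_{\tau_0}^{t} \|\beta_s\|^{-2}\, ds$ is a.s.\ continuous and strictly increasing from $[\tau_0, \tau_1]$ onto $[0, \sigma(\tau_1)]$. Lamperti's theorem then asserts that $X_u := \log \|\beta_{\sigma^{-1}(u)}\|$ is a one-dimensional Brownian motion with drift $(d-2)/2 = 1$ started at $X_0 = \log 1 = 0$. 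Because $\tau_1$ is the first hitting time of $\partial \B(0, 2)$,
\[
2\pi^2 Y_0 \;=\; \sigma(\tau_1) \;=\; \inf\{u \ge 0 : X_u = \log 2\},
\]
which is the first passage time of $W_u + u$ to the level $\log 2$ and therefore has an inverse Gaussian distribution whose density decays exponentially at infinity. In particular $\E{e^{\lambda Y_0}} < \infty$ for some $\lambda > 0$, and a fortiori $\E{e^{\lambda \sqrt{Y_0}}} < \infty$ for every $\lambda > 0$, which is the first claim.

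For the concentration bound, the previous paragraph produces an i.i.d.\ sequence $(Y_i)_{i \ge 0}$ of sub-exponential random variables, and $\E{D_n} = (n+1)\E{Y_0}$ grows linearly in $n$. A standard Bernstein inequality for i.i.d.\ sums of sub-exponential variables then yields, for $\epsilon \in (0, 1)$,
\[
\pr{|D_n - \E{D_n}| > \epsilon\, \E{D_n}}\ \le\ 2\exp(-c\,\epsilon^2 n),
\]
which is strictly stronger than the claimed bound $C\exp(-c(\epsilon n)^{1/3})$.

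\textbf{Main obstacle.} The only subtle point is justifying the Lamperti time change itself: one must verify that $\sigma(\tau_1) < \infty$ almost surely, even though $\|\beta_s\|^{-2}$ is unbounded on $[\tau_0, \tau_1]$ (as $\|\beta\|$ may come arbitrarily close to $0$). This is exactly the content of Lamperti's theorem for the Bessel process of dimension $d \ge 3$, where the radial part does not accumulate time near the origin. A more elementary alternative, closer in spirit to this paper's systematic use of scaling, would decompose $Y_0$ along the dyadic shells $A_k = \B(0, 2^{-k}) \setminus \B(0, 2^{-k-1})$ for $k \ge 0$ (together with the outer contribution $\int_{\tau_0}^{\tau_1} G(\beta_s) \1(\|\beta_s\| \ge 1)\, ds \le (\tau_1-\tau_0)/(2\pi^2)$) and combine (i) optional stopping applied to the harmonic function $\|x\|^{-2}$, which dominates the number of visits of $\beta$ to $A_k$ during $[\tau_0, \tau_1]$ by a geometric random variable of parameter $O(4^{-k})$, with (ii) the Brownian scaling estimate that each such visit contributes time of order $16^{-k}$ to the occupation measure of $A_k$, so that the contribution $4^k L_k$ of scale $k$ to $Y_0$ is tight uniformly in $k$; a union bound over $k$ then yields the stretched-exponential tail on $Y_0$.
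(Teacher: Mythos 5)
Your proof is correct, and it takes a genuinely different route from the paper's for the first part. You identify $2\pi^2 Y_0 = \int_{\tau_0}^{\tau_1}\|\beta_s\|^{-2}\,ds$ as the occupation-time clock in Lamperti's skew-product representation of the four-dimensional Bessel process, so that $\sigma(\tau_1)$ is exactly the first passage time of a drift-one Brownian motion to $\log 2$, hence inverse Gaussian with a purely exponential right tail. This cleanly gives $\E{e^{\lambda Y_0}}<\infty$ (not just $\E{e^{\lambda\sqrt{Y_0}}}<\infty$), and then a standard Bernstein inequality for i.i.d.\ sub-exponential summands gives the stronger bound $\exp(-c\min(\epsilon^2,\epsilon)n)$, which dominates $\exp(-c(\epsilon n)^{1/3})$ for fixed $\epsilon$ as $n\to\infty$ (and in particular in the regime where Lemma~\ref{LD.D0} is applied). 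The paper instead bounds $Y_0$ by $C\bigl(\sum_{i\ge 1}\1(\tau_{-i+1}<\tau_1)\,4^i\ell(A_{-i})+\tau_1\bigr)$, using H\"older, the strong Markov property, Brownian scaling of the shell occupation times, and the hitting estimate $\pr{\tau_{-i+1}<\tau_1}\le 4^{-i}$ to obtain $\E{Y_0^k}\le C^k(k!)^2$ and hence the weaker $\E{e^{\lambda\sqrt{Y_0}}}<\infty$; for the concentration step it truncates at $L=(\epsilon n)^{2/3}$ and applies Bernstein to the truncated variables, yielding the stretched-exponential rate. Your alternative ``elementary'' sketch via dyadic shells is essentially this argument. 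The trade-off: the Lamperti route is shorter and sharper, but relies on a ready-made structural fact about Bessel processes; the paper's dyadic-shell computation is longer but fully self-contained and reuses the same $\ell(A_i)$ and $\tau_i$ machinery that appears repeatedly elsewhere in Section~3 (e.g.\ Lemmas~\ref{lem-D} and~\ref{lem:YNt}), which presumably explains the authors' choice. One small remark: your ``main obstacle,'' the a.s.\ finiteness of $\sigma(\tau_1)$, is immediate from transience in dimension four ($\beta_s\ne 0$ for all $s>0$ a.s., so $\|\beta\|^{-2}$ is bounded on the compact interval $[\tau_0,\tau_1]$) and does not require invoking Lamperti's theorem.
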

Now we will see that $D_0[0,t]$ is close to $D_{N_t}$, where $N_t$ is defined for all $t>0$, by 
\begin{align*}
N_t=\sup\{i\, :\,  \tau_i\le t\}. 
\end{align*}
Indeed, recall that 
\begin{align*}
D_0[0,t] = \int_0^t G^*(\beta_s)\, ds,
\end{align*}
and that $G^*(z) = G(z)$, whenever $\|z\|>1$. 
Therefore 
\begin{align}\label{sumiid}
D_0[0,t]  \ =\ D_{N_t}  - Z_1(t) - Z_2(t) + Z_3(t), 
\end{align}
with 
$$Z_1(t)=\int_{\tau_0\wedge t}^t \1(\|\beta_s\|\le 1) G(\beta_s)\, ds,\ 
Z_2(t)= \int_t^{\tau_{N_t+1}} G(\beta_s)\, ds, \text{ and } Z_3(t) = \int_0^t \1(\|\beta_s\|\le 1) G^*(\beta_s)\, ds .$$
Since, $G^*$ is bounded on $\B(0,1)$, we see that $Z_3(t) \le Z_3(\infty)\le C\, \ell(A_0)$, for some constant $C>0$, with the notation introduced at the beginning of Section \ref{sec:lemmas}. 
Moreover, by definition $Z_2(t) \le Y_{N_t}$. These bounds  
together with \eqref{localtime} and the next lemma show that $Z_1(t)$, $Z_2(t)$ and $Z_3(t)$ are negligible in \eqref{sumiid}.  

\begin{lemma}\label{lem:YNt}
There exists $\lambda>0$, such that 
$$ \mathbb E[e^{\lambda \sqrt{Z_1(\infty)}}] \ <\ +\infty,$$
and for any $\varepsilon>0$, there exist $c>0$ and $C>0$, such that  
$$\mathbb P(Y_{N_t}\, \ge \, \epsilon \log t)\ \le \ C\exp(-c \, \sqrt{\log t} ).$$ 
Moreover, $\mathbb E[Y_{N_t}] = o(\log t)$. 
\end{lemma}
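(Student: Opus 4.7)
The plan is to handle the three assertions in turn, relying throughout on the strong Markov property at the hitting times $\tau_k$ together with Brownian scaling.

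For $\mathbb E[e^{\lambda\sqrt{Z_1(\infty)}}]<\infty$, I would decompose $Z_1(\infty)$ over outer excursions of $\beta$ between $\partial\B(0,1)$ and $\partial\B(0,2)$. Set $\rho_0=\tau_0$, $\sigma_k=\inf\{s>\rho_k:\|\beta_s\|=2\}$, $\rho_{k+1}=\inf\{s>\sigma_k:\|\beta_s\|=1\}$, and $K=\sup\{k:\rho_k<\infty\}$. Since $\|\beta_s\|>1$ throughout each interval $[\sigma_k,\rho_{k+1}]$, one gets $Z_1(\infty)=\sum_{k=0}^{K}X_k$ with $X_k=\int_{\rho_k}^{\sigma_k}\mathbf 1(\|\beta_s\|\le1)G(\beta_s)\,ds$. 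Rotational invariance combined with strong Markov makes the hitting distributions at $\partial\B(0,1)$ and $\partial\B(0,2)$ uniform at each step, forcing the $X_k$'s to be i.i.d.; moreover, by \eqref{hit.ball} the return probability from $\partial\B(0,2)$ to $\B(0,1)$ equals $1/4$, so $K$ is geometric of parameter $3/4$ and independent of the $X_k$'s. Iterated Green's function estimates based on the recursion $u_k(x)=\int_{\B(0,1)}G(x,y)G(y)u_{k-1}(y)\,dy$ with $u_0\equiv 1$—using that the singularity of $G^2$ at the origin is tamed by $\int_0^{1/2}r(\log(1/r))^{k-1}dr\sim (k-1)!/2^k$—then yield by induction the bound $u_k(e_1)\le C^k$, hence $\mathbb E[X_1^k]\le k!\,C^k$, so $X_1$ has a finite exponential moment. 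Applying $\sqrt{\sum X_k}\le\sum\sqrt{X_k}$ together with independence gives
\[
\mathbb E\bigl[e^{\lambda\sqrt{Z_1(\infty)}}\bigr]\le\mathbb E[M^{K+1}]=\frac{3M/4}{1-M/4},
\]
where $M=\mathbb E[e^{\lambda\sqrt{X_1}}]$, finite as soon as $\lambda$ is small enough that $M<4$, which holds by dominated convergence since $M\downarrow 1$ as $\lambda\to 0$.

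For the tail of $Y_{N_t}$, the point is that although $N_t$ is not a stopping time, each $\tau_k$ is. Strong Markov at $\tau_k$ combined with the scaling identity $Y_k\stackrel{d}{=}Y_0$ (derived exactly as in the proof that the $Y_i$ are i.i.d.\ preceding the lemma) shows that $Y_k$ is independent of $\mathcal F_{\tau_k}$, and in particular of $\{\tau_k\le t\}$. A union bound therefore gives
\[
\mathbb P(Y_{N_t}\ge\epsilon\log t)\le\sum_{k\ge 0}\mathbb P(\tau_k\le t,\,Y_k\ge\epsilon\log t)=\mathbb P(Y_0\ge\epsilon\log t)\sum_{k\ge 0}\mathbb P(\tau_k\le t).
\]
The confinement estimate \eqref{confinment1} yields $\mathbb P(\tau_k\le t)\le C\exp(-c\,4^k/t)$, so splitting the sum at $k\approx\log_4 t$ gives $\sum_k\mathbb P(\tau_k\le t)=O(\log t)$. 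Chernoff applied to Lemma~\ref{lem-Y1} yields $\mathbb P(Y_0\ge y)\le C\exp(-\lambda\sqrt y)$ for all $y>0$, so combining the two bounds gives $\mathbb P(Y_{N_t}\ge\epsilon\log t)\le C\log t\cdot\exp(-\lambda\sqrt{\epsilon\log t})\le C'\exp(-c\sqrt{\log t})$ after absorbing the polynomial prefactor into the stretched exponential.

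For $\mathbb E[Y_{N_t}]=o(\log t)$, I would write $\mathbb E[Y_{N_t}]=\int_0^\infty\mathbb P(Y_{N_t}>y)\,dy$ and split at $y=\epsilon\log t$: the lower part contributes at most $\epsilon\log t$, while the upper part is controlled by the more general bound $\mathbb P(Y_{N_t}>y)\le C\log t\cdot\exp(-\lambda\sqrt y)$ obtained by repeating the argument of part (ii) with $y$ in place of $\epsilon\log t$. The substitution $u=\sqrt y$ turns the upper part into $O(\log t\cdot\sqrt{\log t}\cdot\exp(-\lambda\sqrt{\epsilon\log t}))=o(1)$. Sending $\epsilon\to 0$ gives the conclusion. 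The main technical difficulty lies in part (i): because $G^2$ is not locally integrable at the origin in dimension four, the iterates $u_k$ necessarily grow logarithmically near $0$, and the induction must be carried out tracking both the boundary value $u_k(e_1)$ and the precise $(1+\log(1/\|y\|))^k/k!$-type growth of $u_k$ near $0$ in order to keep the factorial denominators aligned and conclude $u_k(e_1)\le C^k$.
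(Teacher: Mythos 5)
Your proposal is correct in all three parts, but Part (i) takes a genuinely different route from the paper, and Parts (ii)–(iii) are streamlined variants of the paper's argument.

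For the exponential moment of $\sqrt{Z_1(\infty)}$, the paper simply observes that $Z_1(\infty)\le C\sum_{i\ge 1}\mathbf 1(\tau_{-i+1}<\infty)\,2^{2i}\ell(A_{-i})$ and reuses verbatim the dyadic-shell/H\"older/local-time argument of Lemma~\ref{lem-Y1}, which yields $\mathbb E[Z_1(\infty)^k]\le C^k(k!)^2$ and hence the $\sqrt{}$-exponential moment. Your excursion decomposition between $\partial\B(0,1)$ and $\partial\B(0,2)$, with $K$ geometric and independent of the i.i.d.\ excursion contributions $X_k$, is a clean alternative, and the bound $\mathbb E[e^{\lambda\sqrt{Z_1}}]\le\mathbb E[M^{K+1}]$ then closes nicely. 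Note however that all you actually need for $M<4$ is $\mathbb E[X_0^k]\le (k!)^2 C^k$, which the paper's mechanism would give you directly; you instead aim for the sharper $\mathbb E[X_0^k]\le k!\,C^k$ via a Green's-function iteration. That iteration is correct — after radializing (using $u_0\equiv1$ and rotational invariance of the kernel) one obtains the polynomial recursion $p_{k+1}(x)=e^{2x}\int_x^\infty e^{-2v}p_k(v)\,dv+\int_0^x p_k(v)\,dv$, and the rescaled coefficients $d_{k,j}:=j!\,[x^j]p_k$ satisfy $d_{k+1,i}=\sum_{j\ge i}d_{k,j}2^{-(j-i+1)}+d_{k,i-1}$, from which $d_{k,j}\le 2^k$ follows by a one-line induction, giving $u_k(e_1)\le C^k$ — but your sketch stops at naming this as the ``main technical difficulty'' without carrying it out. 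So you have chosen a longer road, proved (modulo the unfinished induction) a stronger bound than needed, and the remaining gap is real but fillable. The paper's shortcut is less delicate precisely because it settles for the weaker $(k!)^2$ growth.

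For the tail of $Y_{N_t}$, the key observation — that $Y_k$ is independent of $\mathcal F_{\tau_k}$ (hence of $\{\tau_k\le t\}$) by strong Markov plus rotational invariance, and equidistributed with $Y_0$ by scaling — is exactly what the paper uses; you avoid the paper's explicit appeal to the concentration of $N_t$ by summing the union bound over \emph{all} $k$ and observing $\sum_k\mathbb P(\tau_k\le t)=O(\log t)$, which is equivalent and slightly cleaner. For $\mathbb E[Y_{N_t}]=o(\log t)$, the paper splits the sum $\sum_i\mathbb E[\mathbf 1(\tau_i\le t<\tau_{i+1})Y_i]$ over three ranges of $i$ and uses Cauchy--Schwarz with the $N_t$-concentration bounds, whereas you integrate the tail $\int_0^\infty\mathbb P(Y_{N_t}>y)\,dy$ and split at $y=\epsilon\log t$; both are routine once Part (ii) is in hand. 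One minor point: the extension to general $y$ of the bound $\mathbb P(Y_{N_t}>y)\le C\log t\cdot e^{-\lambda\sqrt y}$ is indeed immediate from your Part (ii) argument, so that step is fine.
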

Let us postpone the proof of this lemma and continue the proof of Lemma~\ref{LD.D0}.

Actually the proof is almost finished. First, all the previous estimates and \eqref{sumiid} show that $D_0[0,t]$ and $D_{N_t}$ have asymptotically the same mean, i.e.
$$\lim_{t\to \infty}\  \frac 1{d(t)} \, \mathbb E[D_{N_t}] \ =\ 1.$$
Moreover, using the strong Markov property at times $\tau_i$, one obtains  
\begin{eqnarray*}
\mathbb E[D_{N_t}] &=& \sum_{i=0}^\infty \mathbb E[Y_i \1(i\le N_t)] \ =\ \sum_{i=0}^\infty \mathbb E[Y_i \1(\tau_i\le t)]\\
&=& \sum_{i=0}^\infty \mathbb E[Y_i] \mathbb P(\tau_i\le t)\ =\ \mathbb E[Y_0] \mathbb E[N_t].
\end{eqnarray*}

Then all that remains to do is to recall that $N_t$ is concentrated. 
Indeed, letting $n_t= \log t/(2\log 2)$, it follows from~\eqref{confinment1} that for any $\epsilon>0$, 
\begin{align}\label{Nt+}
\mathbb P(N_t\ge (1+\epsilon) n_t ) \ =\ \mathbb P\left(\sup_{s\le t} \|\beta_s\|>t^{(1+\epsilon)/2}\right) \ \le\ C\,\exp(-ct^\epsilon),
\end{align}
and it follows from~\eqref{confinment2} that 
\begin{align}\label{Nt-}
\mathbb P(N_t\le (1-\epsilon) n_t ) \ =\ \mathbb P\left(\sup_{s\le t} \|\beta_s\|\le \, t^{(1-\epsilon)/2}\right) \ \le\ C\,\exp(-ct^{\epsilon}),
\end{align}
for some positive constants $c$ and $C$. So for all $\epsilon<1$ we obtain $\E{N_t}\geq (1-\epsilon)n_t$ for all $t$ sufficiently large. 
Therefore,
$$d(t)\ \sim \ \mathbb E[D_{N_t}] \geq  c_0\, (1-\epsilon)n_t,$$
with $c_0=\mathbb E[Y_0]$. Note also that $\E{D_n}=c_0n$, for all $n\ge 0$. 
So now, gathering all previous estimates obtained so far, we deduce   
\begin{eqnarray*}
\mathbb P(D_0[0,t] \ge (1+\epsilon) d(t)) &\le&  \mathbb P\left(D_{\left(1+\frac{\epsilon}{4}\right)n_t} \ge \left(1+\frac{\epsilon}{2}\right) d(t)\right) + 
\mathbb P\left(N_t\ge \left(1+\frac \epsilon 4\right)n_t\right)  \\ &+&\mathbb P\left(Z_3(t) \ge \frac \epsilon 2 d(t)\right)
\le  C\exp (- c(\log t)^{1/3}), 
\end{eqnarray*}
and likewise for the lower bound: 
\begin{eqnarray*}
\mathbb P(D_0[0,t] \le (1-\epsilon) d(t)) &\le&  \mathbb P\left(D_{\left(1-\frac \epsilon 4\right)n_t} \le \left(1-\frac{\epsilon}{2}\right) d(t)\right) + 
\mathbb P\left(N_t\le (1-\frac \epsilon 4)n_t\right) \\&+& 
\mathbb P\left(Z_1(t)+Z_2(t) \ge \frac{\epsilon}{2} d(t)\right)
\le  C\exp (- c(\log t)^{1/3}), 
\end{eqnarray*}
and this concludes the proof of Lemma~\ref{LD.D0}. 
\hfill $\square$

Now to be complete it just remains to prove Lemma~\ref{lem-Y1} and~\ref{lem:YNt}.

\begin{proof}[\bf Proof of Lemma~\ref{lem-Y1}]
We first extend the definition of the $\tau_i$ and $A_i$ to negative indices: 
$$\tau_{-i} : =\inf\{s\ge \tau_0\ : \ \beta_s\in \partial \B(0,2^{-i})\},$$
and 
$$A_{-i} = \B(0,2^{-i+1})\setminus \B(0,2^{-i}),$$
for $i\ge 1$. Then with the notation of Section \ref{sec:lemmas} we get 
$$Y_0\ =\  \int_{\tau_0}^{\tau_1} G(\beta_s) \, ds \ \le\ C\, \left(\sum_{i\ge 1} \1(\tau_{-i+1}<\tau_1) 2^{2i}\ell(A_{-i}) + \tau_1\right). $$
Note that $\tau_1$ has an exponential tail by \eqref{confinment2}, so it suffices to bound the moments of the first sum. 
More precisely it amounts to proving that its $k$-th power is bounded by $C^k \, (k!)^2$. First,  
\begin{align*}
\mathbb E\left[\left(\sum_{i\ge 1} \1(\tau_{-i+1}<\tau_1) 2^{2i}\ell(A_{-i})\right)^k\right] \ =\ \sum_{i_1,\dots,i_k} 4^{\sum_{j=1}^k i_j}\, \E{\prod_{j=1}^k \1(\tau_{-i_j+1}<\tau_1) \ell(A_{-i_j})}.
\end{align*}
Next, by Holder's inequality we get
\begin{align*}\E{\prod_{j=1}^k \1(\tau_{-i_j+1}<\tau_1) \ell(A_{-i_j})}\ 
\leq \ \prod_{j=1}^k \E{\1(\tau_{-i_j+1}<\tau_1) \ell(A_{-i_j})^k}^{1/k}.
\end{align*}
Now by scaling and rotational invariance of the Brownian motion, for any $x\in \partial \B(0,2^{-i+1})$, 
$$\mathbb E_x[\ell(A_{-i})^k] \ =\ 4^{-k(i-1)}\, \mathbb E[\ell(A_{-1})^k].$$
Therefore using this and the strong Markov property, we get  
\begin{align*}
\E{\1(\tau_{-i_j+1}<\tau_1) \ell(A_{-i_j})^k}\ = \ \mathbb P(\tau_{-i_j+1}<\tau_1) \, 4^{-k\, (i_j-1)}\,  \mathbb E[\ell(A_{-1})^k].
\end{align*}
From~\eqref{localtime} we deduce that there is a constant $C>0$, such that 
\begin{align*}
\mathbb E\left[\left(\sum_{i\ge 1} \1(\tau_{-i+1}<\tau_1) 2^{2i}\ell(A_{-i})\right)^k\right]
\ \le & \ C^k \, k! \, \sum_{i_1,\dots,i_k}  \prod_{j=1}^k \mathbb P(\tau_{-i_j+1}<\tau_1)^{1/k}\\ 
=& \ C^k \, k!\, \left(\sum_{i\ge 1} \mathbb P(\tau_{-i+1}<\tau_1)^{1/k}\right)^k\\
\le & \ C^k \, k!\,  \left(\sum_{i\ge 1} \frac{1}{2^{2i/k}}\right)^k \\
\le & \ C^k \, (k!)^2,
\end{align*}	
using \eqref{hit.ball} at the third line. 
This concludes the proof of the first part of the lemma.

Now we prove the second part. Let $\epsilon>0$ be fixed. Since $Y_0$ is integrable, there exists $L\ge 1$, such that  
$\mathbb E[Y_0\1(Y_0>L)] \le  \epsilon /4$. Then using Bernstein's inequality and the first part of the lemma at the third line, we obtain for some positive constants $C$ and $c$,  
\begin{align*}
\begin{split}
\pr{\left| \sum_{i=0}^n(Y_i-\mathbb E[Y_i]) \right|\, >\, \epsilon (n +1)}\ &\le \  \pr{\exists i\le n\, :\,  Y_i>L}+ \pr{ \left| \sum_{i=0}^n(Y_i\1(Y_i<L)-\E{Y_i}) \right|\, > \, \epsilon (n+1)}\\
\le  \ (n+1) \pr{Y_0>L}&+ \pr{\left| \sum_{i=0}^n(Y_i\1(Y_i<L)-\E{Y_i\1(Y_i<L)}) \right|\ > \frac{\epsilon}{2} (n+1)}\\
\le & \ C\left(n\exp(-\lambda\, \sqrt L)+ \exp\left(-c\, \frac{\epsilon^2 n}{  \E{Y_0^2}+ L \epsilon }\right)\right).
\end{split}
\end{align*}
The desired result follows by taking $L= (\epsilon n)^{2/3}$, and $\epsilon n$ large enough.  
\end{proof}

\begin{proof}[\bf Proof of Lemma~\ref{lem:YNt}]
We start with the first part. Exactly as in the proof of Lemma~\ref{lem-Y1}, and using the same notation, one has  
$$Z_1(\infty)\ = \ \int_{\tau_0}^\infty \1(\|\beta_s\|\le 1) G(\beta_s)\, ds \ \le\ C\, \sum_{i\ge 1} \1(\tau_{-i+1}<\infty) 2^{2i}\ell(A_{-i}),$$
and the result follows exactly as in the previous lemma.

Now for the second part, recall the notation introduced at the end of the proof of Lemma~\ref{LD.D0}. Then 
using \eqref{Nt+}, \eqref{Nt-} and Lemma \ref{lem-Y1}, we get 
\begin{eqnarray*}
\mathbb P(Y_{N_t} \ge \epsilon \log t) &\le& \mathbb P(|N_t-n_t| \ge \epsilon \log t) +\mathbb P\left(\exists i\in [n_t-\epsilon \log t,n_t+\epsilon \log t]  \, :\,  Y_i \ge \epsilon \log t\right)\\
&\le & C\, \exp(-c\, t^\epsilon) + 2\epsilon \log t\cdot  \mathbb P(Y_0\ge \epsilon \log t)\\
 &\le & C\, \exp(-c\, t^\epsilon) + C\epsilon (\log t)\, \exp (-c\sqrt {\epsilon \log t}).  
\end{eqnarray*}
Finally we compute the expectation of $Y_{N_t}$ as follows: for any fixed $\epsilon>0$,   
\begin{eqnarray*}
\mathbb E[Y_{N_t}]&=& \sum_{i\ge 0}\mathbb E[ \1(\tau_i\le t< \tau_{i+1}) Y_i]\\
&\le  &\sum_{i\le n_t-\epsilon \log t}\mathbb E[ \1(t<\tau_{i+1}) Y_i] + \sum_{i\ge n_t+ \epsilon \log t} \mathbb E[\1(\tau_i\le t)Y_i] + 2\epsilon (\log t)\mathbb E[Y_0].
\end{eqnarray*}
Then using Cauchy-Schwartz, \eqref{Nt+} and \eqref{Nt-}, we get 
\begin{eqnarray*}
\mathbb E[Y_{N_t}]\ \le\ C\, n_t \exp(-c t^{\epsilon})\E{Y_0^2}^{1/2}  + \E{Y_0}\sum_{j\ge \epsilon \log t} \exp(-c 2^j)  + 2\epsilon (\log t)\mathbb E[Y_0], 
\end{eqnarray*}
and the result follows. 
\end{proof}

%%%%%%%%%%%%%%%%%%%%%%%%%%%%%%%%%%%%%%%%%%%%%%%%%%%%%%%%%%%%%%%%%%%%%%%%

\section{Upward Large Deviation}\label{Sec.updev}
Using our estimate on the expected capacity, we obtain
a rough estimate on the upward large deviation, which we use
in the next section when bounding the square of the cross-terms. 
Our estimate
improves a recent inequality of Erhard and Poisat: inequality (5.55)
in the proof of their Lemma 3.7 in \cite{EP}.  
They estimated the probability that the capacity of the sausage exceeds 
by far its mean value and obtained polynomial bounds. 
\begin{proposition}\label{prop-ULD}
There exists a constant $c>0$, such that for any $a>0$, there is $\kappa>0$ satisfying 
\begin{align*}
\pr{\cc{W_1[0,t]}-\E{\cc{W_1[0,t]}}> a\frac{t}{\log t}} \ \le\ 
\exp\left(-c\, a\, t^\kappa\min(1,\frac{a}{\log t})\right).
\end{align*}
Moreover, the inequality holds true for any $a\ge 1$, with $\kappa=1/1000$.  
\end{proposition}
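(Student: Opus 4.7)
The plan is to combine subadditivity of capacity with a block decomposition of the Brownian path and a sub-exponential Bernstein-type concentration inequality.

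Fix a small exponent $\kappa_0\in(0,1)$ to be chosen in terms of $a$, set $T=t^{1-\kappa_0}$, and let $N=\lfloor t/T\rfloor$. Partitioning $[0,t]$ into $N$ consecutive blocks of length $T$ and using the subadditivity of capacity yields
\[
\cc{W_1[0,t]}\leq \sum_{i=0}^{N-1} X_i,\qquad X_i := \cc{W_1[iT,(i+1)T]}.
\]
By the independence of Brownian increments on disjoint intervals together with the translation invariance of capacity, the $(X_i)_{i=0,\ldots,N-1}$ are i.i.d.\ copies of $\cc{W_1[0,T]}$. Proposition~\ref{prop-expected} then gives $\E{X_1}\sim 4\pi^2 T/\log T$, so
\[
N\E{X_1}-\E{\cc{W_1[0,t]}}\sim 4\pi^2 t\Big(\frac{1}{\log T}-\frac{1}{\log t}\Big) = \frac{4\pi^2 \kappa_0\, t}{(1-\kappa_0)\log t}.
\]
Choosing $\kappa_0$ small enough in terms of $a$ (of order $a\wedge 1$; in particular $\kappa_0=1/10$ is adequate for every $a\geq 1$) makes this overshoot at most $at/(2\log t)$, so it remains to bound $\bP(\sum(X_i-\E{X_i})\geq at/(2\log t))$.

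For the concentration step, monotonicity of capacity and the explicit value $\cc{\B(0,r)}=2\pi^2 r^2$ give $X_i\leq 2\pi^2(\sup_{0\leq s\leq T}\|\beta_{iT+s}-\beta_{iT}\|+1)^2$; combined with the Gaussian tail~\eqref{confinment1} this shows that $X_i$ is sub-exponential with Orlicz norm $\|X_i\|_{\psi_1}\leq C\,T$. The standard Bernstein inequality for a sum of $N$ i.i.d.\ sub-exponential random variables then yields
\[
\bP\Big(\sum_{i=0}^{N-1}(X_i-\E{X_i})\geq x\Big)\leq \exp\Big(-c\min\big(x^2/(NT^2),\, x/T\big)\Big).
\]
Substituting $x=at/(2\log t)$, $N\asymp t^{\kappa_0}$, and $T=t^{1-\kappa_0}$ produces a bound of the form $\exp(-c(at^{\kappa_0}/\log t)\min(a/\log t,1))$, which implies the stated estimate by taking any $\kappa<\kappa_0$ (since $t^{\kappa_0}/\log t\geq t^\kappa$ for $t$ large enough); in particular $\kappa=1/1000$ is admissible for every $a\geq 1$.

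The main obstacle is that the sub-exponential norm bound $\|X_i\|_{\psi_1}\lesssim T$ is much cruder than the natural scale $T/\log T$ of $\E{X_1}$. A sharper norm bound would push $\kappa$ towards $1$, but obtaining it is essentially as hard as proving~\eqref{ineq-var} directly, which itself relies on the present Proposition. This forces the use of a sub-polynomial exponent $t^\kappa$ with $\kappa<1$ and the absorption of one stray $\log t$ factor into a small decrease of $\kappa$.
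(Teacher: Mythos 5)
Your proof follows the same strategy as the paper's: subadditivity to split $[0,t]$ into $\sim t^{\kappa_0}$ i.i.d.\ blocks, control of the overshoot $N\E{X_1}-\E{\cc{W_1[0,t]}}$ via Proposition~\ref{prop-expected}, a sub-exponential moment bound for a single block at scale $T$, and a Bernstein-type inequality. The one place you genuinely diverge is in obtaining the sub-exponential norm bound $\|X_i\|_{\psi_1}\lesssim T$: you enclose $W_1[iT,(i+1)T]$ in a ball of radius $\sup_s\|\beta_{iT+s}-\beta_{iT}\|+1$ and use $\cc{\B(0,r)}=2\pi^2 r^2$ together with~\eqref{confinment1}, whereas the paper invokes Lemma~\ref{lem.epsilon} to compare $\cc{W_1[0,T]}$ with $|W_{4/3}[0,T]|$ and then computes moments of the sausage volume via the Markov property and the Kesten--Spitzer--Whitman asymptotics. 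Your ball-envelope route is more elementary (it bypasses Lemma~\ref{lem.epsilon} and the volume moment computation) and, because the diameter squared has the same $\exp(-cx/T)$ tail as the volume, it yields the same order of sub-exponential norm, so it loses nothing for this proposition. The only real slip is numerical: with the overshoot $\approx \frac{4\pi^2\kappa_0}{1-\kappa_0}\cdot\frac{t}{\log t}$ and $4\pi^2\approx 39.5$, the choice $\kappa_0=1/10$ gives an overshoot $\approx 4.4\,t/\log t$, which is far larger than $at/(2\log t)$ at $a=1$; you need $\kappa_0$ on the order of $a/(8\pi^2)$ (so, say, $\kappa_0\le 1/100$ when $a\ge 1$), after which $\kappa=1/1000<\kappa_0$ still works as you intend. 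With that correction the argument is sound.
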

\begin{remark}\label{rem.Lp} \em{
The proposition shows in particular that the process $\left(\frac{\log t}{t} \cp(W_1[0,t]),\, t\ge 2\right)$, 
is bounded in $L^p$, for all $p\ge 1$. }
\end{remark}

\begin{proof}[\bf Proof of Proposition~\ref{prop-ULD}]
Let $a>0$ be fixed. Using that the capacity is subadditive,  one has 
for any $t>0$ and $L\ge 1$, 
\begin{align}\label{ULD-2}
\cc{W_1[0,t]}\ \le\  \sum_{k=0}^{2^L-1} \cc{W_1\left[k\frac{t}{2^L},(k+1)\frac{t}{2^L}\right]}.
\end{align}
To simplify notation now, we write 
$$X= \cc{W_1[0,t]}, \quad \text{and}\quad X_k = \cc{W_1\left[k\frac{t}{2^L},(k+1)\frac{t}{2^L}\right]},\quad \text{for }k\ge 0.$$
Note that the $(X_k)$ are independent and identically distributed. 
Now choose $L$ such that $2^L=[t^\kappa]$, with $\kappa<1$, some positive constant to be fixed later. 
Then for $t$ large enough, Proposition~\ref{prop-expected} gives  
\begin{align*}
\E{X}\ge 4\pi^2(1-2^{-10}a) \frac{t}{\log t},\quad
\text{and}\quad \E{X_1}\le 4\pi^2(1+2^{-10}a) 
\frac{t/2^L}{\log(t/2^L)}. 
\end{align*}
Plugging this into~\reff{ULD-2} we obtain  
\begin{align*}
X- \E{X} \le \sum_{k=0}^{2^L-1}  (X_k-\E{X_k}) +4\pi^2\frac{t}{\log t}
\Big(\frac{(1+2^{-10}a)}{1-\log(2^L)/\log t} -(1-2^{-10}a)\Big).
\end{align*}
Now when $a \le 1$, by choosing $\kappa$ small enough (depending on $a$), one can make 
the last term above smaller than $ a t/(2\log t)$, and when $a\ge 1$, it is easy to check that this is also true with $\kappa=1/1000$. 
Thus for this choice of $\kappa$, 
\begin{align}
\label{LDcap.1}
\pr{X-\E{X} \ge a \frac{t}{\log t}} \ \le\ \pr{ \sum_{k=0}^{2^L-1} (X_k-\E{X_k})\ge \frac{a}{2} 
\frac{t}{\log t}}.
\end{align}
Now we claim that $X_1/(t/2^L)$ has a finite exponential moment. Indeed, thanks to 
Lemma~\ref{lem.epsilon}, it suffices to compute the moments of the volume of a Wiener sausage. But this is easily obtained, using a similar argument as for the local time of balls, see~\eqref{localtime}. To be more precise, for $z\in\R^4$, set 
$$\sigma_z:=\inf\{s\ge 0\ :\ \|\beta_s-z\|\le 1\}.$$
Then  for any $t\ge 1$ and $k\ge 1$, one has using the Markov property 
 \begin{eqnarray*}
\mathbb E[|W_1[0,t]|^k] & =&  \int \dots \int \bP(\sigma_{z_1}\le t,\dots,\sigma_{z_k}\le t)\, dz_1\dots dz_k\\ 
&=& k! \int\dots\int \bP(\sigma_{z_1}\le \dots \le \sigma_{z_k}\le t)\, dz_1\dots dz_k\\
&\le & k!\,  \mathbb E[|W_1[0,t]|]^k.
\end{eqnarray*}
Now recall a classical result of Kesten, Spitzer, and Whitman on the 
volume of the Wiener sausage, 
(see e.g. \cite{LG88} or \cite{LG.90} and references therein).
\begin{equation*}
\lim_{t\to \infty} \frac{1}{t} \cdot \mathbb E[|W_1(0,t)|] = \cp(\mathcal B(0,1)) = 2\pi^2. 
\end{equation*}
As a consequence, for some constant $C$, we have
$\mathbb E[|W_1[0,t]|^k]\le  C^k k! t^k$, and
there exists $\lambda_0>0$, such that 
\begin{eqnarray}
\label{LDcap.2}
\sup_{t\ge 1}\, \mathbb E\left[\exp\left(\lambda_0 \, \frac{\cp(W_1[0,t])}{t}\right)\right]\ < \ +\infty.
\end{eqnarray}

Now from \eqref{LDcap.1} and \eqref{LDcap.2} it is quite standard to deduce the result of the proposition. 
But let us give some details for the reader's convenience.  
First, using a Taylor expansion, one has for any $x\in \R$, and any integer $n\ge 0$, 
\[
\left|e^x-\sum_{i=0}^n \frac{x^i}{i!}\right|\ \le\ 
e^{|x|} \frac{|x|^{n+1}}{(n+1)!}.
\]
Applying this with $n=2$, shows that for any $\lambda\ge 0$, and any nonnegative random
variable $Y$ with finite mean,
\begin{align*}
\left|e^{\lambda (Y-\mathbb{E}[Y]) }-\sum_{i=0}^2 \frac{\lambda^i (Y-\E{Y})^i}{i!}\right|\ \le\ 
\frac{\lambda^3}{3! }\,  |Y-\E{Y}|^{3}\, e^{\lambda |Y-\mathbb{E}[Y]|}.
\end{align*}
Therefore, if we assume in addition that $\mathbb E[e^Y]$ is finite and that $\lambda\le 1/2$, we obtain 
\begin{align*}
\mathbb E[e^{\lambda (Y-\mathbb{E}[Y])}]\ \le\  1+ \frac{\lambda^2}{2}\mathbb E[(Y-\E{Y})^2]+
C_1 \lambda^3\ \le \ e^{C_2 \lambda^2},
\end{align*}
for some constants $C_1$ and $C_2$ (that only depend on $\mathbb E[e^Y]$). 
Now we apply the previous bound to $Y=\lambda_0 X_0/(t/2^L)$, with $\lambda_0$ as in \eqref{LDcap.2}. 
Then using Chebychev's exponential inequality, we get for any $\lambda\in [0,1/2]$,  
\begin{align*}
\begin{split}
\pr{ \sum_{k=0}^{2^L-1} \frac{(X_k-\E{X_k})}{t/2^L} \, \ge\,  \frac{a}{2} \frac{2^L}{\log t} }\ \le &\ \exp\big( -\frac{\lambda \lambda_0a}{2\log t} 2^L\big)
\prod_{k=0}^{2^L-1} \mathbb E\left[\exp\left(\lambda \lambda_0\frac{X_k-\E{X_k}}{t/2^L}\right)\right] \\
\le &\ \exp\left( -\left(\frac{\lambda \lambda_0a}{2\log t}-C_2\lambda^2\right)2^L
\right),
\end{split}
\end{align*}
and the result follows by optimizing in $\lambda$. 
\end{proof}

%%%%%%%%%%%%%%%%%%%%%%%%%%%%%%%%%%%%%%%%%%%%%%%%%%%%%%%%%%%%%%%%%%%%%%%%

\section{Intersection of Sausages and Cross-terms}
\label{subsec.intersection}
\subsection{Intersection of Wiener sausages}
Our aim in this Section is to obtain some bounds on the probability 
of intersection of two Wiener sausages.
Then, in the next section, we apply these 
results to bound the second moment of the cross-term in the decomposition \eqref{chi.r} of the capacity of two Wiener sausages.

We consider two independent Brownian motions $(\beta_t,t\ge 0)$ and $(\til \beta_t,t\ge 0)$ starting respectively from $0$ and $z$, and denote their corresponding Wiener sausages by $W$ and $\widetilde W$.  
We estimate the probability that $W_{1/2}[0,t]$ 
intersects $\widetilde W_{1/2}[0,\infty)$,  
when $\|z\|$ is of order $\sqrt t$ up to logarithmic factors.

Such estimates have a long history in probability. Let us mention three occurrences of closely related estimates, which are however not
enough to deduce ours. Aizenman in~\cite{A} obtained a bound for
the Laplace transform integrated over space. Pemantle, Peres and Shapiro \cite{PPS} obtained that for
any $z\in \R^4$ and $t$ large enough, almost surely 
\[
\frac{c t}{\log t} \inf_{y\in \beta[0,t]}\|z-y\|^{-2}\le
\bP_{0,z}\left(W_{1/2}[0,t]\cap\widetilde W_{1/2}[0,\infty)
\not=\emptyset\ \Big|\ \beta\right)
\le \frac{C t}{\log t} \sup_{y\in \beta[0,t]}\|z-y\|^{-2}.
\]
Lawler has obtained also similar results in the discrete setting
for random walks. Finally, our result reads as follows.

\begin{proposition}\label{prop.hit}
For any $\alpha>0$, there exist positive constants $C$ and $t_0$, such that for all $t>t_0$ and $z\in \R^4$, 
with $ t/(\log t)^\alpha \le \|z\|^2 \le  t \cdot(\log t)^\alpha$,  
\begin{equation}
\label{hit-2wieners}
\prstart{W_{1/2}[0,t]\cap\widetilde W_{1/2}[0,\infty)\not=\emptyset}{0,z} \ \le\ C \cdot\left(1\wedge \frac{t}{\|z\|^2}\right) \cdot \frac{(\log \log t)^2}{\log t}. 
\end{equation}
\end{proposition}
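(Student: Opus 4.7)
The plan is to adapt the Lawler-type argument used for the upper bound in the proof of Proposition~\ref{prop-expected} of Section~\ref{Sec.expcap}, but to track the dependence on the starting point $z$ of $\til\beta$ so as to obtain a \emph{pointwise} estimate, rather than only the asymptotic as $\|z\|\to\infty$. Setting $\tau:=\inf\{s\ge 0:\,\til\beta_s\in W_1[0,t]\}$, we have $\{W_{1/2}[0,t]\cap\til W_{1/2}[0,\infty)\neq\emptyset\}=\{\tau<\infty\}$, and for any fixed $\epsilon\in(0,1)$ I would start from the identity
\[
\prstart{\tau<\infty}{0,z}\ \le\ \frac{\estart{R[0,(1+\epsilon)t]}{0,z}}{\escond{R[0,(1+\epsilon)t]}{\tau<\infty}{0,z}},
\]
with $R$ as in~\eqref{def-R}.

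The numerator is handled by a direct Gaussian calculation. Writing $\til\beta_s-\beta_u$ as a Gaussian with mean $z$ and variance $(s+u)I$ and integrating first in $s\in[0,\infty)$ and then in $u\in[0,t]$, one obtains
\[
\estart{R[0,(1+\epsilon)t]}{0,z}\ \le\ \frac{C}{\|z\|^2}\int_0^{(1+\epsilon)t}\!\!\big(1-e^{-\|z\|^2/(2u)}\big)\,du\ \le\ C\Big(1\wedge\frac{t}{\|z\|^2}\Big)+C\log\!\Big(1\vee\frac{t}{\|z\|^2}\Big),
\]
which, for $z$ in the prescribed range, is at most $C(\log\log t)(1\wedge t/\|z\|^2)$. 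An identical computation gives the same estimate for $\estart{R_2^\delta[0,t]}{0,z}$.

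The denominator is the delicate part, and I would recycle essentially verbatim the good/bad~$\sigma$ decomposition of Subsection~\ref{Subsec:proof}. After discretising the $\beta$-sausage at scale $\delta>0$, let $\sigma:=\inf\{i:\|\til\beta_\tau-Z_i^\delta\|\le 1\}$ and call an integer $i$ \emph{good} if
\[
\inf_{\|x\|\le 1}\sum_{j\ge i}\big(\tau_{j+1}^\delta\wedge(\tau_i^\delta+\epsilon t)-\tau_j^\delta\wedge(\tau_i^\delta+\epsilon t)\big)\,G^*(x,Z_j^\delta-Z_i^\delta)\ \ge\ (1-\epsilon)\,d(t),
\]
where $d(t)\sim(\log t)/8$ by Lemma~\ref{lem-D0}. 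Lemmas~\ref{lem-D0},~\ref{lem-D} and~\ref{LD.D0} yield $\pr{i\text{ bad}}\le C\exp(-c(\log t)^{1/3})$. Applying the strong Markov property at $\tau_\sigma^\delta$ on the event $\{\sigma\text{ good}\}$ gives $\escond{R[0,(1+\epsilon)t]}{\tau<\infty,\,\sigma\text{ good}}{0,z}\ge(1-\epsilon)d(t)$, while the same decomposition as in~\eqref{bad-9}, combined with Lemma~\ref{lem:infint}, provides
\[
\prstart{\tau<\infty,\,\sigma\text{ bad}}{0,z}\ \le\ \frac{C}{\delta^2}\,\estart{R_2^\delta[0,t]}{0,z}\,\exp\!\big(-c(\log t)^{1/3}\big),
\]
which, thanks to the above bound on $\estart{R_2^\delta[0,t]}{0,z}$, is negligible compared to the target. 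Combining the two contributions gives $\prstart{\tau<\infty}{0,z}\le C(\log\log t)(1\wedge t/\|z\|^2)/\log t$, which is stronger than~\eqref{hit-2wieners}.

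The main technical obstacle lies in the explicit $z$-dependent Gaussian estimate on $\estart{R[0,t]}{0,z}$ in the crossover regime $\|z\|^2\asymp t$ (where the simple asymptotic $\estart{R[0,t]}{0,z}/G(0,z)\sim \pi^2 t/2$ of Lemma~\ref{lem-ER} is no longer sharp), and in verifying that every step of the good/bad-$\sigma$ argument of Subsection~\ref{Subsec:proof}---in particular the Cauchy--Schwarz bound leading to~\eqref{bad-5} and the comparison with $R_2^\delta$ via Lemma~\ref{lem:infint}---remains valid uniformly for $z$ at finite distance rather than only in the limit $\|z\|\to\infty$.
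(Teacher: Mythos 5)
Your proposal is correct in its main thrust, but it takes a genuinely different route from the one the paper uses. The paper does not re-run the Lawler good/bad-$\sigma$ machinery pointwise in $z$. Instead it splits Proposition~\ref{prop.hit} into two lemmas: for $\|z\|\gtrsim\sqrt{t/\log t}$ (Lemma~\ref{lem.hit.1}), it first peels off the event $\{H_{\B(z,r)}<\infty\}$ via~\eqref{hit.ball} with $r=\sqrt{t/\log t}$, then restricts to $\{\cp(W_1[0,t])\le 8\pi^2 t/\log t\}$ using the upward large-deviation bound of Proposition~\ref{prop-ULD}, conditions on the sausage and applies Lemma~\ref{cond.hit}, and finishes by a short computation of $\E{\mathbf 1(d(z,\beta[0,t])\ge r)/d(z,\beta[0,t])^2}$ via~\eqref{hit.ball}; for smaller $\|z\|$ (Lemma~\ref{lem.hit.2}) it chains Lemma~\ref{lem.hit.1} over $O(\log\log t)$ dyadic time blocks $[t_k,t_{k+1}]$, which is precisely where the extra factor of $\log\log t$ comes from. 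Your approach instead tracks the $z$-dependence through the Lawler argument of Subsection~\ref{Subsec:proof}. Its key new ingredient is the explicit Gaussian estimate $\estart{R[0,t]}{0,z}\le C\big((1\wedge t/\|z\|^2)+\log(1\vee t/\|z\|^2)\big)$, which replaces the $\|z\|\to\infty$ limit of Lemma~\ref{lem-ER}; once that is in hand, the good/bad-$\sigma$ bounds (the lower bound $(1-\epsilon)d(t)$ on the conditional expectation, and the factorised bound~\eqref{bad-9} with the $R_2^\delta$-comparison via Lemma~\ref{lem:infint}) are indeed pointwise in $z$, so the argument carries over. There are two small notational slips — you write $R$ where the good/bad-$\sigma$ mechanism actually produces $R^\delta$, and the first display should be applied separately to $\{\sigma\text{ good}\}$ after subtracting the bad-$\sigma$ contribution — but you handle both correctly later, so these are cosmetic. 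The paper's route is shorter and more modular (it simply reuses Lemma~\ref{cond.hit} and Proposition~\ref{prop-ULD}, both of which are already needed elsewhere), while your route, at the cost of re-verifying the uniformity of the Subsection~\ref{Subsec:proof} estimates, yields the sharper exponent $\log\log t$ in place of $(\log\log t)^2$ in~\eqref{hit-2wieners}.
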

We divide the proof of Proposition~\ref{prop.hit} into two lemmas. The first one deals with $\|z\|$ {\it large}.
\begin{lemma}
\label{lem.hit.1}
For any $\alpha>0$, there exist positive constants $C$ and $t_0$, such that for all $t>t_0$ and all $z\in \R^4$ nonzero, 
with $\|z\| \le \sqrt t \cdot(\log t)^\alpha$,  
\begin{equation}
\label{hitsaus}
\bP_{0,z}\left(W_{1/2}[0,t]\cap\widetilde W_{1/2}[0,\infty)\not=\emptyset \right) \ \le\ C \cdot \frac {t }{\|z\|^2}\cdot \frac{\log \log t}{\log t}. 
\end{equation}
\end{lemma}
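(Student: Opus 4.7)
The plan is to combine the deterministic hitting-probability estimate of Lemma~\ref{cond.hit} with the upward large deviation of Proposition~\ref{prop-ULD}. First note that two sausages intersect iff the two driving Brownian paths ever come within distance $1$, i.e.\ $W_{1/2}[0,t]\cap\widetilde W_{1/2}[0,\infty)\neq\emptyset$ iff $\widetilde\beta[0,\infty)\cap W_1[0,t]\neq\emptyset$. Conditioning on $\beta$, Lemma~\ref{cond.hit} gives, writing $d:=d(z,W_1[0,t])$,
\[
\prstart{H_{W_1[0,t]}<\infty}{z}\ \le\ \min\!\left(1,\ \frac{\cp(W_1[0,t])}{2\pi^2\,d^2}\right).
\]
Fix $C_0>4\pi^2$; by Proposition~\ref{prop-expected} and Proposition~\ref{prop-ULD} applied with $a=C_0-4\pi^2\geq 1$, the event $\mathcal{E}=\{\cp(W_1[0,t])\leq C_0 t/\log t\}$ has $\pr{\mathcal{E}^c}\leq\exp(-c\,t^{\kappa'})$ for some $\kappa'>0$, which is negligible for our purposes. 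We may further assume $\|z\|^2\geq C_1 t\log\log t/\log t$, since otherwise the claimed bound already exceeds $1$.

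Set $d_*:=\sqrt{C_0 t/(2\pi^2\log t)}$. On $\mathcal{E}$ the right-hand side above is at most $\1_{\{d\leq d_*\}}+(d_*/d)^2\,\1_{\{d>d_*\}}$. For the first term, $d\leq d_*$ forces $\beta$ to enter $\B(z,d_*+1)$ by time $t$, so by \eqref{hit.ball}, $\pr{d\leq d_*}\leq (d_*+1)^2/\|z\|^2\leq C\,t/(\|z\|^2\log t)$. For the second, using the layer-cake identity followed by \eqref{hit.ball} on $r\leq\|z\|/2$ and the trivial bound on $r>\|z\|/2$,
\[
\E{d^{-2}\,\1_{\{d>d_*\}}}\ \leq\ \int_{d_*}^{\|z\|/2}\frac{2}{r^{3}}\cdot\frac{(r+1)^{2}}{\|z\|^{2}}\,dr+\int_{\|z\|/2}^{\infty}\frac{2}{r^{3}}\,dr\ \leq\ \frac{C}{\|z\|^{2}}\Big(1+\log\frac{\|z\|}{d_{*}}\Big).
\]
The polylogarithmic hypothesis $\|z\|\leq\sqrt t(\log t)^{\alpha}$ together with $d_*\asymp\sqrt{t/\log t}$ forces $\|z\|/d_{*}\leq C(\log t)^{\alpha+1/2}$ and hence $\log(\|z\|/d_{*})\leq C\log\log t$. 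Multiplying by $\cp(W_1[0,t])/(2\pi^2)\leq C t/\log t$ yields the second contribution of order $t\log\log t/(\|z\|^{2}\log t)$, and combining with the first term and $\pr{\mathcal{E}^c}$ completes the proof.

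The main delicate point is recognising where the $\log\log t$ comes from: it is exactly the logarithmic integral $\int_{d_*}^{\|z\|/2}dr/r=\log(\|z\|/(2d_*))$, whose size is controlled by the combination of the polylogarithmic bound on $\|z\|$ and the concentration scale $d_*$ of $\cp(W_1[0,t])$. The upward large-deviation bound is essential: a direct Cauchy--Schwarz estimate on $\E{\cp(W_1[0,t])/d^{2}}$ would require $\E{d^{-4}}<\infty$, which fails since $\beta$ can come arbitrarily close to $z$, so a deterministic cutoff on $\cp(W_1[0,t])$ is needed to clip the singularity at $d\to 0$.
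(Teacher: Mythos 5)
Your proof is correct and follows essentially the same strategy as the paper: condition on $\beta$, apply Lemma~\ref{cond.hit} to get the bound $\min(1,\cp(W_1[0,t])/(2\pi^2 d^2))$ with $d=d(z,W_1[0,t])$, use Proposition~\ref{prop-ULD} to work on the high-probability event $\{\cp(W_1[0,t])\le C_0 t/\log t\}$, and then estimate $\E{d^{-2}\,\1_{\{d>d_*\}}}$ via a layer-cake computation and \eqref{hit.ball}, with the $\log\log t$ factor emerging from $\log(\|z\|/d_*)\lesssim\log\log t$. The paper's proof uses the cutoff $r=\sqrt{t/\log t}$ and the distance to the curve $\beta[0,t]$ rather than to the sausage, but these are the same manoeuvre up to constants; your closing remark about why the deterministic cap on $\cp(W_1[0,t])$ is needed to tame the $d\to 0$ singularity is an accurate diagnosis of the same issue the paper is addressing by first discarding the event $\{d\le r\}$.
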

The second lemma improves on Lemma~\ref{lem.hit.1} in the region $\|z\|$ {\it small}.
\begin{lemma}
\label{lem.hit.2}
For any $\alpha>0$, there exist positive constants $C$ and $t_0$, 
such that for all $t>t_0$ and all $z\in \R^4$, with $t\cdot (\log t)^{-\alpha}\le \|z\|^2 \le t $,  
\begin{equation}
\label{hitsaus2}
\bP_{0,z}\left(W_{1/2}[0,t]\cap\widetilde W_{1/2}[0,\infty)\not=\emptyset \right) \ \le\ C\cdot \frac{(\log \log t)^2}{\log t}. 
\end{equation}
\end{lemma}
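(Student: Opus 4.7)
The plan is to reduce Lemma~\ref{lem.hit.2} to Lemma~\ref{lem.hit.1} by decomposing $\widetilde\beta[0,\infty)$ using the strong Markov property. For an appropriately chosen time $T$, I would split
\[
\{W_{1/2}[0,t]\cap\widetilde W_{1/2}[0,\infty)\ne\emptyset\}\ \subseteq\ \{W_{1/2}[0,t]\cap\widetilde W_{1/2}[0,T]\ne\emptyset\}\cup\{W_{1/2}[0,t]\cap\widetilde W_{1/2}[T,\infty)\ne\emptyset\}.
\]
For the tail, by the strong Markov property at $T$ the piece $\widetilde\beta[T,\infty)$ is a fresh Brownian motion starting at $\widetilde\beta_T$, independent of $\beta$; applying Lemma~\ref{lem.hit.1} to this new starting point and integrating against the Gaussian transition kernel $p_T(z,\cdot)$ yields a contribution of order $(t/T)(\log\log t)/\log t$. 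For the initial piece, the natural move is to exploit the symmetry $\beta\leftrightarrow\widetilde\beta$ (combined with a translation by $-z$), and then enlarge $\widetilde W_{1/2}[0,t]\subset\widetilde W_{1/2}[0,\infty)$; this converts the question into one to which Lemma~\ref{lem.hit.1} applies with $t$ replaced by $T$, giving a bound of order $(T/\|z\|^2)(\log\log T)/\log T$.

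Optimizing the choice of $T$ (e.g.\ $T=\sqrt{t\|z\|^2}$) balances the two contributions to $\sqrt{t/\|z\|^2}\cdot(\log\log t)/\log t$, which for $\|z\|^2\ge t/(\log t)^\alpha$ is of order $(\log t)^{\alpha/2}(\log\log t)/\log t$. This matches $(\log\log t)/\log t$ when $\alpha=0$ but degrades with $\alpha$, so a single split is not enough in general. To upgrade to the claimed bound $(\log\log t)^2/\log t$ uniformly in $t/(\log t)^\alpha\le\|z\|^2\le t$, I would iterate this decomposition over a dyadic sequence of spatial scales $R_k=2^k\sqrt t$, using the hitting-ball formula~\eqref{hit.ball} to turn successive sphere crossings into geometric factors. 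The two factors of $\log\log t$ in the final bound then arise from two separate applications of Lemma~\ref{lem.hit.1}: one for the tail of $\widetilde\beta$ after the last relevant scale, and one---via the symmetry swap---for the bulk of the excursions between scales.

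The main obstacle is controlling the intersection probability in the regime where $\widetilde\beta$ is still confined near its starting point $z$ inside $\B(0,\sqrt t)$. Lemma~\ref{lem.hit.1} gives no useful information there, and extending $\widetilde W_{1/2}[0,t]$ to $\widetilde W_{1/2}[0,\infty)$ in the symmetry swap reproduces the original problem. To treat this case I expect we must use Lemma~\ref{cond.hit} together with the capacity bound $\cp(W_{1/2}[0,t])\le C\,t/\log t$ (from Proposition~\ref{prop-expected} combined with the large-deviation estimate of Proposition~\ref{prop-ULD}), after first truncating on the event---of small probability by~\eqref{hit.ball}---that the trajectory of $\beta$ enters a small neighbourhood of $z$. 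This is the delicate potential-theoretic step, since $\E{1/d(z,W_{1/2}[0,t])^4}$ is not obviously finite and a naive Cauchy--Schwarz bound therefore fails; the truncation must be performed at a scale $r_0$ carefully optimized against $\|z\|$.
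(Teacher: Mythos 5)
Your basic strategic instinct is sound: a single split does not give the claimed bound, and an iteration running for $O(\log\log t)$ steps, each paying a factor $O(\frac{\log\log t}{\log t})$ via Lemma~\ref{lem.hit.1}, is what is needed. But both the mechanics and the supposed obstacle are off, and the actual paper's argument is both simpler and not what you describe.

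The paper does not split $\widetilde\beta$ at a time $T$ and does not use a $\beta\leftrightarrow\widetilde\beta$ swap. It splits the \emph{short} trajectory $\beta$ into geometrically spaced time intervals: $t_1=0$, $t_2=\|z\|^2$, and $t_{k+1}=2t_k$ for $k\ge 2$, with $K$ chosen so that $2^{K-1}\ge(\log t)^\alpha$ and hence $t\le t_{K+1}$; the hypothesis $\|z\|^2\ge t/(\log t)^\alpha$ guarantees $K=O(\log\log t)$. Then it bounds $\bP_{0,z}\bigl(W_{1/2}[t_k,t_{k+1}]\cap\widetilde W_{1/2}[0,\infty)\ne\emptyset\bigr)$ by conditioning on $\beta_{t_k}$, applying Lemma~\ref{lem.hit.1} with the shifted starting point $z-\beta_{t_k}$ and time horizon $t_{k+1}-t_k=t_k$, and taking expectation with the elementary estimate $\E{\,1/\|\beta_{t_k}-z\|^2}\le C/t_k$. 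The interval length $t_k$ and the typical squared distance $\|z-\beta_{t_k}\|^2\approx t_k$ match, so the Lemma~\ref{lem.hit.1} bound $\frac{t_k}{\|z-\beta_{t_k}\|^2}\cdot\frac{\log\log t_k}{\log t_k}$ has expectation $O(\frac{\log\log t}{\log t})$. Summing over $K=O(\log\log t)$ intervals gives the claimed $O(\frac{(\log\log t)^2}{\log t})$. No swap, no potential-theoretic argument, and the two $\log\log t$ factors come from (i) the $\log\log t$ already present in Lemma~\ref{lem.hit.1} and (ii) the number of intervals — not from ``two separate applications of Lemma~\ref{lem.hit.1}'' in the sense you describe.

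Your iteration, if carried out carefully with time scales $T_j=t/2^j$ run until $T_j\approx\|z\|^2$, would also work; but your write-up drifts: you speak of dyadic \emph{spatial} scales $R_k=2^k\sqrt t$ (growing, when $\|z\|\le\sqrt t$ means you would need scales between $\|z\|$ and $\sqrt t$), of ``sphere crossings'' via \eqref{hit.ball}, and of an obstacle that is not actually there. The regime where $\widetilde\beta$ stays near $z$ is precisely the base case of the recursion: once the short horizon $T_j$ has shrunk to order $\|z\|^2$, Lemma~\ref{lem.hit.1} applied directly already gives $O(\frac{\log\log t}{\log t})$, and the iteration terminates. There is no need to reach for Lemma~\ref{cond.hit}, the capacity upper bound, or a truncation with a carefully tuned radius: that machinery is what the \emph{proof of Lemma~\ref{lem.hit.1}} uses internally, and once you are using Lemma~\ref{lem.hit.1} as a black box you should not have to re-open it. The final paragraph therefore flags a non-obstacle and proposes an unnecessary and unexecuted fix, which is the genuine gap in the proposal.
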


\noindent{\bf Proof of Lemma~\ref{lem.hit.1}.}
Let $r:= \sqrt{t/\log t}$. Assume that $\|z\|>r$, otherwise there is nothing to prove. 
Using \eqref{hit.ball}, we see that estimating~\reff{hitsaus} amounts to bounding the term
$$
\bP_{0,z}\left(W_{1/2}[0,t]\cap\widetilde W_{1/2}[0,\infty)\not=\emptyset ,\, W_1[0,t]\cap \mathcal B(z,r) =\emptyset\right).
$$ 
Using now Proposition \ref{prop-ULD}, we see that it suffices to bound the term 
$$
\prstart{W_{1/2}[0,t]\cap\widetilde W_{1/2}[0,\infty)\not=\emptyset ,\, d(z,W_1[0,t])\ge r,\, \cp(W_1[0,t]) \le 8\pi^2 \frac t{\log t}}{0,z}.$$
By first conditioning on
$W_1[0,t]$, and then applying Lemma~\ref{cond.hit},
 we deduce that the latter display is bounded, up to a constant factor, by 
$$\mathbb  E\left[\frac{\mathbf 1(d(z,W_1[0,t]) \ge r)}{d(z,W_1[0,t])^2}\right]\cdot \frac{t}{\log t}.$$
Furthermore, on the event $\{d(z,W_1[0,t])\ge r\}$, for $t$ sufficiently large we have 
$$
\frac{1}{2} d(z,\beta[0,t])\le d(z,\beta[0,t])-1\le d(z,W_1[0,t])\le d(z,\beta[0,t]),
$$
with $\beta[0,t]$ the trace of $\beta$ on the time interval $[0,t]$. 
Now by using again \eqref{hit.ball} and the bound $\|z\|\le \sqrt t (\log t)^\alpha$, we get for some constant $C$ independent of $z$,
\begin{eqnarray*}
\E{\frac{\mathbf 1(d(z,\beta[0,t]) \ge r)}{d(z,\beta[0,t])^2}} &=& 2\int_{1/\|z\|}^{1/r} u\cdot  \bP\left(d(z,\beta[0,t])\le 1/u\right) \, du\\
&\le& C \frac{\log (\|z\|/r)}{ \|z\|^2} \le 
C(\alpha+\frac{1}{2})\frac{\log \log t}{ \|z\|^2},
\end{eqnarray*}
which concludes the proof. 
\qed

\noindent{\bf Proof of Lemma~\ref{lem.hit.2}.}
Set $t_1=0,\ t_2=\|z\|^2$ and for $k\ge 3$, denote $t_k=2t_{k-1}$. Let $K$ be the smallest integer such that $2^{K-1}\ge (\log t)^\alpha$. In particular $t\le 2^{K-1} \|z\|^2 = t_{K+1}$ by hypothesis. Then,
$$
\bP_{0,z}\left(W_{1/2}[0,t]\cap\widetilde W_{1/2}[0,\infty)\not=\emptyset\right)
 \ \le \ \sum_{k=1}^K \bP_{0,z}\left(W_{1/2}[t_k,t_{k+1}]\cap\widetilde W_{1/2}[0,\infty)\not=\emptyset \right).
 $$
We now bound each term of the sum on the right hand side. 
The first one (corresponding to $k=1$) is bounded using directly Lemma \ref{lem.hit.1}: for some positive constant $C$, 
$$
\bP_{0,z}\left(W_{1/2}[0,\|z\|^2]\cap\widetilde W_{1/2}[0,\infty)\not=\emptyset\right) \ \le\ C \cdot \frac{\log \log t}{\log t}.
$$
Now for the other terms, we first observe that for some 
positive constants $\kappa$, $C$, and $C'$, 
\begin{eqnarray}
\label{loc.martingale}
 \E{\frac{1}{\|\beta_{t_k}-z\|^2}} \ \le C\ \frac{1}{t_k^2}\cdot  \int \frac{1}{\|z-x\|^2} e^{-\kappa\cdot \|x\|^2/t_k}\, dx \ \le \  \frac{C'}{t_k}.
\end{eqnarray} 
Then, we obtain, for some positive constant $C$, 
\begin{eqnarray*}
\bP_{0,z}\left(W_{1/2}[t_k,t_{k+1}]\cap\widetilde W_{1/2}[0,\infty)\not=\emptyset \right) &\le & 
\E{\bP_{0,z-\beta_{t_k}}\left(W_{1/2}[0,t_{k+1}-t_k]\cap\widetilde W_{1/2}[0,\infty)\not=\emptyset\right)}\\
&\le & C\, \E{\frac{1}{\|\beta_{t_k}-z\|^2}}\cdot \frac{t_{k} \cdot \log \log t}{\log t}\ \le\  C \cdot \frac{\log \log t}{\log t} ,
\end{eqnarray*}
using again Lemma \ref{lem.hit.1} and \eqref{confinment1} at the second line and \eqref{loc.martingale} for the third inequality.  
We conclude the proof recalling that $K$ is of order $\log \log t$.
\qed

We now give the proof of Proposition~\ref{lem.hit.3}. 

\begin{proof}[\bf Proof of Proposition~\ref{lem.hit.3}]
Define the stopping times
$$
\sigma:= \inf\{s\ : \ W_{1}[0,s]\cap \gamma[0,\infty)\neq \emptyset\},
\quad\text{and}\quad
\widetilde \sigma:= \inf\{s\ : \ W_{1}[0,s]\cap \widetilde \gamma[0,\infty)\neq \emptyset\}.
$$
Note that 
$$
\bP_{0,z,z'}\left(W_1[0,t] \cap \gamma[0,\infty)\not=\emptyset ,\  W_1[0,t]
 \cap\widetilde \gamma[0,\infty)\not=\emptyset\right) = \bP_{0,z,z'}(\sigma<\widetilde \sigma\le t) + \bP_{0,z,z'}(\widetilde \sigma< \sigma\le t).
$$
By symmetry, we only need to deal with $ \bP_{0,z,z'}(\sigma<\widetilde \sigma\le t)$.
Now conditionally on $\gamma$, $\sigma$ is a stopping time for $\beta$. In particular, conditionally on $\sigma$ and $\beta_\sigma$, $W_1[\sigma, t]$ is equal in law to $\beta_\sigma +  W_1'[0,t-\sigma]$, with $W'$ a Wiener sausage, independent of everything else. 
Therefore 
\begin{eqnarray*}
\bP_{0,z,z'}(\sigma<\widetilde \sigma\le t) &\le &  \mathbb E_{0,z}\left[\mathbf{1}(\sigma \le t)\,  \bP_{0,z,z'}(\sigma < \widetilde \sigma \le t\mid \sigma,\, \gamma, \beta_\sigma)\right]\\
&\le &  \mathbb E_{0,z}\left[\mathbf{1}(\sigma \le t)\,  
\bP_{0,z'-\beta_\sigma}(W'_1[0,t-\sigma]\cap \til{\gamma}[0,\infty)\neq \emptyset\mid \sigma)\right]\\
&\le &  \mathbb E_{0,z}\left[\mathbf{1}(\sigma \le t)\,  
\bP_{0,z'-\beta_\sigma}(W'_1[0,t]\cap \til{\gamma}[0,\infty)\neq \emptyset)\right].
\end{eqnarray*}
To simplify notation, write $D=\|z'-\beta_\sigma\|$. Note that one can assume $D> \sqrt t \cdot (\log t)^{- 3\alpha-1}$, since by using \eqref{hit.ball} and the hypothesis on $\|z'\|$ we have 
$$\bP\left(\sigma\leq t, D\le  \sqrt t \cdot (\log t)^{- 3\alpha-1}\right)\  \le\  \frac{t}{\|z'\|^2\cdot (\log t)^{6\alpha + 2}}\ \le \ (\log t)^{-4\alpha - 2},$$
and the right hand side in \eqref{hit.zz'} is always larger than $(\log t)^{-4\alpha - 2}$ by the hypothesis on $z$ and $z'$. 
Then by applying Proposition~\ref{prop.hit} 
we get for positive constants $C_1$ and $C_2$, 
\[
\begin{split}
&\mathbb E_{0,z}
\ \left[\mathbf{1}\left(\sigma \le t,\, D>\frac{\sqrt t}{(\log t)^{3\alpha + 1}}\right)\, 
 \bP_{0,z'-\beta_\sigma}\left(W'_1[0,t]\cap \til{\gamma}[0,\infty)\neq \emptyset\right)\right] \\
&  \le C_1\  \mathbb E_{0,z} \left[\mathbf{1}(\sigma \le t)\, \left(1\wedge \frac{t}{D^2}\right)\right] \cdot  \frac{(\log \log t)^2}{\log t}\\
 & \le C_1\ \bP_{0,z} \left(W_1[0,t]\cap \til{\gamma}[0,\infty)\neq \emptyset\ \right)\cdot \left(1\wedge \frac{16 t}{\|z'\|^2}\right) \cdot  \frac{(\log  \log t)^2}{\log t}+C_1 \bP_{0,z} \left(\sigma \le t, \, D\le \frac{\|z' \|}{4}\right) \cdot  \frac{(\log \log t)^2}{\log t}\\
& \le\ C_2 \frac{(\log \log t)^4}{(\log t)^2}\cdot\left(1\wedge \frac{t}{\|z\|^2}\right) \cdot\left(1\wedge \frac{t}{\|z'\|^2}\right) +
C_1 \bP_{0,z} \left(\sigma \le t, \, D\le \frac{\|z'\|}{4}\right) \cdot  \frac{(\log \log t)^2}{\log t}.
\end{split}
\]
Now define 
\begin{equation*}
\tau_{z,z'}:= \left\{ 
\begin{array}{ll} 
\inf\{s\ :\ \beta_s\in \mathcal B(z',\|z'\|/4)\} & \text{if }\|z-z'\|>\|z'\|/2\\
\inf\{s\ :\ \beta_s \in \mathcal B(z',3\|z'\|/4)\} & \text{if } \|z-z'\|\le \|z'\|/2.
\end{array}
\right.  
\end{equation*}
Note that by construction $\|z-\beta_{\tau_{z,z'}}\|\ge \max(\|z-z'\|,\|z'\|)/4$, and that on the event $\{D\le \|z'\|/4\}$, one has $\sigma \geq  \tau_{z,z'}$. 
Therefore by conditioning first on $\tau_{z,z'}$ and the position of $\beta$ at this time, and then by using Proposition~\ref{prop.hit}, 
we obtain  for some positive constants $\kappa$, $C_3$ and $C_4$, 
\begin{eqnarray*}
\bP_{0,z} \left(\sigma \le t, \, D\le \|z'\|/4\right) &\le & \bP_{0,z}\left(\tau_{z,z'}\le \sigma \le t\right) \\ 
& \le& C_3\left(1\wedge \frac{t}{\|z-z'\|^2}\right) \cdot  \frac{(\log \log t)^2}{\log t} \cdot \bP(\tau_{z,z'} \le t)\\
&\le&  C_3\left(1\wedge \frac{t}{\|z-z'\|^2}\right) \cdot  \frac{(\log \log t)^2}{\log t}\cdot e^{-\kappa \cdot \|z'\|^2/t} \\
& \le& C_4 \left(1\wedge \frac{t}{\|z\|^2}\right) \left(1\wedge \frac{t}{\|z'\|^2}\right)\cdot  \frac{(\log \log t)^2}{\log t},
\end{eqnarray*}
where we used \eqref{confinment1} in 
the third line and considering two cases to obtain
the last inequality: $\|z'\|\geq \|z\|/2$, in which case we bound the exponential term by the product and $\|z'\|<\|z\|/2$, in which case using the triangle inequality gives $\|z-z'\|\geq \|z\|/2$. This concludes the proof. 
\end{proof}

\subsection{A second moment estimate}\label{sec:secondmoment}
Here we apply the results of the previous subsection to bound the second moment of the cross-term $\chi$ 
from the decomposition \eqref{decomposition.cap}. 
Recall that for any compact sets $A$ and $B$ with $A\cup B\subset \B(0,r)$,
we have defined
\[
\chi_r(A,B) = 2\pi^2 \, r^2 \cdot \frac 1{|\partial \B(0,r)|}
\int_{\partial \B(0,r)} (\bP_z[H_A<H_B<\infty] + \bP_z[H_B<H_A<\infty]) \, dz,
\]

\begin{proposition}\label{prop.chi.Wiener}
Let $\beta$ and $\widetilde \beta$ be two independent Brownian motions 
and let $W$ and $\widetilde W$ be their corresponding Wiener sausages. 
Then, there is a constant $C$ such that for any $t>0$,
with $r(t) = \sqrt t \cdot \log t$,
\begin{equation}\label{key-cross}
\E{\chi^2_{r(t)}(W_1[0,t],\widetilde W_1[0,t])\,
\mathbf{1}\big(W_1[0,t] \cup \widetilde W_1[0,t]\subset\B(0,r(t))\big)}\ 
\le \ C\, \frac{t^2(\log\log t)^8}{(\log t)^4 }.
\end{equation}
\end{proposition}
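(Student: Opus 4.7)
The plan is to expand $\chi_{r(t)}^2$ into a double integral, interpret each integrand as a joint probability for independent Brownian motions, decompose using the strong Markov property, and then apply Proposition~\ref{lem.hit.3} twice: once to bound the probability that two auxiliary motions hit $W_1[0,t]$, and once more (with the roles of $\beta$ and $\widetilde\beta$ swapped) to bound the probability that their continuations hit $\widetilde W_1[0,t]$. After squaring and using the $A\leftrightarrow B$ symmetry, it suffices to show that for all $z,z'\in\partial\B(0,r(t))$, with $A=W_1[0,t]$ and $B=\widetilde W_1[0,t]$,
\begin{equation*}
T(z,z') := \mathbb{E}\!\left[\prstart{H_A<H_B<\infty}{z}\,\prstart{H_A<H_B<\infty}{z'}\,\mathbf{1}(A\cup B\subset\B(0,r(t)))\right] \;\le\; C\,\frac{(\log\log t)^8}{(\log t)^8}.
\end{equation*}
Multiplying by the prefactor $r(t)^4=t^2(\log t)^4$ then yields~\eqref{key-cross}.

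To estimate $T(z,z')$, I would introduce auxiliary Brownian motions $\gamma,\widetilde\gamma$ starting from $z$ and $z'$, independent of each other and of $(\beta,\widetilde\beta)$, so that $T(z,z')$ equals the probability that $\gamma$ hits $A$ at some $p_1$ and subsequently hits $B$, while $\widetilde\gamma$ hits $A$ at some $p_2$ and subsequently hits $B$. Applying the strong Markov property to $\gamma$ at $H_A^\gamma$ and to $\widetilde\gamma$ at $H_A^{\widetilde\gamma}$, and using that $\widetilde\beta$ is independent of $(\beta,\gamma,\widetilde\gamma)$, yields
\begin{equation*}
T(z,z') \;\le\; \mathbb{E}\!\left[\mathbf{1}\bigl(H_A^\gamma<\infty,\,H_A^{\widetilde\gamma}<\infty\bigr)\,Q(p_1,p_2)\right],
\end{equation*}
where $Q(p_1,p_2)$ is the probability that two independent Brownian motions starting from $p_1,p_2$ both meet the sausage $\widetilde W_1[0,t]$ of $\widetilde\beta$ started from $0$ --- exactly the quantity controlled by Proposition~\ref{lem.hit.3} with $\widetilde\beta$ now in the role of the central motion.

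Fix a large $\alpha>0$ (say $\alpha=10$). On the \emph{good} event $\{\|p_1\|,\|p_2\|\ge\sqrt t(\log t)^{-\alpha}\}$ (the matching upper bound $\|p_i\|\le r(t)=\sqrt t\log t$ being automatic on our indicator event), Proposition~\ref{lem.hit.3} bounds $Q(p_1,p_2)$ by $C(\log\log t)^4/(\log t)^2$, simply using $1\wedge t/\|p_i\|^2\le 1$. Combining this with a second application of Proposition~\ref{lem.hit.3} to the joint hitting probability $\bP_{0,z,z'}(H_A^\gamma<\infty,\,H_A^{\widetilde\gamma}<\infty)$ (with $\beta$ as the central motion, and $\|z\|=\|z'\|=r(t)$ giving $1\wedge t/\|z\|^2=1/(\log t)^2$) shows that the good part of $T(z,z')$ is at most
\begin{equation*}
C\,\frac{(\log\log t)^4}{(\log t)^2}\cdot \frac{(\log\log t)^4}{(\log t)^2}\cdot\frac{1}{(\log t)^4}\;=\;C\,\frac{(\log\log t)^8}{(\log t)^8},
\end{equation*}
which matches the target.

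The main obstacle --- the only step requiring genuine care --- is the \emph{bad} case where one hit point, say $p_1$, falls inside $\B(0,\sqrt t(\log t)^{-\alpha})$, putting it outside the admissible range of Proposition~\ref{lem.hit.3}. Here I would simply drop the hitting-of-$B$ requirement and bound the bad part of $T(z,z')$ by $\mathbb{E}[\mathbf{1}(\gamma\text{ enters }\B(0,\sqrt t(\log t)^{-\alpha}))\,\mathbf{1}(H_A^{\widetilde\gamma}<\infty)]$. Using the conditional independence of $\gamma$ and $\widetilde\gamma$ given $\beta$, together with~\eqref{hit.ball} (which gives that the hitting probability of $\B(0,\rho)$ from $z$ equals $\rho^2/\|z\|^2$ and does not depend on $\beta$), and Proposition~\ref{prop.hit} to bound $\bP(\widetilde\gamma\cap W_1[0,t]\neq\emptyset)\le C(\log\log t)^2/(\log t)^3$, this bad contribution is at most $C(\log t)^{-2\alpha-2}\cdot(\log\log t)^2/(\log t)^3$. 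Choosing $\alpha$ sufficiently large makes this an arbitrarily negative power of $\log t$, hence negligible compared to the good part, completing the proof.
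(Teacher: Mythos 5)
Your proposal is correct and follows essentially the same route as the paper: expand $\chi_r^2$ into a double integral over $\partial\B(0,r)\times\partial\B(0,r)$, reduce the mixed-order terms to same-order ones via $ab\le a^2+b^2$ (which is exactly your ``$A\leftrightarrow B$ symmetry'' step since $A$ and $B$ are exchangeable), apply the strong Markov property at the first hitting of $A$ by each of $\gamma$ and $\widetilde\gamma$, and then use Proposition~\ref{lem.hit.3} twice --- once with $\widetilde\beta$ as the central motion to bound the continuations hitting $B$, and once with $\beta$ as the central motion to bound the joint hitting of $A$ from $z,z'$. Multiplying by $r(t)^4=t^2(\log t)^4$ then converts $(\log\log t)^8/(\log t)^8$ into the claimed $t^2(\log\log t)^8/(\log t)^4$.

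The one place you proceed differently --- and it is a genuine, correct alternative --- is the treatment of the exceptional region where the hit points are too close to the origin for Proposition~\ref{lem.hit.3} to apply. The paper inserts the event that both $\gamma$ and $\widetilde\gamma$ never enter $\B(0,\sqrt t/(\log t)^3)$, paying an additive error of order $(\log t)^{-8}$ via~\eqref{hit.ball}, and this automatically keeps $\|p_1\|,\|p_2\|$ in range. You instead keep only the constraint on the hit points themselves and, on the bad event, drop the ``hit-$B$'' requirement, factoring the bound into a small-ball hitting probability (via~\eqref{hit.ball}, giving $(\log t)^{-2\alpha-2}$) times a single-sausage intersection probability (via Proposition~\ref{prop.hit}, giving $(\log\log t)^2/(\log t)^3$). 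Both methods cost a negligible term, and your version has the mild advantage of making the tuning parameter $\alpha$ explicit; the paper's version is a bit cleaner because the error term is purely additive and does not require another appeal to Proposition~\ref{prop.hit}. Either way the argument is sound.
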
 

\begin{proof}[\bf Proof]  
For any compact sets $A$ and $B$ and any $r$ 
such that $A\cup B\subset \B(0,r)$,
we bound $\chi_r(A,B)^2$ as follows. 
For some constant $C>0$,
\begin{eqnarray}
\label{chir2}
&&\nonumber \chi_{r}(A,B)^2 \ \le \ C\, \frac {r^4}
{|\partial \B(0,r)|^2} \int_{\partial \B(0,r)\times \partial \B(0,r)} 
\left( \bP_{z,z'}(H_A<H_B<\infty,\, \widetilde H_A<\widetilde H_B<\infty)\right.\\\nonumber &+&\left. \bP_{z,z'}(H_B<H_A<\infty,\, \widetilde H_B<\widetilde H_A<\infty)
  + \bP_{z,z'}(H_A<H_B<\infty,\, \widetilde H_B<\widetilde H_A<\infty)\right.\\ &+&\left. \bP_{z,z'}(H_B<H_A<\infty,\, \widetilde H_A<\widetilde H_B<\infty) \right) \, dz\, dz',
\end{eqnarray}
where $H$ and $\widetilde H$ refer to the hitting times of two independent Brownian motions $\gamma$ and $\widetilde \gamma$ starting respectively from $z$ and $z'$ in $\partial \B(0,r)$. 
To simplify notation, let $A=W_1[0,t]$, $B=\widetilde W_1[0,t]$, and $r=r(t)$.
By using \eqref{hit.ball}, we obtain
\begin{eqnarray*}
&& \bP_{z,z'}(H_A<H_B<\infty,\, \widetilde H_A<\widetilde H_B<\infty) \\
&=& \bP_{z,z'}\left(H_A<H_B<\infty, \widetilde H_A<\widetilde H_B<\infty,  H_{\mathcal B(0,\frac{\sqrt t}{(\log t)^3})}=\infty,  \widetilde H_{\mathcal B(0,\frac{\sqrt t}{(\log t)^3})}=\infty\right)+\mathcal O\left(\frac{1}{(\log t)^8}\right).
\end{eqnarray*}
Now, to bound the probability on the right-hand side,
we use the Markov property at times $H_A$ and $\widetilde H_A$ for $\gamma$ and $\widetilde \gamma$ 
respectively. We then have using Lemma \ref{lem.hit.3} twice, for some
constant $C$
\begin{eqnarray}
\label{term1}
\nonumber \bP_{z,z'}(H_A<H_B<\infty,\, \widetilde H_A<\widetilde H_B<\infty)  &
\le& C\, \bP_{z,z'}\left(H_A<\infty,\, \widetilde H_A<\infty\right)\cdot \frac{(\log \log t)^4}{(\log t)^2}\, +\mathcal O\big( \, (\log t)^{-8} \big) \\
\nonumber &\le
& C\left(1\wedge \frac{t}{\|z'\|^2}\right)\cdot\left(1\wedge \frac{t}{\|z\|^2}\right) 
\frac{(\log \log t)^8}{(\log t)^4}+\mathcal O\big( \, (\log t)^{-8} \big)\\
&=&\mathcal O\left( \frac{(\log \log t)^8}{(\log t)^8}\right).
\end{eqnarray}
By symmetry, we get as well 
\begin{equation}
\label{term2}
\bP_{z,z'}(H_B<H_A<\infty,\, \widetilde H_B<\widetilde H_A<\infty)=
\mathcal O\left( \frac{(\log \log t)^8}{(\log t)^8}\right).
\end{equation}
Now to bound the last two terms in \eqref{chir2}, we can first condition on $A=W_1[0,t]$ and $B=\widetilde W_1[0,t]$, and then using the inequality $ab\le a^2 + b^2$ for $a,b>0$, together with \eqref{term1} and \eqref{term2}, this gives   
\begin{equation}\label{term3}
\begin{split}
\bP_{z,z'}(H_A<H_B<\infty,\, &\widetilde H_B<\widetilde H_A<\infty)\le \ \bP_{z,z}(H_A<H_B<\infty,\, \widetilde H_A<\widetilde H_B<\infty) \\
& \ \ \ + \bP_{z',z'}(H_B<H_A<\infty,\, \widetilde H_B<\widetilde H_A<\infty) 
=\mathcal O\left( \frac{(\log \log t)^8}{(\log t)^8}\right).
\end{split}
\end{equation}
By symmetry it also gives  
\begin{equation}
\label{term4}
\bP_{z,z'}\left(H_{\widetilde W_1[0,t]}<H_{W_1[0,t]}<\infty,\, \widetilde H_{W_1[0,t]}<\widetilde H_{\widetilde W_1[0,t]}<\infty\right)=
\mathcal O\left( \frac{(\log \log t)^8}{(\log t)^8}\right).
\end{equation}
Then the proof follows from \eqref{chir2}, \eqref{term1}, \eqref{term2}, \eqref{term3}, and \eqref{term4}. 
\end{proof}

%%%%%%%%%%%%%%%%%%%%%%%%%%%%%%%%%%%%%%%%%%%%%%%%%%%%%%%%%%%%%%%%%%%

\section{Proof of Theorem \ref{theo.cap.wiener}}\label{sec:finalproof}
The proof of the strong law of large number has four elementary
steps: (i) the representation formula \eqref{cap.formula} of the capacity of the sausage in terms
of a probability of intersection of two sausages, (ii) a decomposition
formula as we divide the time period into two equal periods, and iterate
the latter steps enough times (iii) an estimate of the variance of
dominant terms of the decomposition, 
(iv) Borel-Cantelli's Lemma allows us to
conclude along a subsequence, and the monotony of the capacity which
yields the asymptotics along all sequence.

Since all the technicalities have been dealt before, we present
a streamlined proof. We only give the proof when the radius of the sausage is equal to one, as the same proof applies for any radius.   

\paragraph{The decomposition.} We let $r=r(t)=\sqrt t \cdot \log t$. 
When dealing with
the random set $W_1[0,t]$, \eqref{decomposition.cap}
holds only on the event $\{W_1[0,t]\subset \B(0,r)\}$, and yields
\begin{align*}
\cc{W_1[0,t]}=\cc{W_1\left[0,\frac{t}{2}\right]}+\cc{W_1\left[\frac{t}{2},t\right]}
-\chi_r &\left(W_1\left[0,\frac{t}{2}\right],W_1\left[\frac{t}{2},t\right]\right)
\\
&-\epsilon_r\left(W_1\left[0,\frac{t}{2}\right],W_1\left[\frac{t}{2},t\right]\right).
\end{align*}
What is crucial here is that $\cp(W_1[0,\frac{t}{2}])$ and
$\cp(W_1[\frac{t}{2},t])$ are independent.
We iterate the previous decomposition $L$ times and center it,
to obtain (with the notation $\overline X=X-\mathbb E[X]$), on the event
$\{W_1[0,t]\subset \B(0,r)\}$,
\begin{equation}\label{legall-2}
\overline{\cp(W_1[0,t])}=\overline{S(t,L)}-\overline{\Xi(t,L,r)}
-\overline{\Upsilon(t,L,r)},
\end{equation}
where $S(t,L)$ is a sum of $2^L$ i.i.d.\ terms distributed
as $\cp(W_1[0,t/2^L])$, where
\begin{equation}\label{legall-3}
\Xi(t,L,r)=\sum_{\ell=1}^L\sum_{i=1}^{2^{\ell-1}} 
\chi_r\Big(W_1\left[\frac{2i-2}{2^{\ell}}t,\frac{2i-1}{2^{\ell}}t\right],
W_1\left[\frac{2i-1}{2^{\ell}}t,\frac{2i}{2^{\ell}}t\right] \Big),
\end{equation}
and
\begin{equation}\label{legall-4}
\Upsilon(t,L,r)=\sum_{\ell=1}^L\sum_{i=1}^{2^{\ell-1}} 
\epsilon_r\Big(W_1\left[\frac{2i-2}{2^{\ell}}t,\frac{2i-1}{2^{\ell}}t\right],
W_1\left[\frac{2i-1}{2^{\ell}}t,\frac{2i}{2^{\ell}}t\right]\Big).
\end{equation}
In both \reff{legall-3} and \reff{legall-4}, the second sum (with $\ell$ fixed)
is made of independent terms.
\paragraph{Variance Estimates.}
We choose $L$ such that $(\log t)^4\le 2^L\le 2(\log t)^4$,
so that $L$ is of order $\log \log t$. 
Let now  $\epsilon>0$ be gixed. By \eqref{confinment1} and Chebychev's inequality, 
for $t$ large enough, 
\begin{eqnarray}\label{cheb-1}
\nonumber \bP\big( |\overline{\cp(W_1[0,t])}| >  \epsilon \frac{t}{\log t}\big)
&\le& \bP\left( W_1[0,t]\not\subset \B(0,r)\right)+
\bP\left( |\overline{\Upsilon(t,L,r)}|>\frac{\epsilon}{2} \frac{t}{\log t}
\right)\\
\nonumber && \quad+\,  \bP\left( |\overline{S(t,L)}-\overline{\Xi(t,L,r)}| 
>\frac{\epsilon}{2} \frac{t}{\log t}\right)\\
\nonumber &\le & e^{-c(\log t)^2}+\bP\left( |\overline{\Upsilon(t,L,r)}|
>\frac{\epsilon}{2} \frac{t}{\log t}\right)\\
&&\quad +\, 8(\log t)^2 
\frac{\var(S(t,L))+\var(\Xi(t,L,r))}{\epsilon^2 t^2}. 
\end{eqnarray}
Then we use the triangle inequality for the $L^2$-norm and the Cauchy-Schwarz 
inequality, as well as Proposition \ref{prop.chi.Wiener}, to obtain  
\begin{equation}\label{chi.ell}
\var(\Xi(t,L,r))\le C L \cdot \sum_{\ell= 1}^L 2^{\ell-1} 
\frac{t^2 \cdot (\log \log t)^8}{2^{2\ell}(\log t)^{4}}
\le Ct^2\cdot \frac{(\log \log t)^9}{(\log t)^{4}}.
\end{equation}  
To deal with $\var(S(t,L))$, we can use Proposition \ref{prop-ULD} which gives
a constant $C>0$, such that for any $t\ge 2$
$$\mathbb E[\cp(W_1[0,t])^2]\ \le\  C\, \frac{t^2}{(\log t)^2},$$ 
Thus there exists a  
constant $C'>0$, such that for $t$ large enough, 
\begin{equation}\label{cheb-2}
\var(S_L(t))\ \le\ C' \, 2^L \frac{(t/2^L)^2}{\log^2(t/2^L)}\ \le\ 2C'\, 
\frac{t^2}{(\log t)^6}.
\end{equation}
The term $\Upsilon$ is controlled by invoking 
Lemma \ref{lem.epsilon}, and using that $\varepsilon_r(A,B)\le \cp(A\cap B)$. We deduce 
\begin{equation}\label{cheb-2bis}
\var(\Upsilon(t,L,r)) \ = \ \mathcal O(2^{2L} (\log t)^2) \ 
=\ \mathcal O((\log t)^{10}),
\end{equation}
so that
\begin{equation}\label{cheb-3}
\bP\left( |\overline{\Upsilon(t,L,r)}|>\frac{\epsilon}{2} \frac{t}{\log t}
\right) =\ \mathcal O\left( \frac{(\log t)^{12}}{t^2}\right).
\end{equation}
Plugging \eqref{chi.ell} \reff{cheb-2} and \eqref{cheb-3} into \reff{cheb-1}, 
we obtain
\begin{equation*}
\bP\left( \left|\cp(W_1[0,t])-\E{\cp(W_1[0,t])}\right| \ge \varepsilon\frac{t}{\log t}\right)\, 
=\, \mathcal O\left(\frac{(\log \log t)^9}{(\log t)^2}\right).
\end{equation*}

\paragraph{From Subsequences to SLLN.}
Consider the sequence $a_n = \exp(n^{3/4})$,
satisfying that $a_{n+1}-a_n$ 
goes to infinity but $a_{n+1}-a_n=o(a_n)$.
Since the previous bound holds for all $\varepsilon>0$, by using Borel-Cantelli's lemma and Proposition~\ref{prop-expected}, we deduce that a.s. 
\begin{equation}\label{asconv1}
\lim_{n\to \infty} \, \frac{\cp(W_1[0,a_n])}{\E{\cp(W_1[0,a_n])}} \ = \ 1.
\end{equation}
Let now $t>0$, and choose $n=n(t)>0$, so that $a_n\le t<a_{n+1}$. Using that the map $t\mapsto \cp(W_1[0,t])$ is a.s.\ nondecreasing (since for any sets $A\subset B$, one has $\cp(A)\le \cp(B)$), we can write 
\begin{equation}\label{asconv}
\frac{\cp(W_1[0,a_n])}{\E{\cp(W_1[0,a_{n+1}])}}\, \le \, \frac{\cp(W_1[0,t])}{\E{\cp(W_1[0,t])}}\, \le \, \frac{\cp(W_1[0,a_{n+1}])}{\E{\cp(W_1[0,a_n])}}.
\end{equation}
Moreover, applying Proposition~\ref{prop-expected} again gives 
$$
\mathbb E[\cp(W_1[a_n,a_{n+1}])]\ = \mathbb E[\cp(W_1[0,a_{n+1}-a_n])]=
\mathcal O\left(\ \frac{a_{n+1}-a_n}{\log (a_{n+1}-a_n)}\ \right)=
\ o\left(\frac{a_n}{\log a_n}\right).
$$
Then using that for any sets $A$ and $B$, one has 
$\cp(A)\le \cp(A\cup B)\le \cp(A)+\cp(B)$, we deduce that 
\[
\lim_{n\to\infty} \frac{\E{\cp(W_1[0,a_{n+1}])}}{\E{\cp(W_1[0,a_n])}}=1, 
\]
which, together with \eqref{asconv1} and \eqref{asconv}, proves the almost sure convergence.

The convergence in $L^p$ follows from the boundedness result proved in Section~\ref{Sec.updev}, see Remark \ref{rem.Lp}.   \hfill $\square$

\vspace{0.2cm}
Finally we note that the bound on the variance \eqref{ineq-var} follows from \reff{legall-2}, \reff{chi.ell}, \reff{cheb-2} and \reff{cheb-2bis}.

\bibliographystyle{abbrv}
\bibliography{biblio}

\end{document}